\newcommand{\R}{\mathbb{R}}
\newcommand{\Z}{\mathbb{Z}}
\newcommand{\bes}{\dot{B}_p}
\newcommand{\besfull}{B_p}
\newcommand{\WR}{L^{2,p}(\R^2)}
\newcommand{\WE}{L^{2,p}(\R^2)|_E}
\newcommand{\CZ}{\{Q_\nu\}_{\nu=1}^{K}}
\newtheorem{defn}{Definition}
\newtheorem{thm}{Theorem}
\newtheorem{lem}{Lemma}
\newtheorem{prop}{Proposition}
\newtheorem{rek}{Remark}
\newcommand\SLASH{\char`\~
}
\numberwithin{equation}{section}
\numberwithin{thm}{section}
\numberwithin{defn}{section}
\numberwithin{lem}{section}
\numberwithin{cor}{section}
\numberwithin{prop}{section}
\numberwithin{rek}{section}
\begin{document}
\begin{titlepage}

\title{A Bounded Linear Extension Operator for $L^{2,p}(\R^2)$}
\author{Arie Israel}
\date{}
\maketitle{}

\begin{abstract}
For a finite $E \subset \R^2$, $f:E \rightarrow \R$, and $p>2$, we produce a continuous $F:\R^2 \rightarrow \R$ depending linearly on $f$, taking the same values as $f$ on $E$, and with $\WR$ semi-norm minimal up to a factor $C=C(p)$. This solves the Whitney extension problem for the Sobolev space $\WR$. A standard method for solving extension problems is to find a collection of local extensions, each defined on a small square, which if chosen to be mutually consistent can be patched together to form a global extension defined on the entire plane. For Sobolev spaces the standard form of consistency is not applicable due to the (generically) non-local structure of the trace norm. In this paper, we define a new notion of consistency among local Sobolev extensions and apply it toward constructing a bounded linear extension operator. Our methods generalize to produce similar results for the $n$-dimensional case, and may be applicable toward understanding higher smoothness Sobolev extension problems.
\end{abstract}
\end{titlepage}
\section{Introduction}
\label{intro}
The study of extension problems in its first form began with Whitney's work dating back to 1934. Whitney was concerned with an intrinsic description of the restriction of a ``smooth" function $F$ defined on $\R^n$ to a closed subset $E \subset \R^n$. Of course, we have many useful definitions for ``smooth", and for each we find an interesting variation of the above question. For a modern description of the problem at hand, we must fix a Banach (resp. semi-normed complete vector) space $X(\R^n)$ consisting of smooth functions on $\R^n$. These include, e.g.,  $X=C^m,C^{m,\omega}$, and $W^{m,p}$; respectively, these are the spaces of continuously differentiable, differentiable with modulus of continuity $\omega$, and Sobolev functions. These spaces consist of functions $F: \R^n \rightarrow \R$ with finite norm - and in the case of $C^m(\R^n)$, continuous $m$'th derivatives - as below:
\begin{align*}
& \|F\|_{C^m(\R^n)} = \sup_{x \in \R^n} \max_{k \leq m} |\nabla^k F(x)| < \infty, \; \mbox{with} \;  \nabla^m F \; \mbox{continuous};\\
& \|F\|_{C^{m,\omega}(\R^n)} = \|F\|_{C^m(\R^n)} + \sup_{x,y \in \R^n} \frac{|\nabla^m F(x)- \nabla^m F(x)|}{\omega(|x-y|)} < \infty; \\
& \|F\|_{W^{m,p}(\R^n)} = \sum_{k \leq m} \left( \int_{\R^n} |\nabla^k F|^p dx \right)^{1/p} < \infty.
\end{align*}
We also denote the homogeneous versions of these spaces by $\dot{C}^m(\R^n), \dot{C}^{m,\omega}(\R^n)$, and $L^{m,p}(\R^n)$, which are defined via finiteness of the analogous semi-norm where all but the highest order derivatives are dropped. Here $\omega:\R_+ \rightarrow \R_+$ is a specified modulus of continuity, which is usually taken to satisfy some mild regularity assumptions. When $\omega(t)=t^\alpha$ for $0 < \alpha \leq 1$, we write $C^{m,\alpha}(\R^n)$, and $\dot{C}^{m,\alpha}(\R^n)$ for the respective spaces. For technical reasons it will be convenient to restrict attention to $L^{m,p}(\R^n)$ (or $W^{m,p}(\R^n)$) for the range $n<p<\infty$. We recall that for $p>n$, the Sobolev embedding theorem implies that $L^{m,p}(\R^n) \subset \dot{C}^{m-1,\alpha}(\R^n)$ ($\alpha=1-n/p$), with functions on the left space identified up to equality on a set of measure zero with functions on the right space. Thus we may assume that pointwise evaluation of derivatives through order $m-1$ is well-defined for any $F \in L^{m,p}(\R^n)$.

For $X$ any Banach (resp. semi-normed complete vector) space, and $E \subset \R^n$ arbitrary, we define the \textit{trace space} $X(\R^n)|_E := \{F|_E : F \in X(\R^n)\}$. This vector space carries the natural \textit{trace norm} (resp. \textit{semi-norm}): $\|f\|_{X(\R^n)|_E} = \inf \{\|F\|_{X(\R^n)} , F|_E = f\}$. Even for finite $E$, the finite-dimensional norm $\|f\|_{X(\R^n)|_E}$ is often non-trivial to calculate to within a factor of $C=C(X)$ (independent of the number of points in $E$). For a given $X$, and $E$ now arbitrary, we formulate the \textit{Whitney extension problem}.

\textbf{Question 1.} Given $E \subset \R^n$ arbitrary, and $f:E \rightarrow \R$, does $f$ \textit{extend} to $F \in X(\R^n)$ with $F|_E = f$? Can we take this extension to depend linearly on the data $f \in X(\R^n)|_E$?

For spaces that possess some form of compactness (e.g. the Arzel\`a-Ascoli theorem), such as $C^{m,\omega}(\R^n)$ and $L^{m,p}(\R^n)$, the quantitative finitary version of Question 1 is equivalent to Question 1 itself. Thus we focus on the finite version of the problem, which are Questions 2 and 3 below, but advise the reader that there are additional technical problems that arise in the solution of Question 1 when compactness fails, as it does for $C^m(\R^n)$; see \cite{F3,F4} for a solution of Question 1 for $C^m(\R^n)$.

\textbf{Question 2.} Given a \textit{finite} $E \subset \R^n$, and $f:E \rightarrow \R$, compute a real number $M(f) \geq 0$ so that $M(f) \approx \|f\|_{X(\R^n)|_E}$.

\textbf{Question 3.} For $E \subset \R^n$ finite, does there exist $T: X(\R^n)|_E = \{f:E \rightarrow \R\} \rightarrow X(\R^n)$, with $T(f)|_E = f$, and $\|T(f)\|_{X(\R^n)} \lesssim \|f\|_{X(\R^n)|_E}$ for all $f:E \rightarrow \R$?

For two non-negative real-valued quantities $A,B$, we write $A \lesssim B$ (resp. $A \approx B$) if and only if there exists a constant $C=C(X)$ (independent of $A$ and $B$) so that $A \leq CB$ (resp. $\frac{1}{C} B \leq A \leq CB$). In Question 2, the word ``compute" may mean, e.g., an explicit formula for $M(f)$ in terms of the function $f$. We call $T$ as in Question 3 a bounded linear extension operator for $X(\R^n)|_E$.

In this paper, we begin study of the structure of the semi-normed (finite-dimensional) vector space $L^{2,p}(\R^2)|_E$ for finite sets $E$, in particular solving Questions 2 and 3 with the following theorem.

\begin{thm}
\label{mainthm}
Fix $2<p<\infty$. Let $E \subset \R^2$, with cardinality $\#(E)=N$. Then there exists a bounded linear operator $T: \WE \rightarrow \WR$, and a non-negative real number $M(\cdot)$, which satisfy:
\begin{enumerate}
\item $Tf|_E = f$ for all $f: E \rightarrow \R$, and 
\item $\|T(f)\|_{L^{2,p}(\R^2)} \approx \|f\|_{L^{2,p}(\R^2)|_E} \approx M(f)$ for some constant $C = C(p)$.
\item There exist linear functionals $\{\lambda_i\}_{i=1}^{N_0}$, with $N_0 \lesssim N^2$, so that $M(f) = \left(\sum_{i=1}^{N_0} |\lambda_i(f)|^p\right)^{1/p}$.
\end{enumerate}
\end{thm}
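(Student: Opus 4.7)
The plan is to adapt the Calderón--Zygmund/Whitney patching machinery developed for $C^{m,\omega}$ extension problems to the Sobolev setting, replacing the classical pointwise jet-matching consistency with an integral consistency appropriate to $L^{2,p}$. The output will have the standard shape $Tf = \sum_\nu \theta_\nu F_\nu(f)$, where $\{\theta_\nu\}$ is a Whitney partition of unity subordinate to an adapted decomposition $\{Q_\nu\}_{\nu=1}^K$ of $\R^2$ and each $F_\nu(f)$ is a local extension depending linearly on $f$.

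First I would produce a Calderón--Zygmund decomposition $\{Q_\nu\}_{\nu=1}^K$ of a large enclosing square, adapted to $E$ and to $p$: each $Q_\nu$ should be ``OK'' in the sense that the restricted extension problem on a slight enlargement $\tilde Q_\nu$ has bounded local complexity, so that the local trace seminorm $\|f\|_{L^{2,p}(\tilde Q_\nu)|_{E \cap \tilde Q_\nu}}$ can be represented as an $\ell^p$-combination of a controlled number of linear functionals of $f$, and a bounded linear local extension $F_\nu : \WE \to L^{2,p}(\tilde Q_\nu)$ realizing this seminorm can be built --- for instance by induction on $\#(E \cap \tilde Q_\nu)$, or by fitting to a small ``keepers'' subset. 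A stopping-time argument should give $K \lesssim N$ with bounded overlap of the $\tilde Q_\nu$.

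The main conceptual step is the new consistency notion. For $C^{m,\omega}$, forcing the Taylor jets of $F_\nu$ and $F_\mu$ to agree at a common reference point makes their difference small in the relevant seminorm; in the $L^{2,p}$ setting no pointwise infinitesimal quantity controls the integral seminorm. I would instead demand that for adjacent $Q_\nu, Q_\mu$, the difference $F_\nu(f) - F_\mu(f)$ has $L^{2,p}$ seminorm on $\tilde Q_\nu \cap \tilde Q_\mu$ controlled by the same local functionals that already bound the individual $F_\nu(f)$. Once this holds, the standard partition-of-unity computation --- using $\sum_\nu \theta_\nu = 1$ to rewrite $\nabla^2(\sum_\nu \theta_\nu F_\nu(f))$ on an overlap as a sum of $(\nabla^2 \theta_\nu)(F_\nu(f) - F_\mu(f))$ plus harmless lower-order cross-terms --- reduces the global estimate to the $p$-th powers of the local functionals. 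Concatenating these lists across the $K$ squares gives the master list $\{\lambda_i\}_{i=1}^{N_0}$, with the crude accounting $N_0 \lesssim K \cdot \max_\nu \#(E \cap \tilde Q_\nu) \lesssim N^2$.

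The hard part will be the simultaneous construction of the $F_\nu$: fixing consistency with one neighbor may interfere with boundedness or with consistency with another neighbor, so one cannot choose each $F_\nu$ independently. A natural way to decouple this is to derive each $F_\nu$ from a carefully chosen common representative --- say, the values of $f$ on a ``keepers'' subset $S_\nu \subset E$ together with optimized local polynomial corrections --- and then argue that a correct choice of the $\{S_\nu\}$ makes adjacent $F_\nu$'s automatically consistent in the integral sense above. Verifying the existence of such a choice, that it can be made linearly in $f$, and that the resulting integral consistency is strong enough to drive the patching estimate while weak enough to be achievable, is where I expect the real work to lie; formulating the precise form of this consistency for $L^{2,p}$ is, I believe, the paper's central contribution.
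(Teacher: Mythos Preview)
Your high-level architecture is right --- CZ decomposition, local extensions, partition-of-unity patching, and you correctly flag consistency as the crux --- but the proposal has two genuine gaps where the paper supplies concrete mechanisms that you have not.

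First, your local step is underspecified. The paper's stopping rule declares a square $Q$ ``OK'' when $E\cap 3Q$ lies on the graph of a Besov function with small seminorm (scale-normalized). This is what makes the local problem tractable: a straightening diffeomorphism flattens $E\cap \tilde Q_\nu$ onto a line, and the local $L^{2,p}$ extension problem becomes a one-dimensional Besov trace problem, for which an explicit operator and an $\ell^p$ formula with $\lesssim (\#E_\nu)^2$ functionals are written down by hand. Your suggestion of ``induction on $\#(E\cap \tilde Q_\nu)$'' or ``fitting to a keepers subset'' does not give this, and without a concrete local formula you cannot produce the $\lambda_i$.

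Second, and more importantly, your consistency notion is not a mechanism but a wish: asking that $\|F_\nu - F_\mu\|_{L^{2,p}(\tilde Q_\nu\cap \tilde Q_\mu)}$ be controlled by the local functionals is exactly the conclusion you need, not a condition you can impose. The obstruction is that on a flat local piece the optimal $F_\nu$ is determined only up to an affine function vanishing on $E_\nu$, and nothing local fixes that freedom. The paper resolves this non-locally: it singles out \emph{Keystone} squares $Q^\#_\mu$ (CZ squares that are local minima of sidelength), shows every $Q_\nu$ is joined to some $Q^\#_{\mu(\nu)}$ by a path of CZ squares with exponentially decaying sidelengths, and then forces the $1$-jet of $F_\nu$ at a representative point $x_\nu$ to equal a jet $L^\#_{\mu(\nu)}$ chosen at the Keystone square. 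A telescoping/H\"older argument along the path (the ``global Sobolev inequality'') shows this rigid constraint costs only $\lesssim \|f\|$. The jets $L^\#_\mu$ themselves are then determined linearly from $f$ because near a Keystone square the set $E$ is provably \emph{not} flat enough for the affine ambiguity to survive: either four points of $E$ span two transverse directions, or the implicit function theorem pins down the jet. Your ``common representative $S_\nu$'' idea gestures at this, but the actual device --- propagating jets from distinguished far-away squares where the geometry kills the ambiguity --- is the paper's main idea, and nothing in your outline would lead you to it. Indeed, the paper notes (and proves elsewhere) that any bounded linear $T$ here must be genuinely non-local, so a purely neighbor-to-neighbor coordination scheme of the kind you sketch cannot work.
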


In \cite{FIL1}, a forthcoming paper joint with C. Fefferman and G.K. Luli, we show that these methods generalize to prove Theorem \ref{mainthm} for $L^{2,p}(\R^n)$. Unfortunately, for dimension $n \geq 3$ the current proof of Theorem \ref{mainthm} is non-constructive. We focus here on the two dimensional case for sake of clarity.

We now recall some of the history of extension theory, and outline a few important differences in our approach.

Question 1 was originally introduced by Whitney, who solved it for the space $C^m(\R)$ in \cite{W1} through the method of finite differences. Moreover, for $C^m(\R^n)$, Whitney proved the classical Whitney extension theorem (see \cite{St1,W2}), which solved the following variant of Question 1: Let $P_E=(P_x)_{x \in E}$ be a collection of $m$'th degree polynomials indexed by points of $E$ which satisfy $|\partial^\alpha(P_x-P_y)(x)| \cdot |x-y|^{m-|\alpha|} \leq M \cdot o(|x-y|)$ and $|\partial^\alpha P_x(x)| \leq M$ for some real number $M\geq0$, all multi-indices $\alpha$ with $0 \leq |\alpha| \leq m$, and all $x,y \in E$. Then there exists a function $F=T(P_E) \in C^m(\R^n)$ depending linearly on $P_E$ with $m$'th degree Taylor polynomials specified by $J^m_x F = P_x$ for each $x \in E$; moreover, the $C^m$-norm of $T(P_E)$ is comparable to the least possible value of $M$ as above. Thus, Whitney found that the corresponding extension problem for $C^m$ is easier once one adds additional constraints on the derivatives through order $m$ on the set $E$, and one can recover optimal interpolants through a linear operator. All future work on Question 1 has relied on the ability to ``guess" the full jets of the sought extension on $E$ through examination of the function values $f:E\rightarrow \R$.

In 1985, Y. Brudnyi and P. Shvartsman proposed the finiteness conjecture for $C^{m-1,\omega}(\R^n)|_E$ (see Theorem \ref{finprin} for a statement), which if proven would offer a solution to Questions 2 and 3 for this space; moreover, Brudnyi and Shvartsman proved their conjecture for the case $m=2$, thereby answering Questions 2 and 3 affirmatively for $C^{1,\omega}(\R^n)$ (see \cite{BS1,BS2,BS3,BS4,S2,S3,S4} for these and related results). Further progress came in \cite{F1,F2,F3,F4} where Fefferman proved the finiteness conjecture and introduced bounded linear extension operators for the class of spaces $C^m(\R^n)|_E$ and $C^{m,\omega}(\R^n)|_E$ for all $m,n \geq 1$, $E \subset \R^n$ closed, and $\omega$ satisfying some mild regularity conditions; thus Questions 2 and 3 have been resolved for these spaces as well.

This raises similar questions for the next classical space in line: the Sobolev spaces $L^{m,p}(\R^n)$ and $W^{m,p}(\R^n)$. In \cite{L1}, Luli proves that Whitney's one-dimensional construction also gives a bounded linear extension operator $T: L^{m,p}(\R)|_E \rightarrow L^{m,p}(\R)$ for finite $E \subset \R$. A recent interesting development came when Shvartsman showed that the classical Whitney extension operator (for $C^{m-1}(\R^n)$) also produces a function $F = T(P_E) \in L^{m,p}(\R^n)$, with $J^{m-1}_x(F)=P_x$ $\forall x \in E$, and $L^{m,p}(\R^n)$-norm minimal up to a constant $C(m,p,n)$. As a consequence, Shvartsman produces a bounded linear extension operator $T: L^{1,p}(\R^n)|_E \rightarrow L^{1,p}(\R^n)$, and solves Questions 2 and 3 for this space. We refer the reader to \cite{S1} for a statement of these and other results.

The inherent gap in difficulty between $L^{1,p}(\R^2)$ and $L^{2,p}(\R^2)$ comes from the fact that pointwise evaluation for the gradients of an $L^{1,p}(\R^2)$ function makes little sense, and so the information required to apply the classical Whitney extension operator (for $L^{1,p}(\R^2)$) are precisely the function values $(f(x))_{x \in E}$. On the other hand, for $L^{2,p}(\R^2)$ functions ($p>2$), pointwise evaluation of the gradient makes sense, and from the Sobolev embedding theorem we find a simple notion of consistency among gradients given by
$$|\nabla F(x) - \nabla F(y)| \lesssim |x-y|^{1-2/p} \|F\|_{L^{2,p}(\R^2)}$$
That is, when choosing an extension $F$ of $f$, we must ensure that its gradient vectors are consistent enough so that the Sobolev embedding theorem does not force it to have large norm; the choice of gradient for our extension at certain points of $\R^n$ is key to our discussion, and is first necessary when dealing with the class of Sobolev spaces $L^{2,p}(\R^2)$ ($p>2$).

A few comments on the proof of Theorem \ref{mainthm} are in order: For certain sets $E$ which appear ``flat" (e.g., $E$ lies on a line), the corresponding interpolation problems are easier, and we provide a bounded linear extension operator $T$ as well as an approximate formula for the trace semi-norm. Since we are seeking an interpolant of $f$ in the homogeneous space $\WR$ that has close to optimal semi-norm, when $E$ lies on a line segment we have the freedom to add a large multiple of an affine function that vanishes on $E$ to any proposed optimal interpolant $F \in \WR$ without affecting the size of its semi-norm. That is, any bounded linear extension operator $T$ is certainly not close to unique, since $\tilde{T}(f) := T(f) + L$, for $L$ affine and vanishing on $E$, is also a bounded linear extension operator. There exists a corresponding remark when $E$ is merely ``flat", and there will still be freedom in choosing $T$ in this case as well.

Now, for an arbitrary finite set $E \subset \R^2$, through use of a Calder\'on-Zygmund decomposition we partition $E$ into local pieces $\{E_\nu\}^K_{\nu=1}$ given by taking the intersection of $E$ with a family of CZ squares. $E_\nu$ are essentially disjoint, with $E_\nu \subset E$, $\bigcup_\nu E_\nu = E$, and $E_\nu$ ``flat". For each of these sets $E_\nu$, we form an associated local interpolation problem for $f|_{E_\nu}$, which we can solve thanks to the above remarks. This local extension is not uniquely determined, and there is no obvious choice which eliminates the inherent freedom. One of the key aspects of our construction is a resolution of this troubling non-uniqueness that we achieve by making extra assumptions on our local interpolants, making them globally consistent. All of this follows from a closer examination of the geometry of our Calder\'on-Zygmund decomposition; this leads to the notion of Keystone squares described in \ref{alg}. Finally, upon patching together these local solutions using a partition of unity, we arrive at an interpolant $T(f) \in L^{2,p}(\R^2)$ of $f:E\rightarrow \R$ which is provably optimal, and obviously linear.

Calder\'on-Zygmund decompositions first entered the picture in \cite{F1}, where Fefferman used them in an intricate induction procedure to prove the finiteness principle for $X=C^m, C^{m-1,1}$. Though, in his proof, each of the Calder\'on-Zygmund squares were treated as an equal, no square more important than its fellow. This is in stark contrast to the picture for $L^{2,p}(\R^2)$, where certain Calder\'on-Zygmund squares - which we coin Keystone squares - are used to determine the behavior of our interpolant on far away regions of $\R^2$. This non-local behavior makes its first appearance in our solution to the Whitney extension problem for $L^{2,p}(\R^2)$. In fact, in \cite{FIL1} we show that this is, in some sense, a necessary feature of Sobolev extension operators.

For comparative purposes, we recall from \cite{F2} a theorem of Fefferman that resolves Questions 2 and 3 for the space $C^{m-1,1}(\R^n)$.

\begin{thm}[Fefferman '03/'05]
\label{finprin}
There exists a positive integer $k^\#(m,n)$ so that the following holds: Let a finite $E \subset \R^n$ be given. Then for any $f:E \rightarrow \R$ we have
$$\|f\|_{C^{m-1,1}(\R^n)|_E} \approx \max_{\substack{S\subset E \\ \#(S) \leq k^\#} } \{\|f|_S\|_{C^{m-1,1}(\R^n)|_S}\}.$$

Moreover, there exists a bounded linear operator $T: C^{m-1,1}(\R^n)|_E \rightarrow C^{m-1,1}(\R^n)$ which satisfies
\begin{enumerate}
\item $Tf|_E = f$ for all $f:E \rightarrow \R$;
\item $\|Tf\|_{C^{m-1,1}(\R^n)} \leq C \|f\|_{C^{m-1,1}(\R^n)|_E}$, for $C = C(m,n)$;
\item $Tf(y) = \sum_{x \in E} f(x) \lambda_x(y)$, so that for all $y \in \R^n$, $\# \{x \in E: \lambda_x(y) \neq 0\} \leq k$, with $k=k(m,n)$.
\end{enumerate}
\end{thm}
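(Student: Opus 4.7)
My plan is to prove both halves of Theorem \ref{finprin} simultaneously by reducing, via the classical Whitney extension theorem, from an interpolation problem for $f:E\to\R$ to a selection problem for \emph{Whitney fields} $(P_x)_{x\in E}$ of polynomials of degree $\le m-1$. Recall that Whitney's theorem provides a linear, local operator $\mathcal{W}$ mapping any such field satisfying the two-point compatibility bounds to a function $F\in C^{m-1,1}(\R^n)$ with $J^{m-1}_x F=P_x$ and $C^{m-1,1}$-norm comparable to the Whitney-field norm. It therefore suffices to produce, linearly in $f$, a Whitney field with $P_x(x)=f(x)$ and field norm $\lesssim M(f):=\max_{S\subset E,\,\#S\le k^\#}\|f|_S\|_{C^{m-1,1}(\R^n)|_S}$.

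For the finiteness inequality, I would attach to each $x\in E$ the convex set $\Gamma(x)\subset \mathcal{P}_{m-1}$ consisting of polynomials $P$ with $P(x)=f(x)$ that arise as the $(m{-}1)$-jet at $x$ of \emph{some} extension of $f|_S$ of norm $\le C\,M(f)$ for \emph{every} $S\ni x$ with $\#S\le k^\#$. Each $\Gamma(x)$ is nonempty by hypothesis. The crux is a pigeonhole/merging step: given $x,y\in E$, witness sets $S_x\ni x$ and $S_y\ni y$ of size $\le k^\#/2$ can be merged into $S=S_x\cup S_y$ with $\#S\le k^\#$, and an extension of $f|_S$ of norm $O(M(f))$ then has $(m{-}1)$-jets at $x$ and $y$ that lie in $\Gamma(x)$ and $\Gamma(y)$ respectively and automatically satisfy the Whitney two-point bound with constant $O(M(f))$. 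Hence any consistent selection from the $\Gamma(x)$'s assembles into a Whitney field to which $\mathcal{W}$ may be applied.

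The linear operator requires a \emph{linear} selection from these $\Gamma(x)$'s. Following Fefferman, I would organize the $\Gamma(x)$'s into \emph{shape fields} on $E$ and argue by induction on a finite combinatorial invariant, roughly the dimension of an ``essential'' subspace of $\mathcal{P}_{m-1}$ capturing the affine span of $\Gamma(x)-\Gamma(x)$ across $x$. At each step one either produces a linear selection outright (base case: the jet is essentially forced by the data), or detects a flat direction along which one may impose an affine linear constraint on $P_x$ at moderate cost, reducing to a strictly simpler shape field. To obtain the locality property $(3)$, one performs this selection cube-by-cube after a Calder\'on--Zygmund decomposition of $E$, so that $P_x$ depends only on $f(y)$ for $y$ in a bounded number of nearby CZ cubes.

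The main obstacle is this linear, local shape-field selection. One needs uniform control on the geometry of $\Gamma(x)$ as $f$ varies, a stable choice of basis adapted to $\Gamma(x)$ that is linear in $f$, and an induction whose complexity reduction does not inflate the locality constant at each step. Managing these interacting requirements is the technical heart of the proof, and it is here that the value of $k^\#(m,n)$ is ultimately fixed; once this is accomplished, conditions $(1)$--$(3)$ follow by applying $\mathcal{W}$ to the selected field and tracking the bounded locality of both operations.
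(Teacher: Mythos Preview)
The paper does not prove Theorem~\ref{finprin}; it is stated there only for comparative purposes and attributed to Fefferman, with a citation to \cite{F2} (and \cite{F1}). There is therefore no proof in this paper against which to compare your proposal.

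As an outline of Fefferman's actual argument, your sketch captures the correct architecture: reduction to Whitney fields via the classical Whitney extension theorem, the convex sets $\Gamma(x)$ of candidate jets, an induction on a combinatorial invariant (in Fefferman's papers these are the \emph{labels} $\mathcal{A}\subset\{\alpha:|\alpha|\le m-1\}$, ordered by a certain well-ordering), and a Calder\'on--Zygmund decomposition to obtain locality. A few details in your sketch are imprecise relative to the actual proof. Your definition of $\Gamma(x)$ as jets consistent with \emph{every} small $S\ni x$ is not quite the object Fefferman works with; he defines $\Gamma(x,M)$ as jets at $x$ of norm-$M$ extensions of $f$ on all of $E$, and the finiteness principle is what shows this is controlled by small subsets. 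The merging argument you describe (combining $S_x$ and $S_y$) is closer in spirit to the Brudnyi--Shvartsman approach for $m=2$; Fefferman's proof of the finiteness inequality for general $m$ is itself an induction on labels intertwined with the CZ decomposition, not a direct pigeonhole. Finally, the linear selection is not performed by detecting a single ``flat direction'' and imposing one affine constraint, but by the more elaborate machinery of \emph{Glaeser refinement} and the passage from $\Gamma$'s at one label to $\Gamma$'s at a smaller label. Your sketch is a fair bird's-eye view, but each of these steps hides substantial work that your proposal does not yet supply.
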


The depth of a bounded linear operator $T$ is defined to be the smallest $k$ such that $P3$ holds. An extension operator $T$ which satisfies $P3$ is said to be of \textit{bounded depth}. Notice that the constants $k$, $k^\#$, and $C$ depend only on $m$ and $n$. This property is important, since the above theorem is trivial if we allow them to depend on $\#(E)$. Limiting behavior of the linear operators $T$ for growing finite sets, along with compactness properties of $C^{m-1,1}(\R^n)$, allow one to recover a variant of Theorem \ref{finprin} for arbitrary closed sets $E \subset \R^n$. The difficulty is that in the limit one often loses control on the bounded depth property of the operator. In \cite{L2}, these concerns were answered by Luli when he produced bounded depth $C^{m,\omega}(\R^n)$ extension operators for $E$ an arbitrary closed set.

To understand the statement of Theorem \ref{finprin}, we should consider $m,n$ to be fixed integers, but imagine that $\# E = N$ with $N$ very large (compared to $k^\#(m,n)$). Theorem \ref{finprin} then states: If for all small sets $S \subset E$ with at most $k^\#$ elements there exists an interpolant of $f|_S : S \rightarrow \R$ with $C^{m-1,1}(\R^n)$-norm less than or equal to $M$, then there exists an interpolant of $f$ with $C^{m-1,1}(\R^n)$-norm less than or equal to $CM$, for $C=C(m,n)$.

One may wonder if Theorem \ref{finprin} actually provides a formula for the trace norm. In fact, it is simple to find explicit linear functionals $\{\lambda_S^i\}^C_{i=1} \in (\WE)^*$ ($C=C(m,n)$) with $\|f|_S\|_{C^{m-1,1}(\R^n)|_S} \approx \max_i |\lambda^i_S(f|_S)|$, as long as $\#(S) \leq k^\#(m,n)$. This follows as a consequence of the classical Whitney extension theorem, whereby one is able to formulate the trace norm as the global minimum of a quadratic form (see Section 3 in \cite{F2}). Thus, from Theorem \ref{finprin},
\begin{equation}
\label{cmform0}
\|f\|_{C^{m-1,1}(\R^n)|_E} \approx \max_{\substack{S \subset E \\ \#(S) \leq k^\#}} \max_{1\leq i \leq C} |\lambda_S^i(f)|,
\end{equation}
with $\lambda^i_S$ a linear functional of $f$ depending only on the restriction of $f$ to a subset $S$ of size at most $k^\#$.

It should be noted that this formula is still somewhat undesirable. In fact, $O(N^{k^\#})$ terms appear on the RHS of \eqref{cmform0}. So, even if each term requires little work to calculate, the sheer number of terms makes this formula somewhat unpleasing from a computational perspective. In \cite{FK1}, Fefferman and B. Klartag remedy this situation with the much improved result: $\exists S_1,\cdots,S_L \subset E$ ($L \leq CN$), with $\#(S_i) \leq k^\#$, and for which 
\begin{equation}
\label{cmform}
\|f\|_{C^{m-1,1}(\R^n)|_E} \approx \max_{1 \leq i \leq L} \|f|_S\|_{C^{m-1,1}(\R^n)|_S} \approx \max_{1 \leq i \leq L'} |\lambda^i(f)|,
\end{equation}
for $\{\lambda^i\}^{L'}_{i=1}$ ($L' \leq C' N$) linear functionals each depending on the restriction of $f$ to a set of size at most $k^\#$.

Note the close analogy between Theorem \ref{mainthm} and Theorem \ref{finprin}. In particular, our formula for $\|f\|_{\WR|_E}$ corresponds with \eqref{cmform}, but with an $l^p$ norm replacing the $l^\infty$ norm present in \eqref{cmform}. Upon comparing Theorem \ref{mainthm} with Theorem \ref{finprin} and \eqref{cmform}, we find a few differences, which are listed below:

\begin{itemize}
\item In \eqref{cmform}, $\lambda^i(f)$ each depend on only $k^\#(m,n)$ function values, but no such property is satisfied for the $\lambda_i$ in Theorem \ref{mainthm}.
\item The total number of linear functionals in \eqref{cmform} is $O(N)$; whereas in Theorem \ref{mainthm} we use $O(N^2)$ linear functionals.
\item We have no bound on the depth of the linear map $T$ in Theorem \ref{mainthm}.
\end{itemize}
 
In \cite{FIL1} we exhibit a finite set $E \subset \R^2$ so that the depth of \textit{any} linear extension operator $T: \WR|_E \rightarrow \WR$ necessarily depends on $N$; this highlights a key difference between the Sobolev and $C^m$ versions of the problem, and resolves the third bullet-point above. In \cite{FIL1} we also sharpen the conclusions of Theorem \ref{mainthm} as follows:

Fix $\omega_1, \cdots, \omega_L \in (L^{2,p}(\R^2)|_E)^*$. We say that a linear functional $\lambda \in (L^{2,p}(\R^2)|_E)^*$ is of \textit{assisted bounded depth} with \textit{assists} $\omega_1,\cdots \omega_L$ if
\begin{align*}
&\lambda(f) = \sum_{x \in E} \alpha^x f(x) + \sum_{k=1}^{L} \beta^k \omega_k(f), \; \mbox{with} \\
&\#\{x \in E : \alpha^x \neq 0\} + \#\{1 \leq k \leq L: \beta^k \neq 0\} \leq k, \; \mbox{and} \\
& \displaystyle \sum_{l=1}^L ( \# \; \mbox{of non-zero coefficients of} \; \omega_l ) \leq C N,
\end{align*}
for some constants $k,C$ depending only on $p$. We prove that a simple modification to $T$ from Theorem \ref{mainthm} gives us the following improved properties:
\begin{align*}
&\mbox{There exist assists} \; \omega_1,\cdots,\omega_L \in (L^{2,p}(\R^2)|_E)^*, \;\mbox{so that} \\
&\mbox{for all} \; x \in \R^2, Tf(x) \; \mbox{is of assisted bounded depth. Moreover,} \\
&\mbox{There exist linear functionals} \; \lambda_1, \cdots, \lambda_{L'} \; \mbox{of assisted bounded depth with} \; L' \leq C N, \; \mbox{and} \\
&\|Tf\|^p_{L^{2,p}(\R^2)} \approx M(f)^p := \sum_{i=1}^{L'} |\lambda_i(f)|^p. \\
\end{align*}
Finally, we pose an open question regarding the existence of a formula for the norm which is closer in spirit to \eqref{cmform}. There are two formulations to this question, and we present both for the sake of completeness.

\noindent\textbf{Open Problem (Form 1):} Given $E \subset \R^2$ with $\#(E)=N$, do there exist linear functionals $\{\lambda_i\}^{L}_{i=1} \subset (\WR)^*$ ($L \leq CN$), each depending on only $O(1)$ function values, and for which $$\|f\|^p_{L^{2,p}(\R^2)|_E} \approx \sum^L_{i=1} |\lambda_i(f)|^p \; \mbox{?}$$
\textbf{Open Problem (Form 2):} Given $E \subset \R^2$ with $\#(E)=N$, do there exist $S_1,S_2,\cdots,S_L \subset E$, and $B_1,\cdots,B_L > 0$ with $\#(S_i) \leq O(1)$, $L \leq CN$, and for which $$\|f\|^p_{L^{2,p}(\R^2)|_E} \approx \sum^L_{i=1} B_i\|f|_{S_i}\|^p_{L^{2,p}(\R^2)|_{S_i}} \; \mbox{?}$$

By an argument similar to the one producing \eqref{cmform0}, a positive answer to Form 2 would imply one for Form 1. These two problems are conjectured analogues of the finiteness principle for $C^m(\R^n)$. It is still one of our goals to resolve either Form 1 or Form 2 listed above.

\subsection{Acknowlegements}

This paper forms the main body of the author's doctoral dissertation. I would like to thank my thesis advisor, Charles Fefferman, for proposing this problem to me, for his always thoughtful remarks, and seemingly infinite patience.

Recently, during the '10 Whitney Workshop held at AIM, P. Shvartsman announced his proof of existence of bounded linear extension operators for $L^{2,p}(\R^2)$. There, I heard about the techniques he developed for studying Sobolev extensions; it would be interesting for me to study the relationship between his methods and the ones put forth here. We look forward to hearing more of his work in the near future.

Open Problem (Form 2) as listed in the introduction was first conjectured as a direct analogue of the finiteness principle for $C^m(\R^n)$ by participants of the '09 Whitney Workshop at William \& Mary College sponsored by the ONR. I am pleased to thank ONR, as well as the organizers and participants of this Workshop for helping to bring about my interest in these problems. It is also a great pleasure to thank AIM for hosting the '10 Whitney Workshop and providing a fruitful environment for research; work contained in this paper, along with ideas that are currently being developed, was carried out on their grounds this past Summer. In particular, I am grateful to Bo'az Klartag, Kevin Luli, Assaf Naor, Pavel Shvartsman, and Nahum Zobin for many stimulating discussions over the past years.

\section{Notation and Definitions}
\label{notation}
For $x \in \R^2$, $|x|$ is to denote the standard Euclidean norm of $x$. For $\Omega_1,\Omega_2 \subset \R^2$, we denote the distance between them by $d(\Omega_1,\Omega_2) = \inf\{|x-y| : x \in \Omega_1,\; y \in \Omega_2\}$. For a multi-index $\alpha = (\alpha_1,\alpha_2) \in \Z_+^2$, we denote its \textit{order} by $|\alpha| = \alpha_1 + \alpha_2$. By a \textit{Universal Constant} $C$ we mean a positive real number depending only on $p$. Two non-negative real numbers $A$ and $B$ are said to be $C$-equivalent, or rather $A \approx B$, if $A/C \leq B \leq CA$.

Often we will construct an object $\mathcal{O}$, and satisfying certain properties, numbered, e.g., $(1)$, $(2)$, $(3)$. These properties will be referred to within the body of text where this object is defined (be it a certain section, lemma or proposition) as $P1$ of $\mathcal{O}$, $P2$ of $\mathcal{O}$, and $P3$ of $\mathcal{O}$.

A \textit{Euclidean coordinate} system on $\R^2$ consists of two affine functions $z_1,z_2 : \R^2 \rightarrow \R$ with $\nabla z_1 \cdot \nabla z_2 = 0$, whereby we can uniquely represent $x \in \R^2$ in coordinates: $x = (z_1(x),z_2(x))_{z_1z_2}$. Alternatively, given two orthonormal vectors $e_1,e_2 \in \R^2$, and a basepoint $x_0 \in \R^2$, we may set $z_i(x) = e_i \cdot(x-x_0)$, which forms a Euclidean coordinate system.

For a line $l \subset \R^2$, and $\Omega \subset \R^2$, we define the projection of $\Omega$ onto $l$ by $\mbox{proj}_{l} \Omega = \bigcup_{x \in \Omega} \mbox{proj}_l x$, with $\mbox{proj}_l x$ the standard orthogonal projection of $x \in \R^2$ onto $l$.

Given a map $\Phi: \Omega \rightarrow \R^2$, and a choice of Euclidean coordinates $(z_1,z_2)$, the components of $\Phi$ (relative to $(z_1,z_2)$) are denoted by $\Phi^1$, $\Phi^2$. That is, $\Phi^i$ is defined to satisfy: $\Phi(x) = (\Phi^1(x),\Phi^2(x))_{z_1z_2}$. Of course this notation is dependent on $(z_1,z_2)$, but the dependence of $\Phi^i$ on the coordinate system is dropped when it is clear from the context.

For the following definition we must fix a base Euclidean coordinate system on $\R^2$ for the remainder of the paper; this base coordinate system is left unnamed. A \textit{square} $Q \subset \R^2$ is of the form $[a_1,b_1]\times[a_2,b_2]$ (where $b_1 - a_1 = b_2 - a_2 > 0$). Note from the definition that an arbitrary rotation of a square is not necessarily a square. We denote the sidelength of $Q$ by $\delta_Q = b_1 - a_1$, and the center of $Q$ by $c_Q$. For a real number $A>0$, we define $AQ = \{A(x-c_Q)+c_Q: x \in Q\}$, which is the $A$-dilate of $Q$ about its center. We set $\tilde{Q} = 1.3 Q$. When $G \subset \R^2$ is any set that fails to be a square, we choose to abuse notation and define $A G = \{A y : y \in G\}$ for $A>0$. In particular the preceding definition will be used whenever $G$ is a finite set.

Given a square $Q$, it can be decomposed into its dyadic children as follows: $Q^1_0,Q^1_1,Q^1_2,Q^1_3$ are disjoint except for their boundaries, their union is $Q$, and $\delta_{Q^1_i} = \frac{1}{2} \delta_Q$. We say that $Q$ is the parent of $Q^1_i$, or equivalently that $Q^1_i$ is a child of $Q$. As one continues to cut the children of some base square $Q^\circ$, we form a quadtree whose leaves are a collection of squares that are disjoint except for their boundaries, and with union equal to $Q^\circ$. Let $\CZ$ be a labeling of these squares. If $Q_\nu \cap Q_{\nu'} \neq \emptyset$ we say that $Q_\nu$ and $Q_{\nu'}$ are neighbors, and often write $Q_\nu \leftrightarrow Q_{\nu'}$ (or rather $\nu \leftrightarrow \nu'$). Note that our definition of neighbor allows a square to be neighbors with itself. Unless we are in the trivial setting where $\CZ = \{Q^\circ\}$, every square $Q \in \CZ$ arose from bisection of its parent. We denote by $Q^+$ the parent of $Q$.

We denote the space of affine functions by $P=\{A \cdot x+b: A \in \R^2,b \in \R\}$, and the space of \textit{Whitney Fields} on $E_1 \subset \R^2$ by $Wh(E_1) = \{(L_x)_{x \in E_1}\}$ (with each $L_x \in P$). Given $F \in C^1(\R^2)$, and $x_0 \in \R^2$, we define the 1-jet of $F$ at $x_0$ by 
$$J_{x_0} F = F(x_0) + \nabla F(x_0) \cdot (x-x_0),$$
as well as the 1-jet of $F$ on $E_0$ by 
$$J_{E_0} F = (J_x F)_{x \in E_0} \in Wh(E_0),$$
furthermore we say that $F$ is an interpolant of $f:E \rightarrow \R$ if and only if $F|_E = f$.

Analogously, for $G \in C^1(\R)$, we define the 1-jet of $G$ at $x_0 \in \R$ by $J_{x_0} G = G(x_0) + G'(x_0) (x-x_0)$.

Suppose $X$ is some vector space of functions defined on $\Omega \subset \R^2$ equipped with a semi-norm $\|\cdot\|_X$, and $F \in X$ is given. For a universal constant $C>0$, we say that $\|F\|_X$ is  C-optimal with respect to properties $p_1,p_2,\ldots,p_m$ if $F$ satisfies $p_1,p_2,\ldots,p_m$, and also 
$$\|F\|_X \leq C \inf\{\|G\|_X : G \; \mbox{satisfies} \; p_1,p_2,\ldots,p_m\}.$$
A C-optimal $F \in X$ satisfying $p_1,p_2,\cdots$ is to be understood in the same way.

For a domain $\Omega \subset \R^2$, we define the homogeneous Sobolev space $L^{2,p}(\Omega)$, as well as the non-homogeneous $W^{2,p}(\R^2)$, to consist of functions $F:\Omega\rightarrow \R$ for which the respective semi-norm is finite:
\begin{align*}
& \|F\|_{L^{2,p}(\Omega)} := \|\nabla^2 F\|_{L^p(\Omega)} < \infty,\\
& \|F\|_{W^{2,p}(\Omega)} := \|F\|_{L^p(\Omega)} + \|\nabla F\|_{L^p(\Omega)} + \|\nabla^2 F\|_{L^p(\Omega)} < \infty.
\end{align*}
In particular, $\|\cdot\|_{W^{2,p}(\Omega)}$ induces a norm on the Banach space $W^{2,p}(\Omega)$, while $L^{2,p}(\Omega)$ forms a complete semi-normed vector space under $\|\cdot\|_{L^{2,p}(\Omega)}$. The preceding semi-norms can be extended to vector valued mappings $\Phi:\Omega \rightarrow \R^2$ in an obvious way.

For an interval $I \subset \R$ (bounded or unbounded), we define the homogeneous Besov space $\bes(I)$, as well as the non-homogeneous $\besfull(\R)$ to consist of functions $\varphi:I \rightarrow \R$ for which the respective semi-norm is finite:
\begin{align*}
& \|\varphi\|_{\bes(I)} := \left(\int_I\int_I \frac{|\varphi'(x) - \varphi'(y)|^p}{|x-y|^p} dx dy \right)^{1/p} < \infty, \\
& \|\varphi\|_{\besfull(I)} := \|\varphi\|_{L^p(I)} + \|\varphi'\|_{L^p(I)} + \|\varphi\|_{\bes(I)} < \infty.
\end{align*}
As before, $\besfull(I)$ (resp. $\bes(I)$) forms a Banach (resp. complete semi-normed) space under $\|\cdot\|_{\besfull(I)}$ (resp. $\|\cdot\|_{\bes(I)}$).

We now define a geometric quantity which is used to measure the flatness of subsets of $\R^2$. In particular, this notion will be specially adapted to the problem of extension within a Sobolev space in the following way: It will be the case that subsets of $\R^2$ for which this quantity is small will have corresponding Sobolev extension problems that are easy to solve. Given an arbitrary set $\Omega \subset \R^2$, we define the Besov semi-norm of $\Omega$ by:
\begin{equation*}
\|\Omega\|_{\bes} = \inf \{\|\varphi\|_{\bes(\R)}: (z_1,z_2) \; \mbox{Euclidean coordinate system}, \; \Omega \subset \{(z_1,\varphi(z_1))_{z_1z_2} : z_1 \in \R\} \}.
\end{equation*}
\begin{rek} When $\Omega$ is not contained in the graph of any Besov function as above,  $\|\Omega\|_{\bes} = \infty$. Alternatively, when $\Omega$ lies on a line, $\|\Omega\|_{\bes} = 0$.
\end{rek}
Given a square $Q \subset \R^2$, a finite set $E_0 \subset Q$, a function $f:E_0 \rightarrow \R$, a point $x_0 \in Q$, and a real number $M \geq 0$. We define the local $\Gamma$'s and $\sigma$'s by:
\begin{align*}
&\Gamma_Q(f,x_0,M) = \{J_{x_0}F : F|_{E_0} = f, \; \|F\|_{L^{2,p}(Q)} \leq M\}, \; \mbox{and}\\
&\sigma_Q(E_0, x_0) = \{J_{x_0}F :F|_{E_0} = 0, \; \|F\|_{L^{2,p}(Q)} \leq 1\};
\end{align*}
as well as the global $\Gamma$'s and $\sigma$'s:
\begin{align*}
&\Gamma(f,x_0,M) = \{J_{x_0}F : F|_E = f, \; \|F\|_{L^{2,p}(\R^2)} \leq M\}, \; \mbox{and}\\
&\sigma(E_0,x_0) = \{J_{x_0}F : F|_{E_0} = 0, \; \|F\|_{L^{2,p}(\R^2)} \leq 1\}.
\end{align*}
We set aside $0 < c_1, c_2, \cdots < 1/100$ for small universal constants whose precise values are fixed throughout the paper. We use $c,C,\tilde{c},\tilde{C}, C_1, C_2, \cdots$ for universal constants whose values are independent of $c_i$. Unless otherwise stated, the values of these constants may change from one occurrence to the next, though we promise to fix their meaning within a particular theorem, lemma, or section in order to avoid confusion. We always use capitalized letters for ``large'' constants, while uncapitalized letters are to denote constants that are sufficiently small. 

\section{Background Material}

We start by recalling the well known Sobolev embedding theorem, which we state for the function space $L^{2,p}(Q)$, with $Q$ a square, and $p>2$. 

\begin{lem}[Sobolev embedding theorem (SET)]
\label{sobimbthm}
Let $Q$ be a square, and $x_0 \in Q$. For $F \in L^{2,p}(Q)$ we define $L^\circ = J_{x_0} F$. Then for $x \in Q$,
\begin{align*}
&|\nabla F(x) - \nabla F(x_0)| \lesssim |x-x_0|^{1-2/p} \|F\|_{L^{2,p}(Q)};  \\
&|F(x) - L^\circ(x)| \lesssim |x-x_0|^{2-2/p} \|F\|_{L^{2,p}(Q)}; \\
&\|\nabla F - \nabla F(x_0)\|_{L^{p}(Q)} \lesssim \delta_Q \|F\|_{L^{2,p}(Q)}; \\
&\|F - L^\circ\|_{L^{p}(Q)} \lesssim \delta^2_Q \|F\|_{L^{2,p}(Q)}.
\end{align*}
\end{lem}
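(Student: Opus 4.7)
The plan is to derive all four estimates from Morrey's embedding $W^{1,p}(Q) \hookrightarrow C^{0,1-2/p}(Q)$ applied to $\nabla F$, followed by routine pointwise-to-integral computations. The first inequality is the nontrivial input; the other three are corollaries.

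First, I would apply Morrey's inequality to each component $G = \partial_i F \in W^{1,p}(Q)$, using $\|\nabla G\|_{L^p(Q)} \leq \|F\|_{L^{2,p}(Q)}$. Since $p > 2 = n$ and $Q$ is convex, for a.e.\ $x, x_0 \in Q$,
$$|G(x) - G(x_0)| \leq C |x - x_0|^{1 - 2/p} \|\nabla G\|_{L^p(Q)},$$
with $C = C(p)$. A dilation argument (sending $Q$ to the unit square) shows that $C$ is indeed independent of $\delta_Q$, since both sides scale like $\delta_Q^{1-2/p}$. Summing over $i = 1, 2$ gives the first inequality. The standard route to Morrey goes via the pointwise bound
$$|G(x) - G_B| \leq C \int_B \frac{|\nabla G(z)|}{|x - z|^{n-1}} \, dz$$
for balls $B \subset Q$ centered at $x$, combined with Hölder's inequality, which is finite precisely because $p > n = 2$.

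Second, for the estimate on $|F(x) - L^\circ(x)|$, I would invoke the fundamental theorem of calculus along the segment $[x_0, x] \subset Q$ (convexity of $Q$),
$$F(x) - L^\circ(x) = \int_0^1 \bigl[\nabla F(x_0 + t(x - x_0)) - \nabla F(x_0)\bigr] \cdot (x - x_0) \, dt,$$
and bound the integrand by $C\, t^{1 - 2/p} |x - x_0|^{2 - 2/p} \|F\|_{L^{2,p}(Q)}$ using the first inequality. Since $1 - 2/p > -1$, integration of $t^{1-2/p}$ over $[0,1]$ converges and yields the second inequality.

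Finally, I would obtain the two $L^p$ estimates by raising the corresponding pointwise inequalities to the $p$-th power and integrating over $x \in Q$. Using $|x - x_0| \leq \sqrt{2}\,\delta_Q$ and polar coordinates around $x_0$, one has $\int_Q |x - x_0|^{p - 2} \, dx \lesssim \delta_Q^p$ and $\int_Q |x - x_0|^{2p - 2} \, dx \lesssim \delta_Q^{2p}$ (both exponents are positive since $p > 2$); taking $p$-th roots gives the third and fourth inequalities. The main obstacle in this proof is establishing Morrey's embedding on $Q$ with a constant independent of $\delta_Q$; once this is in hand, the remaining steps reduce to the fundamental theorem of calculus along segments and elementary integration.
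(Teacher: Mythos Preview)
Your proposal is correct and follows essentially the same approach as the paper: the paper's proof (given as a remark following the lemma) also reduces everything to the Morrey embedding $\|F\|_{\dot{C}^{1,1-2/p}(Q)} \lesssim \|F\|_{L^{2,p}(Q)}$, derives the second inequality from Taylor's theorem (your fundamental-theorem-of-calculus computation along the segment is exactly this), and obtains the last two by integrating the first two over $Q$. You supply more detail on the scaling and the integration, but the structure is identical.
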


\begin{rek}
Recall that for $0 < \alpha \leq 1$, the semi-norm given by
$$\|F\|_{\dot{C}^{1,\alpha}(Q)} = \sup_{x,y \in Q} \frac{|\nabla F(x) - \nabla F(y)|}{|x-y|^\alpha}$$
defines the semi-normed complete vector space $\dot{C}^{1,\alpha}$ to be the collection of all continuously differentiable $F$ for which the preceding quantity is finite. Then, the SET is often succinctly stated as the inequality of semi-norms: $\|F\|_{\dot{C}^{1,1-2/p}(Q)} \lesssim \|F\|_{L^{2,p}(Q)}$ for all $F \in L^{2,p}(Q)$. Note that the first two inequalities above follow from this inequality as well as an application of Taylor's Theorem for $\dot{C}^{1,1-2/p}(Q)$ functions, while the last two inequalities follow by integrating the first two inequalities over the square $Q$.
\end{rek}

\begin{rek}
\label{twosquares}
The SET generalizes easily to domains which are the union of two intersecting squares $\Omega = Q_1 \cup Q_2$. For points $x,x_0 \in Q_i$ ($i=1$ or $2$), the first two inequalities extend directly. When $x \in Q_1$ and $x_0 \in Q_2$, we can introduce a third point $x_1 \in Q_1 \cap Q_2$, and use that $|x_1-x_0|,|x_1-x| \leq |x-x_0|$ to deduce similar inequalities.  The third and fourth inequalities follow in the same way, except $\delta_Q$ is replaced by $\mbox{diam}(\Omega) \approx \max\{\delta_{Q_1},\delta_{Q_2}\}$. This argument fails for domains $\Omega= R_1 \cup R_2$, where $R_1,R_2$ are arbitrary intersecting rectangles, e.g., in the case when $\Omega$ looks like a thin 'V', the first two inequalities in Lemma \ref{sobimbthm} fail to hold uniformly over such choices of $\Omega$.
\end{rek}

We will also make use of the Besov embedding theorem for univariate functions $\varphi \in \bes(\R)$. Recall that the homogeneous Besov space arose in our definition of the Besov semi-norm of a set $E \subset \R^2$. Similar in form to the last result, we have the following bounds for Besov functions.

\begin{lem}[Besov embedding theorem (BET)]
Let $I$ be an interval, and $r_0 \in I$. Consider $\varphi \in \bes(I)$, and set $l_0 = J_{r_0} \varphi$. Then for $r \in I$,
\begin{align*}
&|\varphi'(r)-\varphi'(r_0)| \lesssim |r-r_0|^{1-2/p}\|\varphi\|_{\bes(I)}; \\
&|\varphi(r)-l_0(r)| \lesssim |r-r_0|^{2-2/p}\|\varphi\|_{\bes(I)}.
\end{align*}
\end{lem}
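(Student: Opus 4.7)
This is the one-dimensional Besov--Morrey embedding $\bes(I)\hookrightarrow \dot{C}^{1,\,1-2/p}(I)$, exactly parallel to the Sobolev embedding commented on after Lemma \ref{sobimbthm}. I would establish the first displayed inequality by dyadic chaining of averages of $\varphi'$ between $r$ and $r_0$, and then derive the second inequality by integrating the first along $[r_0,r]$.

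\textbf{First inequality.} WLOG $r>r_0$, and I may assume $\varphi$ is smooth by a density argument. Set $h=r-r_0$ and consider the two nested dyadic chains $J_k=[r_0,\,r_0+2^{-k}h]$ and $\tilde{J}_k=[r-2^{-k}h,\,r]$, all of which lie inside $[r_0,r]\subseteq I$. Let $a_k$ and $b_k$ denote the averages of $\varphi'$ over $J_k$ and $\tilde{J}_k$ respectively. Two observations drive the argument: (i) $a_0=b_0$, since both equal $\tfrac{1}{h}\int_{r_0}^{r}\varphi'$, and (ii) $a_k\to \varphi'(r_0)$ and $b_k\to \varphi'(r)$ by the Lebesgue differentiation theorem. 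Telescoping then gives
\[
\varphi'(r)-\varphi'(r_0)\;=\;\sum_{k\geq 0}\bigl(b_{k+1}-b_k\bigr)\;-\;\sum_{k\geq 0}\bigl(a_{k+1}-a_k\bigr).
\]
Now rewrite $a_{k+1}-a_k = \frac{1}{|J_k|\,|J_{k+1}|}\int_{J_k}\int_{J_{k+1}}(\varphi'(y)-\varphi'(x))\,dx\,dy$, and apply H\"older's inequality with exponents $p$ and $p/(p-1)$ to the factorization $|\varphi'(y)-\varphi'(x)| = \frac{|\varphi'(y)-\varphi'(x)|}{|x-y|}\cdot|x-y|$; combined with the pointwise bound $|x-y|\leq 2^{-k}h$ on $J_k\times J_{k+1}$, this yields $|a_{k+1}-a_k|\lesssim (2^{-k}h)^{1-2/p}\,\|\varphi\|_{\bes(I)}$. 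The same estimate holds for $|b_{k+1}-b_k|$, and since $1-2/p>0$ the two geometric series sum to give the desired bound $|\varphi'(r)-\varphi'(r_0)|\lesssim h^{1-2/p}\,\|\varphi\|_{\bes(I)}$.

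\textbf{Second inequality and main obstacle.} Because $l_0(r)=\varphi(r_0)+\varphi'(r_0)(r-r_0)$, the fundamental theorem of calculus yields $\varphi(r)-l_0(r)=\int_{r_0}^{r}(\varphi'(s)-\varphi'(r_0))\,ds$; invoking the first inequality pointwise in $s$ and computing $\int_0^{|r-r_0|} t^{1-2/p}\,dt \lesssim |r-r_0|^{2-2/p}$ delivers the second inequality. The one delicate step is the H\"older bookkeeping in Step 1: the exponents must balance so that the scales $|J_k|,|J_{k+1}|,|x-y|$ combine to exactly $(2^{-k}h)^{1-2/p}$. There are no boundary issues, since both chains $J_k,\tilde{J}_k$ automatically sit inside $[r_0,r]\subseteq I$.
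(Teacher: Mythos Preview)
Your proof is correct. The paper states the BET as background without proof, so there is nothing to compare against; your dyadic-chaining argument followed by integration is a clean, standard route to the embedding $\bes(I)\hookrightarrow \dot{C}^{1,1-2/p}(I)$, and the H\"older bookkeeping you flag as delicate does indeed balance to give the factor $(2^{-k}h)^{1-2/p}$.
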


The following trace/extension theorem is well known in the literature (e.g., see \cite{T1,M1}), and is the main connection between Sobolev spaces and Besov spaces that we draw upon in this paper.

\begin{prop}[Trace/Extension theorem]
Let $G \in L^{2,p}(\R^2)$, and define $g(x) = G(x,0)$. Then $g \in \bes(\R)$ with $\|g\|_{\bes(\R)} \lesssim \|G\|_{L^{2,p}(\R^2)}$; moreover, if $G \in W^{2,p}(\R^2)$ then $g \in \besfull(\R)$ with $\|g\|_{\besfull(\R)} \lesssim \|G\|_{W^{2,p}(\R^2)}$. Conversely, there exists a linear extension operator $T_1: \bes(\R) \rightarrow L^{2,p}(\R^2)$ satisfying
\begin{enumerate}
\item $T_1 g (x,0) = g(x,0)$, for all $x \in \R$;
\item $\|T_1 g\|_{L^{2,p}(\R^2)} \lesssim \|g\|_{\bes(\R)}$;
\item $\|T_1 g\|_{W^{2,p}(\R^2)} \lesssim \|g\|_{\besfull(\R)}$.
\end{enumerate}
\label{trthm}
\end{prop}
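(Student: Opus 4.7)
The plan is to prove the two directions separately, reducing first to $G \in C^\infty_c(\R^2)$ (or $g \in C^\infty_c(\R)$) by density and then passing to the limit. Both parts hinge on relating a quantity that lives on the line $\{y = 0\}$, a set of plane measure zero, to the $L^p$ norm of $\nabla^2 G$ on the plane; the bridge in both directions is an averaging of derivatives over squares of aspect ratio one.

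For the trace inequality, fix $G \in C^\infty_c(\R^2)$ and set $g(x) = G(x, 0)$. For $x_1, x_2 \in \R$ with $h = |x_1 - x_2|$, consider the square $Q_{x_1,x_2}$ of side $h$ with $(x_1, 0)$ and $(x_2, 0)$ on its boundary. Applying Lemma \ref{sobimbthm} on $Q_{x_1,x_2}$ yields
$$|g'(x_1) - g'(x_2)| = |\partial_1 G(x_1, 0) - \partial_1 G(x_2, 0)| \lesssim h^{1-2/p} \|G\|_{L^{2,p}(Q_{x_1,x_2})}.$$
This bound at a single scale is not summable against the singular weight $|x_1-x_2|^{-p}$, so I would telescope across scales: define $g'_k(x) = \int_\R \partial_1 G(x, 2^{-k} t) \psi(t) dt$ for a fixed bump $\psi$ with $\int \psi = 1$, so that $g'_k \to g'$ as $k \to \infty$ and $g'_{k+1} - g'_k$ can be written (after a change of variables exploiting the vanishing mean of the resulting kernel) as an integral of $\partial_{12} G$ over a dyadic horizontal slab of height $\sim 2^{-k}$. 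Splitting the Besov double integral according to the scale $2^{-k_0} \sim |x_1-x_2|$ and invoking Hölder and Fubini reduces the estimate to a sum of disjoint slab contributions whose total is $\|\nabla^2 G\|_{L^p(\R^2)}^p$. The non-homogeneous bound follows since $g, g'$ inherit $L^p$-control directly from the fundamental theorem of calculus in the vertical direction applied to a truncation of $G$.

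For the extension direction, I would construct $T_1$ explicitly. Pick $\psi \in C^\infty_c(\R)$ with $\int \psi = 1$ and supported in $[-1, 1]$, and set
$$T_1 g(x, y) = \int_\R g(x + y t) \psi(t) dt \qquad (y \neq 0),$$
extended by $T_1 g(x, 0) = g(x)$, which gives $P1$ and linearity for free. A direct computation shows that each second derivative $\partial_{ij} T_1 g(x, y)$ can be written, after integrating by parts in $t$, in the form $y^{-2} \int \int [g'(x + y t_1) - g'(x + y t_2)] \Psi_{ij}(t_1, t_2) dt_1 dt_2$ for bounded kernels $\Psi_{ij}$ supported in $[-1,1]^2$. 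Taking the $L^p$ norm in $(x, y)$ and performing the change of variables $u = x + y t_1$, $v = x + y t_2$ at fixed $y$ (so that $|u - v| \sim |y||t_1 - t_2|$) converts the resulting integral into precisely $\|g\|_{\bes(\R)}^p$, which yields $P2$. For $P3$, I would multiply $T_1 g$ by a cutoff $\chi(y) \in C^\infty_c(\R)$ equal to $1$ near $0$; the zeroth and first order $L^p$-norms come from $\|g\|_{L^p(\R)}$ and $\|g'\|_{L^p(\R)}$ respectively, while the second-derivative commutators with $\chi$ are absorbed into $\|\cdot\|_{\besfull(\R)}$ together with lower-order pieces.

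The main obstacle is the trace inequality: the naive single-scale Sobolev-embedding bound cannot be integrated against $|x_1 - x_2|^{-p}$ to give a finite result, so the dyadic telescoping (or equivalently a Littlewood--Paley decomposition) is essential to make the estimate summable. This step is also what dictates the precise smoothness exponent $2 - 2/p$ in the trace space. Once the trace bound is proved for smooth $G$, the general statement extends to all $G \in L^{2,p}(\R^2)$ by approximation, using the SET to control $g, g'$ pointwise on compacta together with Fatou's lemma applied to the Besov double integral.
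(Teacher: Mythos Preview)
The paper does not give its own proof of this proposition; it is stated as a well-known result and attributed to the literature (Triebel \cite{T1}, Miyazaki \cite{M1}). So there is no ``paper's proof'' to compare against---your sketch is being measured against the standard argument, and it is essentially that standard argument.

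Your approach is sound in both directions. One small correction in the extension half: after a single integration by parts in $t$, each second derivative $\partial_{ij}T_1 g(x,y)$ takes the form
\[
\frac{1}{y}\int_{\R} g'(x+yt)\,\eta_{ij}(t)\,dt,\qquad \int_{\R}\eta_{ij}=0,
\]
i.e.\ with a factor $y^{-1}$ rather than $y^{-2}$. Using the mean-zero property of $\eta_{ij}$ to rewrite this as $y^{-1}\iint [g'(x+yt_1)-g'(x+yt_2)]\,\eta_{ij}(t_1)\psi(t_2)\,dt_1\,dt_2$ and then changing variables $(x,y)\mapsto(u,v)=(x+yt_1,x+yt_2)$ at fixed $(t_1,t_2)$ gives exactly $\|g\|_{\bes(\R)}^p$ with the correct homogeneity (the Jacobian contributes $|t_1-t_2|^{-1}$ and $|y|^{-p}=|t_1-t_2|^p|u-v|^{-p}$, so the $t$-integral converges since $p>2$). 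With $y^{-2}$ and first differences of $g'$ the scaling would be off by one power. This is a bookkeeping slip, not a gap in the method.

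The trace direction via dyadic vertical averaging of $\partial_1 G$ and telescoping is likewise the standard route; your identification of the obstacle (that the single-scale SET bound is not directly summable against $|x_1-x_2|^{-p}$) is exactly right.
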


We now present a technical lemma relating to the definition of the Besov semi-norm of a set: Through rotating coordinates appropriately, we can arrange a set with small Besov semi-norm to lie on the graph of a function with small Besov \textit{norm}. We use this Lemma in our proof of the implicit function theorem.

\begin{lem}
\label{nicecurve}
There exists a universal constant $c>0$ so that for any real numbers $0 \leq \kappa_1,\kappa_2 \leq c$ the following holds: Let $\Omega \subset \R^2$ be an arbitrary set with $\mbox{diam}(\Omega) \lesssim 1$, and $\|\Omega\|_{\bes} \leq \kappa_1$. Let $(u,v)$ be Euclidean coordinates on $\R^2$, with the following property: if $\#(\Omega) \geq 2$, then there exist distinct $x_0,y_0 \in \Omega$ with $|v(x_0)|,|v(y_0)|, |v(y_0)-v(x_0)|/|u(y_0)-u(x_0)| \leq \kappa_2$, whereas for $\#(\Omega) \leq 1$ we have $|v(x_0)| \leq \kappa_2$ if $x_0 \in \Omega$. Then there exists a function $\tilde{\varphi} \in \besfull(\R)$ with 
\begin{enumerate}
\item $\|\tilde{\varphi}\|_{\besfull(\R)} \lesssim \kappa_1 + \kappa_2$, and
\item $\Omega \subset \{(u,\tilde{\varphi}(u)) : u \in \R\}$.
\end{enumerate}
\end{lem}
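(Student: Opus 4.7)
The plan is to construct $\tilde\varphi$ by first invoking the definition of $\|\Omega\|_{\bes}$ to realize $\Omega$ as a subset of the graph of a function $\varphi_0$ in some Euclidean coordinate system $(z_1, z_2)$, then rotating that coordinate system to align it with $(u, v)$, and finally multiplying by a smooth cutoff to gain control of the non-homogeneous parts of the Besov norm.

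By the definition of $\|\Omega\|_{\bes}$, I choose $(z_1, z_2)$ and $\varphi_0 \in \bes(\R)$ with $\|\varphi_0\|_{\bes(\R)} \leq 2\kappa_1$ and $\Omega \subset \{(z, \varphi_0(z))_{z_1 z_2}\}$. Consider first the case $\#(\Omega) \geq 2$. The points $x_0, y_0$ have $(z_1, z_2)$-coordinates $(z^0, \varphi_0(z^0))$ and $(z^1, \varphi_0(z^1))$, and by MVT there is $\xi$ with $\varphi_0'(\xi) = (\varphi_0(z^1) - \varphi_0(z^0))/(z^1 - z^0)$. Writing out the rigid motion relating $(u,v)$ to $(z_1, z_2)$ and using the hypothesis $|v(y_0) - v(x_0)|/|u(y_0) - u(x_0)| \leq \kappa_2$, the angle $\theta$ between the two systems is forced to satisfy $\theta = \arctan\varphi_0'(\xi) + O(\kappa_2)$. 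I then rotate $(z_1, z_2)$ by $\alpha := \arctan\varphi_0'(\xi)$ to obtain coordinates $(\bar z_1, \bar z_2)$ aligned with $(u, v)$ up to angle $O(\kappa_2)$; the graph of $\varphi_0$ becomes the graph of a new function $\bar\varphi$ satisfying
\[
\bar\varphi'(\bar z_1(z)) \; = \; \frac{\varphi_0'(z) - \varphi_0'(\xi)}{1 + \varphi_0'(z)\,\varphi_0'(\xi)}, \qquad \bar z_1(z) \; = \; z\cos\alpha + \varphi_0(z)\sin\alpha.
\]
The BET applied to $\varphi_0$ gives $|\varphi_0'(z) - \varphi_0'(\xi)| \lesssim \kappa_1$ on the relevant (bounded) range of $z$, and the identity $1 + \varphi_0'(\xi)^2 = \sec^2\alpha$ yields the crucial pointwise lower bound $d\bar z_1/dz \geq \tfrac12\sec\alpha \geq \tfrac12$. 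Together these control $|\bar\varphi'| \lesssim \kappa_1$ locally and, via a change-of-variables in the double integral defining $\|\bar\varphi\|_{\bes(\R)}$, also yield $\|\bar\varphi\|_{\bes(\R)} \lesssim \kappa_1$.

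A second application of the same rotation formula (this time with the small angle $\theta - \alpha = O(\kappa_2)$) realizes the graph of $\bar\varphi$ as the graph of a function $\tilde\varphi_0$ over $u$, with $\|\tilde\varphi_0\|_{\bes(\R)} \lesssim \kappa_1$ and $|\tilde\varphi_0'| \lesssim \kappa_1 + \kappa_2$ on a bounded neighborhood of $\mbox{proj}_u(\Omega)$. Combining $\tilde\varphi_0(u(x_0)) = v(x_0)$, $|v(x_0)| \leq \kappa_2$, this Lipschitz control, and $\mbox{diam}(\Omega) \lesssim 1$ produces the pointwise estimate $|\tilde\varphi_0(u)| \lesssim \kappa_1 + \kappa_2$ on such a neighborhood. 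I finish by defining $\tilde\varphi := \eta\tilde\varphi_0$, where $\eta$ is a smooth cutoff equal to $1$ on $\mbox{proj}_u(\Omega)$ and supported in a set of diameter $\lesssim 1$; the product rule then yields $\|\tilde\varphi\|_{\besfull(\R)} \lesssim \kappa_1 + \kappa_2$. The cases $\#(\Omega) \leq 1$ admit the direct ansatz $\tilde\varphi(u) = v(x_0) \eta(u - u(x_0))$ (or $\tilde\varphi \equiv 0$ if $\Omega$ is empty).

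The main obstacle will be the change-of-variables calculation supporting the semi-norm estimate $\|\bar\varphi\|_{\bes(\R)} \lesssim \|\varphi_0\|_{\bes(\R)}$: the Jacobian lower bound $d\bar z_1/dz \geq \tfrac12\sec\alpha$ must cancel exactly the powers of $\sec\alpha$ that appear in the transformed integrand, so that the resulting estimate is independent of $\alpha$ --- a necessity because $\alpha$ is close to $\pi/2$ whenever $\varphi_0'(\xi)$ is large. A secondary issue is the implicit-function argument justifying the existence of $\tilde\varphi_0$ as a function of $u$ over a neighborhood of $\mbox{proj}_u(\Omega)$, which again hinges on the same quantitative control of $d\bar z_1/dz$.
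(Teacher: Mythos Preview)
Your proposal is correct and follows essentially the same route as the paper's sketch: realize $\Omega$ on the graph of a Besov function in some auxiliary coordinates, rewrite that graph over the $(u,v)$ system via the rotation formula, check that the change-of-variables preserves the Besov semi-norm (this is your $\sec\alpha$ cancellation), verify the slope and value are $\lesssim \kappa_1+\kappa_2$ near $\Omega$ from the hypotheses, and finally multiply by a unit-scale cutoff. Your decomposition into two rotations (first by $\alpha=\arctan\varphi_0'(\xi)$, then by the residual $O(\kappa_2)$) is just a clean way of organizing the same calculation the paper performs in one step; the paper's condition ``$\varphi_1'$ uniformly bounded'' is exactly what your first rotation arranges.

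One small point to tidy: your BET bound $|\varphi_0'(z)-\varphi_0'(\xi)|\lesssim\kappa_1$ and hence the Jacobian lower bound $d\bar z_1/dz\geq\tfrac12\sec\alpha$ only hold on a bounded $z$-interval, so $\bar\varphi$ is a priori only defined there, whereas $\|\bar\varphi\|_{\bes(\R)}$ is a global quantity. The standard fix (which the paper also leaves implicit) is to first replace $\varphi_0$ by a function agreeing with it near $\mathrm{proj}_{z_1}\Omega$ and equal to the tangent line of slope $\varphi_0'(\xi)$ outside a slightly larger interval; this costs only a constant in the semi-norm and makes the rotated graph globally a function.
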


\textit{Remark on proof of Lemma \ref{nicecurve}:} The proof is tedious but straightforward. Here we give a sketch of the proof: One fixes a curve $\gamma = \{(s,\varphi(s))_{st} : s \in \R\}$ with $\Omega \subset \gamma$ and $\|\varphi\|_{\bes(\R)} \lesssim \kappa_1$, and then calculates the Besov semi-norm of $\varphi_1$ upon writing $\gamma=\{(u,\varphi_1(u))_{uv}: u \in \R\}$. We find that $\|\varphi_1\|_{\bes(\R)} \lesssim \|\varphi\|_{\bes(\R)} \lesssim \kappa_1$ as long as $\varphi'_1(u)$ remains uniformly bounded by an absolute constant. Now, from the hypotheses relating $\Omega$ to $(u,v)$, $\varphi_1$ must have slope and value smaller than $\kappa_1 + \kappa_2$ in a fixed neighborhood surrounding $\Omega$. After choosing an appropriate unit cutoff function $\theta$ and setting $\tilde{\varphi} = \varphi_1 \theta$, we find that $\tilde{\varphi}$ has uniformly bounded slope and value, bounded Besov semi-norm, support contained within a unit-scale interval, and still interpolates the set $\Omega$. The result follows.

The well known implicit function theorem for the continuous class of spaces $C^m(\R^n)$ has a quantitative analogue for Sobolev functions. We would like to thank Kevin Luli for helpful ideas concerning the proof of this lemma. We first require a form of the inverse function theorem. For a $2\times2$ matrix $M$, $|M|$ will denote the maximal absolute value of the entries of $M$.

\begin{lem}[Inverse function theorem]
\label{ift}
Let $\Omega \subset \R^2$ be a domain, and $\Phi = (\Phi^1,\Phi^2) : \Omega \rightarrow \R^2$ with $\Phi^i \in L^{2,p}(\Omega)$ be given. Suppose that $\Phi$ is injective, and that $|\nabla \Phi(x)|$, $|(\nabla \Phi(x))^{-1}| \lesssim 1$ for all $x \in \Omega$. Then $\Phi^{-1} \in L^{2,p}(\Phi(\Omega))$ with $\|\Phi^{-1}\|_{L^{2,p}(\Phi(\Omega))} \lesssim \|\Phi\|_{L^{2,p}(\Omega)}$.
\end{lem}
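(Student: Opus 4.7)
The plan is to express $D^2\Phi^{-1}$ pointwise (a.e.) in terms of $D^2\Phi$ via two applications of the chain rule, and then transfer the resulting $L^p$ bound to $\Phi(\Omega)$ by a bi-Lipschitz change of variables. The first step is to upgrade the hypotheses to classical regularity of $\Phi^{-1}$. Since $p>2$ and $\Omega\subset\R^2$, Sobolev embedding (Lemma \ref{sobimbthm}) gives $\Phi\in C^{1,1-2/p}(\Omega)$, so $D\Phi$ is continuous. Combined with injectivity and the uniform bound $|(D\Phi)^{-1}|\lesssim 1$, the classical inverse function theorem applied locally shows that $\Phi(\Omega)$ is open and $\Phi^{-1}\in C^1(\Phi(\Omega))$ with
$$
D\Phi^{-1}(y)=\bigl[D\Phi(\Phi^{-1}(y))\bigr]^{-1}.
$$
The bounds $|D\Phi|,|(D\Phi)^{-1}|\lesssim 1$ then make $\Phi$ a bi-Lipschitz homeomorphism between $\Omega$ and $\Phi(\Omega)$, and force $|\det D\Phi(x)|\approx 1$ uniformly.

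Next I would argue that the matrix-valued map $A(x):=(D\Phi(x))^{-1}$ lies in $L^{1,p}(\Omega)$. The entries of $A$ are rational functions of the entries of $D\Phi$ with denominator $\det D\Phi$; since $|D\Phi|$ is bounded and $|\det D\Phi|$ is bounded away from zero, these rational functions restrict to globally Lipschitz maps on the range of $D\Phi$. The standard chain rule for Sobolev functions thus yields, a.e.,
$$
\nabla A_{ij}(x)=\sum_{k,l}R_{ij}^{kl}\bigl(D\Phi(x)\bigr)\,\nabla(D\Phi)_{kl}(x),
$$
with bounded coefficients $R_{ij}^{kl}$, so $|\nabla A(x)|\lesssim |\nabla^2\Phi(x)|$ a.e. on $\Omega$.

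Now I would use that $\Phi^{-1}:\Phi(\Omega)\to\Omega$ is bi-Lipschitz; composition of an $L^{1,p}(\Omega)$ function with a bi-Lipschitz map produces an $L^{1,p}(\Phi(\Omega))$ function (a standard Sobolev fact). Applying this to $A$ gives $D\Phi^{-1}(y)=A(\Phi^{-1}(y))\in L^{1,p}(\Phi(\Omega))$, with the a.e.\ identity
$$
\nabla\bigl(D\Phi^{-1}\bigr)(y)=\bigl[\nabla A\bigr]\bigl(\Phi^{-1}(y)\bigr)\cdot D\Phi^{-1}(y),
$$
and consequently, using $|D\Phi^{-1}|\lesssim 1$,
$$
|D^2\Phi^{-1}(y)|\lesssim |\nabla^2\Phi(\Phi^{-1}(y))|\quad\text{a.e. on }\Phi(\Omega).
$$
Finally the bi-Lipschitz change of variables together with $|\det D\Phi(x)|\lesssim 1$ gives
$$
\int_{\Phi(\Omega)}|D^2\Phi^{-1}(y)|^p\,dy\lesssim\int_\Omega|\nabla^2\Phi(x)|^p|\det D\Phi(x)|\,dx\lesssim \|\Phi\|_{L^{2,p}(\Omega)}^p,
$$
which is the claim.

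The main obstacle is the justification of the two chain rules at the level of weak derivatives: the calculation for $A=(D\Phi)^{-1}$ requires that the rational coordinate functions act as Lipschitz superposition operators on $L^{1,p}$ (which works because $D\Phi$ takes values in a bounded set on which $\det\neq 0$), and the composition with $\Phi^{-1}$ requires the Sobolev-chain-rule for bi-Lipschitz homeomorphisms (which holds since $\Phi^{-1}$ has bounded gradient and $|\det D\Phi|\approx 1$). Both facts are standard but need to be invoked carefully; once they are in place the remainder is a direct change-of-variables computation.
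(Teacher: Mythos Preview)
Your argument is correct and follows essentially the same approach as the paper's sketch: differentiate the relation between $\Phi$ and $\Phi^{-1}$ twice to obtain the pointwise bound $|\nabla^2\Phi^{-1}(y)|\lesssim|\nabla^2\Phi(\Phi^{-1}(y))|$, then change variables using the bounded Jacobian. The only difference is presentational---the paper differentiates $\Phi^{-1}\circ\Phi=\mathrm{id}$ twice directly, whereas you differentiate $D\Phi^{-1}=(D\Phi\circ\Phi^{-1})^{-1}$ once after first establishing $C^1$ regularity---and you are more explicit about the Sobolev-level justifications (superposition by Lipschitz rational maps, composition with bi-Lipschitz homeomorphisms) that the paper leaves implicit.
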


\textit{Remark on proof of Lemma \ref{ift}:} Upon differentiating the identity $\Phi^{-1} \circ \Phi(x) = x$ twice, solving the resulting equation for the Hessian of $\Phi^{-1}$, and using the boundedness of $|(\nabla \Phi)^{-1}|$, we find that $|\nabla^2 \Phi^{-1}(x)| \lesssim |\nabla^2 \Phi(\tilde{x})|$ for $\tilde{x} = \Phi^{-1}(x)$. After raising both sides to the $p$'th power, integrating, and using the boundedness of the Jacobian of $\Phi$, the conclusions of Lemma \ref{ift} follow.

The implicit function theorem and proceeding lemma explain a certain duality between Sobolev functions and Besov curves: Besov curves are precisely the level sets of Sobolev functions.

\begin{lem}[Implicit function theorem]
\label{impft}
Let $Q = [-1/2,1/2]^2$, and fix a point $x_0 \in 0.9 Q$. There exists a sufficiently small universal constant $c>0$ so that the following holds: Let $h \in L^{2,p}(Q)$ satisfy $\|h\|_{L^{2,p}(Q)} \leq c$ and $|\nabla h(x_0)| \geq 1$. Consider $\gamma = \{x \in 0.9 Q: h(x) = 0\}$. Then $\|\gamma\|_{\bes} \lesssim \|h\|_{L^{2,p}(Q)}$. Conversely, suppose that $\gamma \subset 0.9 Q$ is given with $\|\gamma\|_{\bes} \leq c$. Then there exists a function $h \in L^{2,p}(Q)$ satisfying $h|_{\gamma} = 0$, $\|h\|_{L^{2,p}(Q)} \lesssim \|\gamma\|_{\bes}$, and $|\nabla h(x_0)| \geq 1$.
\end{lem}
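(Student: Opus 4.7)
The plan is to handle the two directions of the equivalence separately, both via the Inverse Function Theorem (Lemma \ref{ift}) combined with the Trace/Extension Theorem (Proposition \ref{trthm}).

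For the forward direction, I would first choose Euclidean coordinates $(u,v)$ so that $\nabla h(x_0)$ points in the positive $v$-direction, which gives $\partial_v h(x_0) \geq 1$ and $\partial_u h(x_0) = 0$. Lemma \ref{sobimbthm} then yields $|\nabla h(x) - \nabla h(x_0)| \lesssim \|h\|_{L^{2,p}(Q)} \leq Cc$ for all $x \in Q$, so for $c$ small enough I have $\partial_v h \geq 1/2$ throughout $Q$. The map $\Phi(u,v) = (u, h(u,v))$ is then injective (each vertical slice is strictly monotone) with $|\nabla \Phi|, |(\nabla \Phi)^{-1}| \lesssim 1$. Lemma \ref{ift} produces $\Phi^{-1} \in L^{2,p}(\Phi(Q))$ with norm $\lesssim \|h\|_{L^{2,p}(Q)}$. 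Writing $\Phi^{-1}(u,v) = (u, \Psi(u,v))$, the equivalence $h(u,v) = 0 \Leftrightarrow v = \Psi(u,0)$ gives $\gamma \subset \{(u, \Psi(u,0)) : u \in \mathrm{proj}_u \gamma\}$. Finally, I would extend $\Psi$ from $\Phi(Q)$ to $\tilde\Psi \in L^{2,p}(\R^2)$ by a standard Sobolev extension operator on the Lipschitz domain $\Phi(Q)$ and invoke the trace half of Proposition \ref{trthm} to conclude $\|\tilde\Psi(\cdot, 0)\|_{\bes(\R)} \lesssim \|h\|_{L^{2,p}(Q)}$; this bounds $\|\gamma\|_{\bes}$ by the same.

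For the converse, suppose $\gamma \subset 0.9Q$ with $\|\gamma\|_{\bes} \leq c$. The degenerate cases $\#\gamma \leq 1$ are trivial (take $h$ affine). Otherwise, pick two distinct points $x^*_1, x^*_2 \in \gamma$ and rotate to Euclidean coordinates $(u,v)$ in which $v(x^*_1) = v(x^*_2) = 0$; the hypotheses of Lemma \ref{nicecurve} then hold with $\kappa_2 = 0$, yielding $\tilde\varphi \in \besfull(\R)$ with $\|\tilde\varphi\|_{\besfull(\R)} \lesssim \|\gamma\|_{\bes}$ and $\gamma \subset \{(u, \tilde\varphi(u))\}$. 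Proposition \ref{trthm} supplies $\Psi = T_1 \tilde\varphi \in W^{2,p}(\R^2)$ with $\Psi(u,0) = \tilde\varphi(u)$ and $\|\Psi\|_{W^{2,p}} \lesssim \|\tilde\varphi\|_{\besfull(\R)}$; since $p>2$, the 2D Sobolev embedding forces $\|\Psi\|_{C^1(\R^2)}$ to be small. I would then set $\Phi(u,v) = (u, v + \Psi(u,v))$; smallness of $\|\nabla \Psi\|_\infty$ ensures $\Phi$ is injective with bounded $|\nabla \Phi|$ and $|(\nabla \Phi)^{-1}|$, so Lemma \ref{ift} gives $\Phi^{-1} \in L^{2,p}$ with norm $\lesssim \|\Phi\|_{L^{2,p}} = \|\Psi\|_{L^{2,p}}$. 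Taking $h$ to be a suitable fixed multiple of the second component of $\Phi^{-1}$, one checks: $h|_\gamma = 0$ because $\Phi$ carries $\{v=0\}$ onto $\mathrm{graph}(\tilde\varphi) \supset \gamma$; $|\nabla h(x_0)| \geq 1$ because $\nabla \Phi^{-1}$ is close to the identity; and $\|h\|_{L^{2,p}(Q)} \lesssim \|\gamma\|_{\bes}$ by chasing the chain of estimates back through $\Phi$, $\Psi$, $\tilde\varphi$.

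The main technical burden here is bookkeeping of domains rather than any single hard estimate. In the forward direction, extending $\Psi$ off the bilipschitz-square $\Phi(Q)$ to $\R^2$ in order to invoke Proposition \ref{trthm} as stated requires a Sobolev extension for Lipschitz domains. In the converse, one must verify that the rotated coordinates adapted to $\gamma$ still place $x_0$ and $0.9Q$ in a region where $\Phi^{-1}$ satisfies the claimed bounds; this is easy since the rotation is through bounded angle, making rotated and axis-aligned squares comparable. Once these bookkeeping issues are dispatched, the remaining computations (chain rule through $\Phi^{-1}$ and checking $|\nabla h(x_0)|$) are routine.
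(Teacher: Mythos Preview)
Your approach is essentially the paper's, and the converse direction is virtually identical. In the forward direction you deviate in one technical maneuver and leave one small gap.

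The deviation: you define $\Phi(u,v)=(u,h(u,v))$ on $Q$, invert on $\Phi(Q)$, and then invoke a Sobolev extension operator for the Lipschitz domain $\Phi(Q)$ to pass $\Psi$ to $\R^2$ before taking the trace. The paper instead extends $h$ itself to all of $\R^2$ at the outset, setting $\tilde h=\theta h+(1-\theta)J_{x_0}h$ for a cutoff $\theta\in C_c^\infty(Q)$ with $\theta\equiv 1$ on $0.9Q$; then $\tilde h$ agrees with $h$ on $0.9Q$ (hence has the same zero set there), satisfies $\|\tilde h\|_{L^{2,p}(\R^2)}\lesssim\|h\|_{L^{2,p}(Q)}$, and has $\nabla\tilde h$ globally within $1/10$ of $\nabla h(x_0)$. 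Now $\Phi(u,v)=(u,\tilde h(u,v))$ is a global bijection of $\R^2$, Lemma~\ref{ift} applies on $\R^2$, and Proposition~\ref{trthm} applies directly to $(\Phi^{-1})^2$ with no domain bookkeeping. Your route is correct but imports the Stein extension theorem, which the paper never states; the cutoff-to-linearization trick is more elementary and eliminates precisely the ``bookkeeping of domains'' you flag as the main burden.

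The gap: your claim $|\nabla\Phi|\lesssim 1$ requires $|\nabla h|\lesssim 1$ on $Q$, but the hypothesis gives only $|\nabla h(x_0)|\geq 1$, no upper bound. The paper handles this at the very start by replacing $h$ with $h/|\nabla h(x_0)|$, which only decreases $\|h\|_{L^{2,p}(Q)}$ and leaves the zero set unchanged. Without this normalization Lemma~\ref{ift} is not applicable as you invoke it; the fix is one line, but it should be there.
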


\begin{proof}
The lemma is trivial if $\#(\gamma) \leq 1$, and so we assume without loss that $\#(\gamma) \geq 2$. We now establish the first half of the lemma: Upon rescaling by a factor of $1/|\nabla h(x_0)|$ (a factor less than $1$), without loss the given $h \in L^{2,p}(Q)$ satisfies: $(1)$ $\|h\|_{L^{2,p}(Q)} \leq c$, and $(2)$ $|\nabla h(x_0)| = 1$. We now fix $A = \|h\|_{L^{2,p}(Q)} \leq c$, with $c$ a sufficiently small universal constant. We now proceed to bound the Besov semi-norm of the zero set of $h$ by $A$.

Let $\theta \in C_c^\infty(Q)$ satisfy the following properties: $(1)$ $\theta \equiv 1$ on $0.9 Q$, and $(2)$ $|\partial^\alpha \theta| \lesssim 1$ for all $|\alpha| \leq 2$. We set $L_0 = J_{x_0} h$, and through the SET find that $\|L_0 - h\|_{W^{2,p}(Q)} \lesssim A$. Define $\tilde{h} = \theta h + (1- \theta) L_0 = L_0 + \theta (h - L_0)$, for which: $(1)$ $\tilde{h}|_{0.9Q} = h|_{0.9Q}$, $(2)$ $\|\tilde{h}\|_{L^{2,p}(\R^2)} \lesssim A$, and $(3)$ $|\nabla \tilde{h}(x) - \nabla \tilde{h}(x_0)| \leq \frac{1}{10}$ for all $x \in \R^2$. Where $(3)$ follows for $x \in Q$ by the SET, and for $x \notin Q$ by the fact that $\nabla \tilde{h}(x) = \nabla h(x_0) = \nabla \tilde{h}(x_0)$ by construction.

Denote $e_2 = \nabla \tilde{h}(x_0)$, and let $e_1 = e_2^\perp$ be a unit vector perpendicular to $e_2$. Define Euclidean coordinates by $(u,v)_{uv} = x_0 + ue_1 + ve_2 \in \R^2$. We now consider the mapping $\Phi : \R^2 \rightarrow \R^2$ given by 
\begin{equation}
\Phi(u,v) = (u,\tilde{h}(u,v))_{uv} 
\label{formofphi} 
\end{equation}
We claim that $\Phi$ is invertible: From $P2$ of $\tilde{h}$, $\|\Phi\|_{L^{2,p}(\R^2)} = \|\tilde{h}\|_{L^{2,p}(\R^2)} \lesssim A$. While, the definition of $\Phi$ and $P3$ of $\tilde{h}$ imply
\begin{equation}
|\nabla \Phi(x) - Id| = |\nabla \Phi(x) - \nabla \Phi(x_0)| = |\nabla \tilde{h}(x) - \nabla \tilde{h}(x_0)| \leq \frac{1}{10},
\label{closeid}
\end{equation}
for all $x \in \R^2$. Thus, $|\nabla \Phi(x)|, |(\nabla \Phi(x))^{-1}| \leq 10$ for all $x \in \R^2$. Moreover, from \eqref{closeid}, and the form of $\Phi$ in \eqref{formofphi}, it follows that $\Phi$ is bijective onto $\R^2$. Having verified the hypotheses of the inverse function theorem, we find  $\|\Phi^{-1}\|_{L^{2,p}(\R^2)} \lesssim A$. Also, from $P1$ of $\tilde{h}$,
\begin{align*}
\gamma & = \{x \in 0.9 Q: \tilde{h}(x)=0\} \subset \{x \in \R^2 : \tilde{h}(x)=0 \} = \{(u,v) \in \R^2: \Phi(u,v) = (u,0) \} \\
& = \{\Phi^{-1}(t,0): t \in \R \} = \{(t,\varphi(t)) : t \in \R \},
\end{align*}
with $\varphi(t) = (\Phi^{-1})^2(t,0)$. Thus, $\|\varphi\|_{\bes(\R)} \lesssim \|(\Phi^{-1})^2\|_{L^{2,p}(\R^2)} \lesssim A$. By the definition of Besov semi-norm of a set, we find $\|\gamma\|_{\bes} \lesssim A$, and the first half of the lemma is proven.

For the second half of the lemma: Let $\gamma$ be given with $\gamma \subset 0.9Q$ and $\|\gamma\|_{\bes} \leq c$ for $c$ sufficiently small. Let $A = \|\gamma\|_{\bes}$. Since $\#(\gamma) \geq 2$, we may fix distinct $x_0,y_0 \in \gamma$, and choose Euclidean coordinates $(u,v)$ so that $v(x_0) = v(y_0) = 0$ . Therefore, $(u,v)$ satisfy the hypothesis of Lemma \ref{nicecurve}. By choosing $c$ sufficiently small, Lemma \ref{nicecurve} applies and we find $\varphi \in \besfull(\R)$ with $\gamma \subset \{(u,\varphi(u)) : u \in \R \}$ and $\|\varphi\|_{\besfull(\R)} \lesssim A$.

Through application of Proposition \ref{trthm}, there exists $\phi \in W^{2,p}(\R^2)$ satisfying: $(1)$ $\phi(u,0) = \varphi(u)$ for $u \in \R$, and $(2)$ $\|\phi\|_{W^{2,p}(\R^2)} \lesssim A$. From the SET, $|\nabla \phi(x)| \lesssim \|\phi\|_{W^{2,p}(\R^2)} \lesssim A$ for any $x \in \R^2$. By choosing $c$ sufficiently small, $A$ can be taken small enough so that $|\nabla \phi(x)| \leq \frac{1}{10}$ for all $x \in \R^2$. Define $\Phi : \R^2 \rightarrow \R^2$ by $\Phi(u,v) = (u, v + \phi(u,v))$, for which
\begin{enumerate}
\item $\{\Phi(u,0):u \in \R\} = \{(u, \phi(u,0)):u \in \R\} = \{(u,\varphi(u)): u \in \R\} \supset \gamma$;
\item $|\nabla \Phi - Id| \leq \frac{1}{10}$, and thus
\item $|\nabla \Phi(x)|, \; |(\nabla \Phi(x))^{-1}| \leq 10$ for all $x \in \R^2$;
\item $\|\Phi\|_{L^{2,p}(\R^2)} = \|\phi\|_{L^{2,p}(\R^2)} \lesssim A$.
\end{enumerate}
As before, these properties and the definition of $\Phi$ imply that $\Phi$ is bijective. Thus, from the inverse function theorem we also find $(5)$ $\|\Phi^{-1}\|_{L^{2,p}(\R^2)} \lesssim A$. 

Let $h_1(u,v) = (\Phi^{-1})^2(u,v)$. $P1$ of $\Phi$ implies $h_1(x) = 0$ for any $x \in \gamma$, $P2$ of $\Phi$ implies $|\nabla h_1(x_0)| \geq \frac{1}{2}$, and $P5$ of $\Phi$ implies $\|h_1\|_{L^{2,p}(Q)} \leq \|\Phi^{-1}\|_{L^{2,p}(Q)} \lesssim A$. Thus, the function $h = 2h_1$ satisfies the desired properties, and the second half of the lemma has been proven.
\end{proof}

From the proof of the second half of the implicit function theorem, the map $\Phi^{-1}:\R^2 \rightarrow \R^2$ was shown to satisfy certain properties which we list in the following lemma (notice that $\Phi$ below corresponds to $\Phi^{-1}$ from the proof).

\begin{lem}[Straightening lemma]
Let $Q=[-1/2,1/2]^2$. There exists a sufficiently small universal constant $c>0$ so that the following holds: Let $\gamma \subset 0.9 Q$ be given with $\|\gamma\|_{\bes} \leq c$. There exist Euclidean coordinates $(u,v)$ on $\R^2$ and a diffeomorphism $\Phi : \R^2 \rightarrow \R^2$ so that
\begin{align*}
&\Phi(u,v) = (u,0) \; \mbox{for all} \; (u,v) \in \gamma, \\
&\|\Phi\|_{L^{2,p}(\R^2)} \lesssim \|\gamma\|_{\bes}, \\
&\|\Phi^{-1}\|_{L^{2,p}(\R^2)} \lesssim  \|\gamma\|_{\bes}, \mbox{and} \\
&|\nabla \Phi(x)|, |\nabla \Phi^{-1}(x)| \leq 10 \; \mbox{for all} \; x \in \R^2. \\
\end{align*}
\label{strlem}
\end{lem}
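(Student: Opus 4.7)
The plan is to simply extract, rename, and repackage the construction that already appeared in the second half of the proof of Lemma \ref{impft}. There the author built a map (call it $\Psi$ to avoid clashing with the $\Phi$ in the statement here) of the form $\Psi(u,v) = (u, v + \phi(u,v))$ which turned the horizontal axis into a curve containing $\gamma$; the remark following Lemma \ref{impft} notes that $\Phi$ in the Straightening lemma is precisely $\Psi^{-1}$ from that construction. So I would re-run the construction cleanly and then invert at the end.

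Step 1 (graph reduction). The case $\#(\gamma)\le 1$ is handled by an affine map, so fix distinct $x_0, y_0 \in \gamma$ and choose Euclidean coordinates $(u,v)$ with $v(x_0) = v(y_0) = 0$. With $c$ small enough, the hypothesis of Lemma \ref{nicecurve} is met (with $\kappa_1 = \|\gamma\|_{\bes}$ and $\kappa_2 = 0$), producing $\varphi \in \besfull(\R)$ with $\gamma \subset \{(u,\varphi(u))_{uv}\}$ and $\|\varphi\|_{\besfull(\R)} \lesssim \|\gamma\|_{\bes} =: A$.

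Step 2 (Sobolev extension of the graph). By the trace/extension theorem (Proposition \ref{trthm}) there exists $\phi \in W^{2,p}(\R^2)$ with $\phi(u,0) = \varphi(u)$ and $\|\phi\|_{W^{2,p}(\R^2)} \lesssim A$. The SET (Lemma \ref{sobimbthm}) gives $\|\nabla \phi\|_{L^\infty} \lesssim \|\phi\|_{W^{2,p}(\R^2)} \lesssim A$, so by choosing $c$ sufficiently small I may assume $|\nabla \phi(x)| \le \tfrac{1}{10}$ for every $x \in \R^2$. Now define
$$\Psi(u,v) = (u, v + \phi(u,v)).$$
Then $|\nabla \Psi - \mathrm{Id}| \le \tfrac{1}{10}$ everywhere, so $\Psi$ is globally bijective with $|\nabla \Psi|, |(\nabla \Psi)^{-1}| \le 10$, and $\|\Psi\|_{L^{2,p}(\R^2)} = \|\phi\|_{L^{2,p}(\R^2)} \lesssim A$.

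Step 3 (invert and verify). The inverse function theorem (Lemma \ref{ift}) applied to $\Psi$ yields $\Psi^{-1} \in L^{2,p}(\R^2)$ with $\|\Psi^{-1}\|_{L^{2,p}(\R^2)} \lesssim A$. Set $\Phi := \Psi^{-1}$. For $(u,v) \in \gamma$ we have $v = \varphi(u) = \phi(u,0)$, so $\Psi(u,0) = (u, \varphi(u)) = (u,v)$, giving $\Phi(u,v) = (u,0)$ as required. The Jacobian bounds on $\Phi = \Psi^{-1}$ follow from those on $\Psi$, and the two $L^{2,p}$ bounds are the ones just recorded (for $\Psi^{-1}$ directly, and for $\Psi$ noting that $\|\nabla^2 \Phi^{-1}(x)\| \lesssim \|\nabla^2 \Phi(\tilde{x})\|$ pointwise as in the proof of Lemma \ref{ift}).

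There is essentially no new obstacle here: the only substantive steps are invoking Lemma \ref{nicecurve} to realize $\gamma$ as a graph over the correct axis, invoking Proposition \ref{trthm} to extend the graph to a plane function, and invoking Lemma \ref{ift} to control $\Psi^{-1}$. The mild care needed is in the choice of coordinates (so that Lemma \ref{nicecurve} applies with $\kappa_2 = 0$) and in taking $c$ small enough that $|\nabla \phi| \le \tfrac{1}{10}$, which is what guarantees the global bijectivity and uniform Jacobian bounds on $\Psi$.
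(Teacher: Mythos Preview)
Your proposal is correct and is essentially identical to the paper's approach: the paper explicitly states that the Straightening Lemma is just a repackaging of the construction in the second half of the proof of Lemma~\ref{impft}, with the $\Phi$ here being the inverse of the map built there, which is exactly what you have done (your $\Psi$ is the paper's $\Phi$ from that proof, and your $\Phi=\Psi^{-1}$). The only cosmetic difference is that you made the trivial case $\#(\gamma)\le 1$ explicit.
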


Such diffeomorphisms preserve the inhomogeneous $W^{2,p}(\R^2)$ Sobolev norm, as stated in the following result.

\begin{lem}
Suppose that $F \in W^{2,p}(\R^2)$, and $\Phi: \R^2 \rightarrow \R^2$ is a diffeomorphism with $\|\Phi\|_{L^{2,p}(\R^2)} \lesssim 1$, and $|\nabla \Phi|$, $|\nabla \Phi^{-1}| \lesssim 1$. Then $F \circ \Phi \in W^{2,p}(\R^2)$ with $\|F \circ \Phi\|_{W^{2,p}(\R^2)} \approx \|F\|_{W^{2,p}(\R^2)}$.
\label{sobdiff}
\end{lem}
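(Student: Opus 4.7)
The plan is to compute $\nabla(F \circ \Phi)$ and $\nabla^2(F\circ\Phi)$ by the chain rule, estimate each term in $L^p$ using change of variables together with the bounds on $|\nabla\Phi|$ and $|\nabla\Phi^{-1}|$, and handle the one dangerous lower-order term via Sobolev embedding. Then apply the same argument to $\Phi^{-1}$ to get the reverse inequality.

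First I would record the Jacobian estimate: the hypotheses $|\nabla\Phi|, |\nabla\Phi^{-1}|\lesssim 1$ give $|\det\nabla\Phi(x)| \approx 1$ pointwise. Hence $\int g(\Phi(x))\,dx \approx \int g(y)\,dy$ for any non-negative measurable $g$. By the chain rule,
\begin{equation*}
\nabla(F\circ\Phi)(x) = (\nabla F)(\Phi(x)) \cdot \nabla\Phi(x),
\end{equation*}
\begin{equation*}
\nabla^2(F\circ\Phi)(x) = (\nabla^2 F)(\Phi(x)) \cdot (\nabla\Phi(x) \otimes \nabla\Phi(x)) + (\nabla F)(\Phi(x)) \cdot \nabla^2\Phi(x).
\end{equation*}
The $L^p$ terms $\|F\circ\Phi\|_{L^p}$ and $\|\nabla(F\circ\Phi)\|_{L^p}$ are now immediate: use $|\nabla\Phi|\lesssim 1$ and change of variables to bound them by $\|F\|_{L^p}$ and $\|\nabla F\|_{L^p}$ respectively.

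The only genuinely interesting term is the second summand in $\nabla^2(F\circ\Phi)$, which involves $\nabla F$ multiplied against $\nabla^2\Phi$. Here I would invoke the Sobolev embedding $W^{2,p}(\R^2) \hookrightarrow \dot{C}^{1,1-2/p}(\R^2)$ (valid because $p>2$) to conclude $\|\nabla F\|_{L^\infty(\R^2)} \lesssim \|F\|_{W^{2,p}(\R^2)}$. Then
\begin{equation*}
\int_{\R^2} |(\nabla F)(\Phi(x))|^p |\nabla^2\Phi(x)|^p\, dx \leq \|\nabla F\|_{L^\infty}^p \|\Phi\|_{L^{2,p}(\R^2)}^p \lesssim \|F\|_{W^{2,p}(\R^2)}^p,
\end{equation*}
using the hypothesis $\|\Phi\|_{L^{2,p}(\R^2)}\lesssim 1$. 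The first summand in $\nabla^2(F\circ\Phi)$ is bounded by $|(\nabla^2 F)(\Phi(x))|$ up to a constant (using $|\nabla\Phi|\lesssim 1$), whose $L^p$ norm is $\approx \|\nabla^2 F\|_{L^p}$ by the Jacobian estimate. Combining all three gives $\|F\circ\Phi\|_{W^{2,p}(\R^2)} \lesssim \|F\|_{W^{2,p}(\R^2)}$.

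For the reverse inequality, the one piece of data missing from the hypotheses is a semi-norm bound on $\Phi^{-1}$, but Lemma \ref{ift} (inverse function theorem) applied to $\Phi$ on $\R^2$ yields $\|\Phi^{-1}\|_{L^{2,p}(\R^2)} \lesssim \|\Phi\|_{L^{2,p}(\R^2)} \lesssim 1$. Together with $|\nabla\Phi^{-1}|, |\nabla(\Phi^{-1})^{-1}| = |\nabla\Phi^{-1}|, |\nabla\Phi| \lesssim 1$, the map $\Phi^{-1}$ satisfies exactly the hypotheses of the lemma, so writing $F = (F\circ\Phi)\circ\Phi^{-1}$ and applying the estimate already proven gives $\|F\|_{W^{2,p}(\R^2)} \lesssim \|F\circ\Phi\|_{W^{2,p}(\R^2)}$. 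The main (and essentially only non-routine) step is the use of Sobolev embedding to pull $\nabla F$ out in $L^\infty$, which is why the assumption $p>2$ is crucial; everything else is chain rule and change of variables.
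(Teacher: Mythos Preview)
Your proof is correct and follows essentially the same approach as the paper: chain rule, Sobolev embedding to pull $\nabla F$ out in $L^\infty$ for the cross term $(\nabla F)\circ\Phi\cdot\nabla^2\Phi$, change of variables with bounded Jacobian for the remaining terms, and the inverse function theorem (Lemma~\ref{ift}) to verify $\Phi^{-1}$ satisfies the same hypotheses for the reverse inequality. The paper's argument is slightly terser but matches yours step for step.
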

\begin{proof}
We compute the second partials of $F \circ \Phi = F(\Phi^1,\Phi^2)$, to find that
\begin{equation}
\partial_{ij} (F \circ \Phi) = \sum_{k,l \in \{1,2\}} c_{kl} \partial_i \Phi^k \; \partial_j \Phi^l \; \partial_{kl} F \circ \Phi + \sum_{k \in \{1,2\}} \partial_{ij} \Phi^k \; \partial_k F \circ \Phi,
\label{hess1}
\end{equation}
with $c_{kl} \in \R$ independent of $F$ and $\Phi$. The SET implies that $\|\nabla F\|_{L^{\infty}(\R^2)} \lesssim \|F\|_{W^{2,p}(\R^2)}$. Raising both sides to the $p$'th power, integrating, and applying this fact along with the hypotheses on $\Phi$ yields:
\begin{equation*}
\|\nabla^2(F \circ \Phi)\|^p_{L^{p}(\R^2)} \lesssim \|(\nabla^2 F) \circ \Phi \|^p_{L^p(\R^2)} + \|F\|^p_{W^{2,p}(\R^2)}.
\end{equation*}
After changing variables using $\tilde{x} = \Phi(x)$, and noting that Jacobian of this coordinate change is bounded thanks to the assumption that $|(\nabla \Phi)^{-1}| \lesssim 1$, we find that $\|\nabla^2 (F \circ \Phi)\|_{L^p(\R^2)} \lesssim \|F\|_{W^{2,p}(\R^2)}$. In the same way we can bound the lower order derivatives of $F \circ \Phi$, and so $\|F \circ \Phi\|_{W^{2,p}(\R^2)} \lesssim \|F\|_{W^{2,p}(\R^2)}$. Finally, the assumptions on $\Phi$ imply through the Inverse Function Theorem that $\Phi^{-1}$ satisfies the same, and thus the above argument shows that $\|F\|_{W^{2,p}(\R^2)} \lesssim |F \circ \Phi\|_{W^{2,p}(\R^2)}$.
\end{proof}

In this paper we reduce a two dimensional Sobolev extension problem to a family of one dimensional Besov extension problems, which can be solved thanks to the next proposition.

\begin{prop}[1D linear Besov extensions]
Suppose that $E_1 \subset \R$ is finite, with $\mbox{diam}(E_1) \lesssim 1$, and $g:E_1 \rightarrow \R$ is given. There exists a bounded linear extension operator $T_b: \besfull(\R)|_{E_1} \rightarrow \besfull(\R)$, and linear functionals $\{\lambda_i(g)\}_{i=1}^{N_0}$ with $N_0 \lesssim (\#E_1)^2$ so that
\begin{enumerate}
\item $T_b g |_{E_1} = g$
\item $\|T_b g \|^p_{\besfull(\R)} \approx \|g\|^p_{\besfull(\R)|_{E_1}}\approx \sum_{i} |\lambda_i(g)|^p$
\end{enumerate}
\label{lbe}
\end{prop}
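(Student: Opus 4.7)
My plan is to construct the extension $T_b g$ explicitly as a piecewise-cubic Hermite interpolant with data-linear slopes. After rescaling so that $\mbox{diam}(E_1)\approx 1$ I label $E_1=\{x_1<\cdots<x_N\}$ and set $h_i = x_{i+1}-x_i$. To each node $x_i$ I assign a slope $s_i$ which is a specific linear functional of $g$ (to be tuned below; a natural first guess is a weighted combination of nearby secants $(g(x_j)-g(x_i))/(x_j-x_i)$). On each interval $[x_i,x_{i+1}]$ I define $T_b g$ to be the unique cubic satisfying $T_b g(x_i)=g(x_i)$, $T_b g(x_{i+1})=g(x_{i+1})$, $(T_b g)'(x_i)=s_i$, $(T_b g)'(x_{i+1})=s_{i+1}$, and outside $[x_1,x_N]$ I take a bounded affine continuation multiplied by a unit-scale cutoff. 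The map $T_b : g \mapsto T_b g$ is linear by construction.

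For the upper bound $\|T_b g\|_{\besfull(\R)}^p \lesssim \sum_i|\lambda_i(g)|^p$, I would split $\|T_b g\|_{\bes}^p$ into near-diagonal contributions (both variables in the same or adjacent $[x_i,x_{i+1}]$) and off-diagonal contributions, and bound the $L^p$ norms of $T_b g$ and $(T_b g)'$ via boundedness and compact support. On each interval the derivative $(T_b g)'$ is quadratic with prescribed endpoint values $s_i,s_{i+1}$ and a leading term proportional to $(g(x_{i+1})-g(x_i)-\half(s_i+s_{i+1})h_i)/h_i^2$. The near-diagonal integrals can be computed in closed form and bounded by $p$-th powers of linear combinations of these four quantities; the off-diagonal integrals decay fast enough in $|x-y|^{-p}$ to be absorbed into the diagonal ones. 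Summing over all intervals yields an $\ell^p$-sum of $O(N)$ functionals in the pairs $(g(x_i), s_i)$, rising to $O(N^2)$ once the slopes $s_i$ are themselves expressed as linear combinations of $O(N)$ values of $g$.

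The matching lower bound is the main obstacle. For an arbitrary interpolant $\varphi \in \besfull(\R)$ and any triple $x_i < x_j < x_k$ in $E_1$, applying the Besov Embedding Theorem at the vertex $x_j$ and eliminating $\varphi'(x_j)$ yields
\begin{equation*}
\bigl|g[x_i,x_j,x_k]\bigr| \lesssim (x_k-x_i)^{-2/p}\,\|\varphi\|_{\bes(\R)}.
\end{equation*}
The quantitative content of the full $\bes$-norm is then obtained by refining this pointwise estimate into an $L^p$-type estimate over carefully chosen subintervals $J_i \ni x_i$ (small enough that different pairs $(x_i,x_k)$ contribute to essentially disjoint portions of the Besov double-integral), producing an $\ell^p$-sum of $O(N^2)$ divided-difference functionals that lower-bounds $\|\varphi\|_{\bes}^p$. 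The difficulty — and the reason for the slope optimization in the construction — is that these lower-bound functionals must agree up to universal constants with the upper-bound functionals produced by the Hermite interpolant. Matching them forces the choice of each $s_i$ as the unique minimizer of a local discrete $p$-th-power energy over a neighborhood of $x_i$ in $E_1$; this minimization is a finite-dimensional strictly convex problem whose solution is a linear functional of $O(N)$ values of $g$, and the resulting total count of linear functionals is $N_0 \lesssim N^2$ as claimed.
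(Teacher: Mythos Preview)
Your proposal has two genuine gaps that would prevent the argument from closing.

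First, the slope selection. You write that each $s_i$ should be ``the unique minimizer of a local discrete $p$-th-power energy'' and then assert that this minimizer ``is a linear functional of $O(N)$ values of $g$.'' For $p\neq 2$ this is false: the argmin of a strictly convex $\ell^p$-type functional is in general a \emph{nonlinear} function of the data, so your $T_b$ would fail to be linear. The paper sidesteps this entirely by taking the slope at $x_k$ to be the secant to the \emph{nearest neighbor}, $m_k=(g(x_k)-g(x_{\nu(k)}))/(x_k-x_{\nu(k)})$, which is manifestly linear in $g$ and requires no optimization. The point is that one does not need the slopes to be optimal --- only good enough that the mean-value-theorem error $|m_k-\tilde F'(x_k)|$ is controlled by $\|\tilde F\|_{\bes}$ on the shortest adjacent interval, and the nearest-neighbor choice achieves exactly that.

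Second, the off-diagonal bookkeeping. You claim the off-diagonal contributions to $\|T_b g\|_{\bes}^p$ ``decay fast enough \ldots\ to be absorbed into the diagonal ones,'' giving $O(N)$ functionals in $(g(x_i),s_i)$. This is not correct for the Besov double integral: the cross terms $\int_{I_k}\int_{I_l}|F'(x)-F'(y)|^p/|x-y|^p\,dx\,dy$ for $l>k+1$ contribute genuinely and are comparable to $|m_{k+1}-m_l|^p A_{kl}$ with $A_{kl}=\int_{I_k}\int_{I_l}|x-y|^{-p}$. The paper keeps these $O(N^2)$ terms explicitly in the formula for $M^p$, and they are precisely where the bound $N_0\lesssim(\#E_1)^2$ comes from. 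Your accounting (``rising to $O(N^2)$ once the slopes $s_i$ are themselves expressed as linear combinations of $O(N)$ values'') conflates the number of functionals with the number of nonzero coefficients in each; that is not the same thing, and in any case with nearest-neighbor slopes each $m_k$ depends on only two values of $g$.

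For the lower bound, your divided-difference idea is in the right spirit, but the paper's route is more direct: having fixed the explicit $M^p=\sum_k M_k^p\Delta_k^2+\sum_{k<l}|m_{k+1}-m_l|^p A_{kl}$, one shows $M\lesssim\|\tilde F\|_{\bes}$ for \emph{any} interpolant $\tilde F$ by combining the mean-value estimate $|m_k-\tilde F'(x_k)|\lesssim\|\tilde F\|_{\bes(J_k)}\delta_k^{1-2/p}$ with a Hardy-type inequality $\int_a^b|F'(x)-F'(a)|^p/|x-a|^{p-1}\,dx\lesssim\|F\|_{\bes([a,b])}^p$. No matching of upper- and lower-bound functionals is required, because both bounds are proved against the same explicit $M$.
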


\begin{proof}

If $\#(E_1) \leq 1$ then the proposition is trivial, thus we may suppose that $\#(E_1) \geq 2$. We write $E_1 = \{x_1,\cdots,x_N\}$.

For each $x_k \in E_1$, define $x_{\nu(k)} \in E_1$ to be a nearest neighbor of $x_k$. For $1 \leq k \leq m$, define $m_k = \frac{g(x_k) - g(x_{\nu(k)})}{x_k - x_{\nu(k)}}$, $L_k(x) = g(x_k) + m_k(x-x_k)$, and $I_k = [x_k,x_{k+1}]$. Additionally, define $I_0 = (-\infty, x_1]$, $I_{N} = [x_N,\infty)$, and $\Delta_k = |x_k - x_{k+1}|$ for $1 \leq k \leq N-1$. From the classical Whitney extension theorem (see \cite{St1}), it is simple to find $F_k \in \dot{C}^{1,1}(I_k)$ ($0 \leq k \leq N)$ with
\begin{itemize}
\item $F_k(x) = L_k(x)$ for $x_k \leq x \leq x_k + \frac{1}{10}\Delta_k$, and $F_k(x) = L_{k+1}(x)$ for $x_{k+1} - \frac{1}{10}\Delta_{k} \leq x \leq x_{k+1}$;
\item $\|F_k\|_{\dot{C}^{1,1}(I_k)}$ is C-optimal with respect to the above property ($1 \leq k \leq N-1)$;
\item $F_k$ depends linearly on $g$;
\item $F_0(x) = f(x_1) + m_1(x-x_1) \; \mbox{for} \;x \in I_0$, and $F_N(x) = f(x_N) + m_N(x-x_N) \; \mbox{for} \; x \in I_N$.
\end{itemize}
Recall that $\dot{C}^{1,1}(I)$ is the space of functions semi-normed by $\|F\|_{\dot{C}^{1,1}} = \ \sup_{x,y \in I} |F'(x)-F'(y)|/|x-y|$. The classical Whitney extension theorem also gives a formula for $M_k \geq 0$ ($0 \leq k \leq N$) that satisfies $M_k \approx \|F_k\|_{\dot{C}^{1,1}(I_k)}$:
\begin{align}
\label{piecenorm} 
M_k & = |m_{k+1} - m_k|\Delta_k^{-1} + |L_k(x_{k+1}) - g(x_{k+1})| \Delta_k^{-2}, \; \mbox{for} \; 1 \leq k \leq N-1; \\
M_0 & = M_{N} = 0. \notag{}
\end{align}
Now, define $F \in C^{1,1}(\R)$ by $F(x) = F_k(x)$ for $x \in I_k$. Let $\displaystyle A_{kl} = \int_{I_k} \int_{I_l} \frac{1}{|x-y|^{p}} dx dy$, and set
$$M^p = \sum_{k=1}^{N-1} M^p_k \Delta^{2}_k + \sum_{0 \leq k < l \leq N} |m_{k+1} - m_l|^p A^p_{kl}.$$
\textbf{Claim 1:} $\|F\|_{\bes(\R)} \lesssim M$

To prove Claim 1 we must bound
\begin{equation}
\|F\|^p_{\bes(\R)} = \sum_{k=0}^{N} \int_{I_k} \int_{I_k} \frac{|F'(x)-F'(y)|^p}{|x-y|^p} dx dy + 2\sum_{0 \leq k < l \leq N} \int_{I_k} \int_{I_l} \frac{|F'(x) - F'(y)|^p}{|x-y|^p} dx dy.
\label{beseq}
\end{equation}

We now analyze each sum in \eqref{beseq} separately, initially focusing on the first. By the Lipschitz control on the derivative of $F=F_k$ on $I_k$ ($0 \leq k \leq N$),
\begin{equation}
\sum_{k=0}^{N} \int_{I_k} \int_{I_k} \frac{|F'(x)-F'(y)|^p}{|x-y|^{p}} dx dy \lesssim \sum_{k=1}^{N-1} M^p_k \Delta^2_k.
\label{besineq1}
\end{equation}
Now, for an individual term from the second sum in \eqref{beseq},
\begin{align*}
\int_{I_k} \int_{I_l} \frac{|F'(x) - F'(y)|^p}{|x-y|^p} dy dx & \lesssim \int_{I_k} \int_{I_l} \frac{|F'(x_{k+1}) - F'(x_l)|^p}{|x-y|^p} dy dx + \int_{I_k} \int_{I_l} \frac{|F'(x) - F'(x_{k+1})|^p}{|x-y|^p} dy dx \\
& + \int_{I_k} \int_{I_l} \frac{|F'(y) - F'(x_l)|^p}{|x-y|^p} dy dx \\
& \lesssim |m_{k+1} - m_l|^p A_{kl} + M^p_k \Delta^p_k \int_{x_k}^{x_{k+1} - \frac{1}{10}\Delta_k} \int_{I_l} \frac{1}{|x-y|^p} dy dx \\
& + M^p_l \Delta^p_l \int_{x_l + \frac{1}{10}\Delta_l}^{x_{l+1}} \int_{I_k} \frac{1}{|x-y|^p} dx dy.
\end{align*}
Here, we have used  that $F(x) = L_{k+1}(x)$ for $x \in [x_{k+1} - \frac{1}{10}\Delta_k,x_{k+1}]$, and $F(x) = L_{l}(x)$ for $x \in [x_{l},x_{l} + \frac{1}{10}\Delta_l]$. Summing over $0 \leq k < l \leq N$ we obtain
\begin{equation}
\label{besineq2}
\sum_{0 \leq k < l \leq N} \int_{I_k} \int_{I_l} \frac{|F'(x) - F'(y)|^p}{|x-y|^p} dx dy \lesssim \sum_{0 \leq k < l \leq N} |m_{k+1}-m_l|^p A_{kl} + \sum_{k=0}^{N-1} M^p_k \Delta^2_k + \sum_{l=1}^{N} M^p_l \Delta^2_l.
\end{equation}
Claim 1 now follows from \eqref{beseq}, \eqref{besineq1}, and \eqref{besineq2}.

\textbf{Claim 2:} Suppose that $F \in \bes([a,b])$. Then $$\int_a^b \frac{|F'(x) - F'(a)|^p}{|x-a|^{p-1}} dx \lesssim \|F\|^p_{\bes(I)}.$$

To prove Claim 2, we may suppose without loss of generality that $[a,b] = [0,1]$ through scale invariance. For $k \geq 0$, define $I'_{k} = [2^{-k-1},2^{-k}]$. Notice that $(0,1] = \bigcup I'_k$, and the intervals $I'_k$ intersect only at their endpoints. We write 
\begin{align*}
\int_0^1 \frac{|F'(x) - F'(0)|^p}{x^{p-1}} dx \lesssim \sum_{k \geq 0} \int_{I'_k} \frac{|F'(2^{-k}) - F'(0)|^p}{x^{p-1}} dx + \sum_{k \geq 0} \int_{I'_k} \frac{|F'(x) - F'(2^{-k})|^p}{x^{p-1}} dx.
\end{align*}
For any $x \in I'_k$, we have $x \approx 2^{-k}$. From the BET, $|F'(x) - F'(2^{-k})|^p \lesssim \|F\|^p_{\bes(I'_k)} 2^{-k(p-2)}$ for $x \in I'_k$. Thus,
\begin{align*}
\int_0^1 \frac{|F'(x) - F'(0)|^p}{x^{p-1}} dx & \lesssim \sum_{k \geq 0} |F'(2^{-k}) - F'(0)|^p 2^{-k(2-p)} + \sum_{k \geq 0} \|F\|^p_{\bes(I'_k)} \lesssim \sum_{k \geq 0} |F'(2^{-k}) - F'(0)|^p 2^{-k(2-p)} \\
& + \|F\|_{\bes([0,1])}.
\end{align*}
Finally, we must show that $ \sum_{k \geq 0} |F'(2^{-k}) - F'(0)|^p 2^{k(p-2)} \lesssim \|F\|_{\bes([0,1])}$, which will finish off the proof of Claim 2. From $F \in \bes([0,1]) \subset \dot{C}^{1,\alpha}([0,1])$, we find $\displaystyle \lim_{x \rightarrow 0} F'(x) = F'(0)$. Thus, $F'(2^{-k}) - F'(0) = \sum_{l \geq k} \left[ F'(2^{-l}) - F'(2^{-l-1}) \right]$. Let $\epsilon < \frac{p-2}{p}$, and through H\"older's inequality we bound
\begin{align*}
\sum_{k \geq 0} |F'(2^{-k}) - F'(0)|^p 2^{k(p-2)} & = \sum_{k \geq 0} \left| \sum_{l \geq k} (F'(2^{-l}) - F'(2^{-l-1})) 2^{l \epsilon} 2^{-l \epsilon} \right|^p 2^{k(p-2)} \\
& \leq \sum_{k \geq 0}  2^{k(p-2)}  \sum_{l \geq k} |F'(2^{-l}) - F'(2^{-l-1})|^p 2^{l \epsilon p}  \left[ \sum_{l \geq k} 2^{-l p' \epsilon} \right]^{p/p'} \\
& \lesssim \sum_{k \geq 0}  2^{k(p-2)}  \sum_{l \geq k} |F'(2^{-l}) - F'(2^{-l-1})|^p 2^{l \epsilon p}  2^{-k p \epsilon} \\
& = \sum_{l \geq 0} |F'(2^{-l}) - F'(2^{-l-1})|^p 2^{l \epsilon p} \sum_{k \leq l} 2^{k(p-2) - k p \epsilon} \\
& \lesssim \sum_{l \geq 0} |F'(2^{-l}) - F'(2^{-l-1})|^p 2^{l(p-2)} \lesssim \sum_{l \geq 0} \|F\|^p_{\bes(I'_l)} \leq \|F\|^p_{\bes([0,1])},
\end{align*}
which completes the proof of Claim 2.

Having already established Claim 1, the proof of Proposition \ref{lbe} will nearly be complete once we establish Claim 3 below.

\textbf{Claim 3:} Given any $\tilde{F} \in \bes(\R)$ with $\tilde{F}|_{E_1} = g$, we have $M \lesssim \|\tilde{F}\|_{\bes(\R)}$, and thus $M \lesssim \|g\|_{\bes(\R)|_{E_1}}$.

For $1 \leq k \leq N$, define $\delta_k = |x_k - x_{\nu(k)}|$. Note that $\delta_k, \delta_{k+1} \leq \Delta_k$ for each $1 \leq k \leq N-1$. Also, let $J_k$ be the interval with endpoints $x_k$ and $x_{\nu(k)}$. Recall that $m_k = \frac{g(x_k) - g(x_{\nu(k)})}{x_k - x_{\nu(k)}} = \frac{\tilde{F}(x_k) - \tilde{F}(x_{\nu(k)})}{x_k - x_{\nu(k)}}$. The mean value theorem implies the existence of $x_k^* \in J_k$ with $\tilde{F}'(x_k^*) = m_k$. From the BET,
\begin{equation}
|\tilde{F}'(x_k) - m_k| = |\tilde{F}'(x_k) -\tilde{F}'(x^*_k)| \lesssim \|\tilde{F}\|_{\bes(J_k)} (\delta_k)^\alpha.
\label{closeder}
\end{equation}
We now examine a single term in first sum in the definition of $M^p$. Recall \eqref{piecenorm}, which implies for $1 \leq k \leq N-1$ that
\begin{align*}
M_k & \approx |m_{k+1} - m_k|\Delta_k^{-1} + |L_k(x_{k+1}) - g(x_{k+1})| \Delta_k^{-2} \lesssim |m_{k+1} - \tilde{F}'(x_{k+1})| \Delta_k^{-1} \\
& + |\tilde{F}'(x_k) - \tilde{F}'(x_{k+1})| \Delta^{-1}_k + |m_k - \tilde{F}'(x_k)| \Delta_k^{-1} \\
& + |J_{x_k} \tilde{F}(x_{k+1}) - g(x_{k+1})| \Delta_k^{-2} + |J_{x_k} \tilde{F} (x_{k+1}) - L_k(x_{k+1})| \Delta_k^{-2}.
\end{align*}
Using that $|J_{x_k} \tilde{F} (x_{k+1}) - L_k(x_{k+1})| \Delta_k^{-2} = |\tilde{F}'(x_k) -  m_k|\Delta^{-1}_k$, the BET, and \eqref{closeder}, this implies
\begin{align*}
M_k & \lesssim  \|\tilde{F}\|_{\bes(J_{k+1})} \delta_{k+1}^\alpha \Delta_k^{-1} + \|\tilde{F}\|_{\bes(I_k)} \Delta^{\alpha-1}_{k} + \|\tilde{F}\|_{\bes(J_{k})} (\delta_{k})^\alpha \Delta_k^{-1} \\
& \lesssim  \|\tilde{F}\|_{\bes(J_{k+1})} \Delta_k^{\alpha-1} + \|\tilde{F}\|_{\bes(I_k)} \Delta^{\alpha-1}_{k} + \|\tilde{F}\|_{\bes(J_{k})} \Delta_k^{\alpha-1}.
\end{align*}
Thus,
\begin{equation*}
M^p_k \lesssim \Delta_k^{p(\alpha-1)} \left[ \|\tilde{F}\|^p_{\bes(J_{k+1})} + \|\tilde{F}\|^p_{\bes(I_k)} + \|\tilde{F}\|^p_{\bes(J_{k})} \right].
\end{equation*}
Then, summing over $1 \leq k \leq N-1$, and using the fact that $p(\alpha-1) = -2$, we find
\begin{equation}
\sum_{k=1}^{N-1} M^p_k \Delta^{2}_k \lesssim \|\tilde{F}\|_{\bes(\R)}.
\label{byo1}
\end{equation}
Fix $1 \leq k, l \leq N$ with $l \geq k+2$, and consider a term in the second double sum in the expression for $M^p$ : $|m_{k+1} - m_l|^p A^p_{kl}$. For $x \in I_k$, and $y \in I_l$, we have
\begin{align*}
\frac{|m_{k+1} - m_l|^p}{|x-y|^p} & \lesssim \frac{|m_{k+1} - \tilde{F}'(x_{k+1})|^p}{|x-y|^p} + \frac{|\tilde{F}'(x_{k+1}) - \tilde{F}'(x)|^p}{|x-y|^p} + \frac{|\tilde{F}'(x) - \tilde{F}'(y)|^p}{|x-y|^p} \\
& +  \frac{|\tilde{F}'(y) - \tilde{F}'(x_l)|^p}{|x-y|^p} + \frac{|\tilde{F}'(x_l) - m_l|^p}{|x-y|^p}.
\end{align*}
Integrating this over $x \in I_k$ and $y \in I_l$ lends us
\begin{align*}
|m_{k+1} - m_l|^p A^p_{kl} & \lesssim \int_{x_k}^{x_{k+1}} \int_{x_l}^{x_{l+1}} \frac{|m_{k+1} - \tilde{F}'(x_{k+1})|^p}{|x-y|^p} dy dx +  \int_{x_k}^{x_{k+1}} \int_{x_l}^{x_{l+1}} \frac{|\tilde{F}'(x_{k+1}) - \tilde{F}'(x)|^p}{|x-y|^p} dy dx \notag{} \\
& + \int_{x_k}^{x_{k+1}} \int_{x_l}^{x_{l+1}} \frac{|\tilde{F}'(x) - \tilde{F}'(y)|^p}{|x-y|^p} dy dx + \int_{x_k}^{x_{k+1}} \int_{x_l}^{x_{l+1}} \frac{|\tilde{F}'(y) - \tilde{F}'(x_l)|^p}{|x-y|^p} dy dx \\
&  + \int_{x_k}^{x_{k+1}} \int_{x_l}^{x_{l+1}} \frac{|\tilde{F}'(x_l) - m_l|^p}{|x-y|^p} dy dx.
\end{align*}
Thus,
\begin{align}
\sum_{0 \leq k < l \leq N} |m_{k+1} - m_l|^p A^p_{kl} & \lesssim \sum_{k=0}^{N-2} \int_{x_k}^{x_{k+1}} \int_{x_{k+2}}^{\infty} \frac{|m_{k+1} - \tilde{F}'(x_{k+1})|^p}{|x-y|^p} dy dx  \notag{}\\ 
& + \sum_{k=0}^{N-2} \int_{x_k}^{x_{k+1}} \int_{x_{k+2}}^{\infty} \frac{|\tilde{F}'(x_{k+1}) - \tilde{F}'(x)|^p}{|x-y|^p} dy dx \notag{} \\
& + \int_{\R} \int_{\R} \frac{|\tilde{F}'(x) - \tilde{F}'(y)|^p}{|x-y|^p} dy dx + \sum_{l=2}^{N} \int_{x_l}^{x_{l+1}} \int_{-\infty}^{x_{l-1}} \frac{|\tilde{F}'(y) - \tilde{F}'(x_l)|^p}{|x-y|^p} dy dx \notag{} \\
& + \sum_{l=2}^{N} \int_{x_l}^{x_{l+1}} \int_{-\infty}^{x_{l-1}} \frac{|\tilde{F}'(x_l) - m_l|^p}{|x-y|^p} dy dx 
\label{besineq3}
\end{align}
Notice that the first two terms in the above sum are the same as the last two, except with a different index, and a reversed orientation. We now work on establishing a useful bound for the first two terms in \eqref{besineq3} (the same bound will apply for the last two). For the first term, recall \eqref{closeder}, which implies that
\begin{align}
\int_{x_k}^{x_{k+1}} \int_{x_{k+2}}^{\infty} \frac{|m_{k+1} - \tilde{F}'(x_{k+1})|^p}{|x-y|^p} dy dx \lesssim \|\tilde{F}\|^p_{\bes(J_{k+1})} \delta^{p-2}_{k+1} |x_{k+1}-x_k| \cdot |x_{k+1} - x_{k+2}|^{1-p}\leq \|\tilde{F}\|^p_{\bes(J_{k+1})}.
\label{besineq4}
\end{align}
Here, the last inequality follows since $\delta_{k+1} \leq |x_{k+1} - x_{k+2}|, |x_{k+1} - x_k|$. Now, consider a term in the second sum on the RHS of \eqref{besineq3}. Using Claim 2,
\begin{align}
\label{besineq6}
\int_{x_k}^{x_{k+1}} \int_{x_{k+2}}^{\infty} \frac{|\tilde{F}'(x_{k+1}) - \tilde{F}'(x)|^p}{|x-y|^p} & dy dx = C(p) \int_{x_k}^{x_{k+1}} \frac{|\tilde{F}'(x) - \tilde{F}'(x_{k+1})|^p}{|x-x_{k+2}|^{p-1}} dx \\
& \lesssim  \int_{x_k}^{x_{k+1}} \frac{|\tilde{F}'(x) - \tilde{F}'(x_{k+1})|^p}{|x-x_{k+1}|^{p-1}} dx \lesssim \|\tilde{F}\|^p_{\bes(I_k)}. \notag{} 
\end{align}
Thus, from  \eqref{besineq3}, \eqref{besineq4}, and \eqref{besineq6}, we have
\begin{align}
\sum_{0 \leq k < l \leq N} |m_{k+1} - m_l|^p A^p_{kl} & \lesssim \sum_{k=0}^{N-2} \|\tilde{F}\|^p_{\bes(J_{k+1})} +  \sum_{k=0}^{N-2} \|\tilde{F}\|^p_{\bes(I_k)} + \|\tilde{F}\|^p_{\bes(\R)} + \sum_{l=2}^{N} \|\tilde{F}\|^p_{\bes(I_l)}  + \sum_{l=2}^{N}\|\tilde{F}\|^p_{\bes(J_l)} \notag{}\\
& \lesssim \|\tilde{F}\|^p_{\bes(\R)}.
\label{byo2}
\end{align}

Together, \eqref{byo1} and \eqref{byo2} imply Claim 3. The $F$ we constructed obviously satisfies $F|_{E_1} = g$, while Claim 1 and Claim 3 imply $\|F\|_{\bes(\R)} \approx \|g\|_{\bes(\R)|_{E_1}} \approx M$. This is not good enough, since the lemma requires a bound on the inhomogeneous $\besfull(\R)$ norm.

To finish the proof, we define $\widehat{F} = F \theta$, for $\theta \in C^\infty_c(\R)$ satisfying: $(1)$ $|d^k/dx^k \theta| \lesssim 1$ on $\R$ ($k \leq 2$), $(2)$ $\theta \equiv 1$ on $E_1$, and $(3)$ $\mbox{supp} (\theta) \subset [a_1,b_1]$ with $|a_1-b_1|\lesssim 1$. The existence of such a $\theta$ follows since $\mbox{diam}(E_1) \lesssim 1$. From the aforementioned properties of $F$ and $\theta$, along with the BET and mean value theorem,
\begin{enumerate}
\item $\widehat{F}|_{E_1} = g$, and
\item $\|\widehat{F}\|^p_{\besfull(\R)} \lesssim M^p + |g(x_1)-g(x_2)|^p/|x_1-x_2|^p + |g(x_1)|^p$.
\end{enumerate}
Moreover, from the BET and the mean value theorem, each of the terms on the RHS of $P2$ of $\widehat{F}$ is bounded by $\|g\|_{\besfull(\R)|_{E_1}}$. Let $T_b(g) = \widehat{F}$, which satisfies the desired properties.
\end{proof}

\section{A Calder\'on-Zygmund Decomposition}
\label{CZDecomp}
\subsection{OK squares \& CZ squares}
\label{CZ}

Given a finite $E \subset \R^2$, we now fix $Q^\circ \subset \R^2$ to be any square centered at the origin such that $E \subset \frac{1}{10} Q^\circ$. Fix $c_1 > 0$ to be some universal constant, whose precise value is yet to be determined. Our only assumption on $c_1$ is that it is taken to be sufficiently small in order to allow the arguments of this paper go through. A square $Q \subset Q^\circ$ that arises from $Q^\circ$ by repeated bisection will be termed dyadic. A useful concept is that of

\begin{defn}[OK squares]
A dyadic square $Q \subset Q^\circ$ is OK if and only if $$\|3Q \cap E\|_{\bes} \leq c_1 \delta_Q^{2/p-1}.$$
\end{defn}

\begin{rek} From the definition of the Besov semi-norm of a set, for any OK square $Q$ the following is true: Define the rescaled set $\overline{E} = \frac{1}{\delta_Q}(3Q \cap E - c_Q) + c_Q$, then $\|\overline{E}\|_{\bes} \leq c_1$.
\end{rek}

We now decompose $Q^\circ$ into finitely many dyadic squares, pairwise disjoint except for their boundaries, using a \textit{Calder\'on-Zygmund decomposition}.

\begin{description}
\item[CZ Cutting Procedure:] Given a dyadic sub-square $Q \subset Q^\circ$, proceed as follows: If $Q$ is OK, then return the singleton collection $\Lambda_Q=\{Q\}$. Otherwise, return the collection $$\bigcup_{Q' : (Q')^+ = Q} \Lambda_{Q'}.$$
\end{description}

\begin{lem}[Process Terminates]
Let $E \subset \frac{1}{10}Q^\circ$ be finite. Then $\Lambda_{Q^\circ}$ contains finitely many squares.
\end{lem}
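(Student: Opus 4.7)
The CZ cutting procedure generates a quadtree rooted at $Q^\circ$: every node is a dyadic subsquare of $Q^\circ$, the leaves are precisely the OK squares, and every internal node (a non-OK square) has exactly four children, namely its dyadic bisections. The collection $\Lambda_{Q^\circ}$ is the set of leaves. Since the branching number is $4$, it suffices to prove the tree has \emph{finite depth}, i.e., there is a positive lower bound on $\delta_Q$ for any $Q$ that gets subdivided.

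The key observation is that a dyadic square becomes automatically OK once it is small enough relative to $E$. We may assume $\#(E) \geq 2$, otherwise $Q^\circ$ itself is trivially OK and $\Lambda_{Q^\circ} = \{Q^\circ\}$. Set $\rho := \min\{|x-y| : x,y \in E,\; x \neq y\}$, which is strictly positive because $E$ is finite. If $\delta_Q < \rho/(3\sqrt{2})$, then $\mathrm{diam}(3Q) = 3\sqrt{2}\,\delta_Q < \rho$, so $3Q \cap E$ contains at most one point. By the remark following the definition of $\|\cdot\|_{\bes}$, a set of cardinality at most one has Besov semi-norm zero (one just takes $\varphi$ constant through the unique point, or $\varphi \equiv 0$ if $3Q \cap E = \emptyset$). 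Hence
\[
\|3Q \cap E\|_{\bes} = 0 \leq c_1 \delta_Q^{2/p-1},
\]
and $Q$ is OK.

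Consequently, any dyadic $Q \subset Q^\circ$ with $\delta_Q < \rho/(3\sqrt{2})$ is forced to be a leaf of the tree. The sidelengths of dyadic descendants of $Q^\circ$ take only the values $2^{-k}\delta_{Q^\circ}$ for $k = 0,1,2,\ldots$, so there is a largest integer $k_0$ with $2^{-k_0}\delta_{Q^\circ} \geq \rho/(3\sqrt{2})$; the tree has depth at most $k_0 + 1$. A quadtree of finite depth has finitely many nodes, whence $\Lambda_{Q^\circ}$ is finite.

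There is essentially no hard step here; the argument is a standard termination/finiteness argument for a CZ stopping procedure. The only point worth verifying is the base case $\#(3Q \cap E) \leq 1 \Rightarrow \|3Q \cap E\|_{\bes} = 0$, but this is already recorded in the remark following the definition of the Besov semi-norm of a set. A quantitative bound on $\#\Lambda_{Q^\circ}$ in terms of $\#E$, $p$, and $c_1$ is not required for the statement, though it would fall out of the same argument with a bit more bookkeeping if it were needed later.
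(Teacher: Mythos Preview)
Your proof is correct and follows essentially the same approach as the paper: show that once $\delta_Q$ is below a fixed fraction of the minimum pairwise distance in $E$, the set $3Q\cap E$ has at most one point and hence $\|3Q\cap E\|_{\bes}=0$, so $Q$ is OK and the cutting stops. The paper uses the threshold $\epsilon=\tfrac{1}{100}\min_{x\neq y}|x-y|$ in place of your $\rho/(3\sqrt{2})$, but the argument is otherwise identical.
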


\begin{proof}
Let $\epsilon = \frac{1}{100}\inf \{|x-y|: x,y \in E, x \neq y\}$. If $Q \subset Q^\circ$ is any dyadic square with $\delta_Q \leq \epsilon$, then $\#(3Q \cap E) \leq 1$, and so $\|3Q \cap E\|_{\bes} = 0$. This proves that dyadic squares with $\delta_Q \leq \epsilon$ are OK. Since there only a finite number of squares $Q \subset Q^\circ$ with $\delta_Q \geq \epsilon/2$, the Cutting Procedure eventually terminates.
\end{proof}

We denote $\Lambda = \Lambda_{Q^\circ} = \CZ$, and for $Q_\nu \in \Lambda$ we set $\delta_\nu = \delta_{Q_\nu}$. The squares $Q \in \Lambda$ are called Calder\'on-Zygmund squares (or for short, CZ squares).

A collection of sets, $\Pi$, is said to satisfy the \textit{Bounded Intersection Property} if there exists a universal constant constant $C$, so that for any $S \in \Pi$, there are at most $C$ elements of $\Pi$ intersecting $S$. We call $C$ the \textit{intersection constant} of $\Pi$.

\begin{lem}[Good Geometry]
\label{goodgeom}
Given $Q,Q' \in \Lambda$, the following holds:
\begin{enumerate}
\item $Q \leftrightarrow Q'$ $\Rightarrow$ $\frac{1}{2} \delta_{Q'}\leq \delta_Q \leq 2 \delta_{Q'}$;
\item $Q \cap Q' = \emptyset$ $\Rightarrow$ $\tilde{Q} \cap \tilde{Q}' = \emptyset$; and
\item $Q \cap Q' = \emptyset$ $\Rightarrow$ $d(Q,Q') \geq \frac{1}{10} \max\{\delta_{Q},\delta_{Q'}\}$.
\end{enumerate}
As a consequence of the first two properties, the collection $\{\tilde{Q}_\nu\}_{\nu=1}^K$ satisfies the Bounded Intersection Property with constant $13$.
\end{lem}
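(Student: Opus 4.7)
The proof rests on two monotonicity facts. First, $\|A \cap E\|_{\bes}$ is monotone under set inclusion. Second, the OK threshold $c_1 \delta^{2/p-1}$ is strictly decreasing in $\delta$, since $p > 2$ gives $2/p - 1 < 0$. Together they yield the key obstruction: for any OK CZ square $Q'$, any strictly smaller dyadic square $R$ satisfying $3R \subset 3Q'$ must itself be OK. Properties (1) and (3) will follow by producing, whenever the stated conclusions fail, a non-OK dyadic ancestor of $Q$ that violates this obstruction.

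\textbf{Unified ancestor argument.} Let $Q, Q' \in \Lambda$ with $\delta_Q \leq \delta_{Q'}$, and assume either $Q \leftrightarrow Q'$ (toward (1)) or $Q \cap Q' = \emptyset$ with $d(Q, Q') < \delta_{Q'}/2$ (toward (3)). In either case some $x \in Q$ satisfies $d(x, Q') < \delta_{Q'}/2$. If $\delta_Q = \delta_{Q'}$, equal-scale dyadic lattice geometry settles both conclusions directly (trivially for (1); disjoint equal-scale cells are at distance $\geq \delta_{Q'}$). Otherwise $\delta_Q \leq \delta_{Q'}/2$; let $R$ be the dyadic ancestor of $Q$ at scale $\delta_R = \delta_{Q'}/2$. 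The trichotomy for same-scale dyadic cells (contained in $Q'$, touching $Q'$, or at distance $\geq \delta_{Q'}/2$), combined with $x \in R \setminus Q'$ and $d(x, Q') < \delta_{Q'}/2$, forces $R$ to touch $Q'$ — the contained case is impossible because $R \supset Q \in \Lambda$ while the CZ leaf $Q'$ has no proper CZ descendants. An $L^\infty$-ball check ($c_R$ within $L^\infty$-distance $3\delta_{Q'}/4$ of $c_{Q'}$, while $3R$ is an $L^\infty$-ball of radius $3\delta_{Q'}/4$ about $c_R$) then yields $3R \subset 3Q'$.

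If additionally $\delta_Q < \delta_{Q'}/2$ strictly, then $R \supsetneq Q$, so $R$ was cut by the CZ procedure and is therefore not OK: a non-OK dyadic square strictly smaller than the OK $Q'$ with $3R \subset 3Q'$, violating the key obstruction. Contradiction. Hence $\delta_Q \geq \delta_{Q'}/2$, completing (1). For (3), the remaining case $\delta_Q = \delta_{Q'}/2$ forces $R = Q$, so ``$R$ touches $Q'$'' reads ``$Q \cap Q' \neq \emptyset$'', contradicting the disjointness hypothesis. Hence $d(Q, Q') \geq \delta_{Q'}/2$ for every disjoint CZ pair — comfortably stronger than the claimed $\delta_{Q'}/10$.

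\textbf{Property (2) and the intersection count.} Using the strengthened form of (3), for disjoint $Q, Q' \in \Lambda$ with $\delta_Q \leq \delta_{Q'}$ we get $d(\tilde{Q}, \tilde{Q}') \geq \delta_{Q'}/2 - 0.15(\delta_Q + \delta_{Q'}) \geq 0.2\delta_{Q'} > 0$, yielding (2). For the intersection count, if $\tilde{Q}_{\nu'} \cap \tilde{Q}_\nu \neq \emptyset$, the contrapositive of (2) gives $Q_{\nu'} \leftrightarrow Q_\nu$, and (1) restricts $\delta_{\nu'}/\delta_\nu \in [1/2, 2]$. A standard dyadic-neighbor count under this size constraint allows at most $2$ neighbors per edge, at most $1$ per corner, and $Q_\nu$ itself (a neighbor of itself by definition), totaling $4 \cdot 2 + 4 \cdot 1 + 1 = 13$. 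The main delicacy in the whole argument is the key obstruction's strict inequality $2^{1 - 2/p} > 1$, which is positive only because $p > 2$; everything else is lattice bookkeeping.
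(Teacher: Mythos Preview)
Your proof is correct and uses essentially the same contradiction mechanism as the paper: a non-OK dyadic ancestor $R$ with $3R \subset 3Q'$ would have to be OK by monotonicity of $\|\cdot\|_{\bes}$ and the decrease of the threshold $c_1\delta^{2/p-1}$. The paper proves only (1), using the parent $Q^+$ and assuming $\delta_Q \le \tfrac14\delta_{Q'}$ for contradiction, then asserts that (2) and (3) follow; you instead pick the ancestor at scale $\delta_{Q'}/2$, which lets you handle (1) and (3) in one stroke, extract the sharper bound $d(Q,Q')\ge \tfrac12\delta_{Q'}$, and deduce (2) and the count of $13$ explicitly. One small wording slip: in the touching case for (1) your chosen $x$ lies in $Q\cap Q'$, so ``$x\in R\setminus Q'$'' is not literally true there---but you don't need it, since $x\in R$ with $d(x,Q')<\delta_{Q'}/2$ already kills the far case and your CZ-leaf argument kills the contained case.
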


\begin{rek}
Henceforth, we refer to these conclusions regarding $\Lambda$ as the Good Geometry of the squares in $\Lambda$.
\end{rek}

\begin{proof}
We now proceed with the proof of $(1)$ from the lemma. This is sufficient, since $(2)$ and $(3)$ follow directly from $(1)$.

Suppose that $Q,Q' \in \Lambda$ satisfy $Q \leftrightarrow Q'$. Without loss of generality we may assume that $\delta_Q \leq \delta_{Q'}$. For sake of contradiction, suppose that $\delta_Q \leq \frac{1}{4} \delta_{Q'}$. Thus, $\delta_{Q^+} = 2 \delta_Q \leq \frac{1}{2} \delta_{Q'}$, and $3Q^+ \subset 3Q'$. Since $Q^+$ is not OK, and $2/p - 1 < 0$, we have
$$\|3Q' \cap E\|_{\bes} \geq \|3Q^+ \cap E\|_{\bes} > c_1\delta_{Q+}^{2/p - 1} \geq c_1\delta_{Q'}^{2/p -1}.$$ 
But this contradicts the fact that $Q'$ is a CZ square, and therefore OK.
\end{proof}

\subsection{Keystone squares} 
\label{alg}

We will use the following definition momentarily:
\begin{defn}[Roughness Property]
Let $c,c',c''>0$ be arbitrary constants. Let $Q$ be a square in $\R^2$, and $E_0$ a finite subset of $\R^2$. We say that $Q$ is \textit{Rough} relative to $E_0$ for the constants $c,c',c''$ (or rather Q satisfies \textbf{R}$(c,c',c'')$ relative to $E_0$) iff
\begin{description}
\item[(R1)] There exist $x_1,x_2,y_1,y_2 \in E_0 \cap Q$ with $x_1 \neq x_2$, $y_1 \neq y_2$, and so that $$\min \left\{ \left|\frac{x_1-x_2}{|x_1 - x_2|} - \frac{y_1-y_2}{|y_1 - y_2|} \right|, \left| \frac{x_1-x_2}{|x_1 - x_2|} + \frac{y_1-y_2}{|y_1 - y_2|} \right| \right\} > c''$$ OR
\item[(R2)] $c \delta_Q^{2/p-1} \leq \|E_0 \cap Q\|_{\bes} \leq c' \delta_Q^{2/p-1}$.
\end{description}
\end{defn}

\begin{lem}
\label{kap}
Suppose that $Q$ satisfies \textbf{R}$(c,c',c'')$ relative to $E_0$, for some universal constants $c,c',c''>0$. Then $\|Q \cap E_0\|_{\bes} \gtrsim \delta_Q^{2/p-1}$.
\end{lem}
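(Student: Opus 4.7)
I would split on the two clauses of the roughness hypothesis. In case \textbf{(R2)}, the lower bound $\|E_0 \cap Q\|_{\bes} \geq c\,\delta_Q^{2/p-1}$ is part of the hypothesis itself, so the conclusion is immediate. All of the content is in case \textbf{(R1)}.

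For (R1), set $\kappa = \|E_0 \cap Q\|_{\bes}$ and assume $\kappa < \infty$ (else nothing to prove). By the definition of the Besov semi-norm of a set, for every $\epsilon > 0$ I can pick Euclidean coordinates $(u,v)$ and $\varphi \in \bes(\R)$ with $\|\varphi\|_{\bes(\R)} \leq \kappa + \epsilon$ and $E_0 \cap Q \subset \{(u,\varphi(u))_{uv} : u \in \R\}$. Writing $x_i = (u_i,\varphi(u_i))_{uv}$ and $y_j = (u_j',\varphi(u_j'))_{uv}$, the assumptions $x_1 \neq x_2$ and $y_1 \neq y_2$ force $u_1 \neq u_2$ and $u_1' \neq u_2'$, since each of the four points lies on the graph of the single-valued function $\varphi$.

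The main estimate bounds the difference of the two secant directions in terms of $\kappa$. By the mean value theorem, the secant slopes
\begin{equation*}
m_x = \frac{\varphi(u_1) - \varphi(u_2)}{u_1 - u_2}, \qquad m_y = \frac{\varphi(u_1') - \varphi(u_2')}{u_1' - u_2'}
\end{equation*}
equal $\varphi'(\xi)$ and $\varphi'(\xi')$ for some $\xi,\xi'$ in the $u$-projection of $Q$, which is an interval of length $\lesssim \delta_Q$. The BET then yields
\begin{equation*}
|m_x - m_y| \;=\; |\varphi'(\xi) - \varphi'(\xi')| \;\lesssim\; (\kappa + \epsilon) \cdot \delta_Q^{1-2/p}.
\end{equation*}
The unit vectors along $x_1 - x_2$ and $y_1 - y_2$ have the form $\pm(1,m_x)/\sqrt{1+m_x^2}$ and $\pm(1,m_y)/\sqrt{1+m_y^2}$. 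The map $m \mapsto (1,m)/\sqrt{1+m^2}$ is $1$-Lipschitz (its derivative has norm $1/(1+m^2) \leq 1$), so the minimum over the sign choices of the distance between these unit vectors is at most $|m_x - m_y|$. Combining this with the lower bound $c''$ from (R1) and sending $\epsilon \to 0$ gives $c'' \lesssim \kappa \cdot \delta_Q^{1-2/p}$, equivalently $\kappa \gtrsim \delta_Q^{2/p - 1}$, which is the claim.

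The only real subtlety is the sign ambiguity in the secant directions: two secants could point in nearly opposite directions yet have almost equal slopes on the graph, so $|\mathbf{a} - \mathbf{b}|$ alone need not be small when $|m_x - m_y|$ is small. The minimum in (R1) is precisely what absorbs this ambiguity; once it is handled, the argument reduces to a routine combination of the MVT and the BET.
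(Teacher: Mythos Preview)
Your proof is correct and follows essentially the same approach as the paper: split on (R1) versus (R2), and in the (R1) case pass from the secant-direction separation to a separation of derivatives of the graphing function $\varphi$, then apply the BET. In fact you supply more detail than the paper does at the one nontrivial step---the paper simply asserts that (R1) yields $|\varphi'(u_1)-\varphi'(u_2)|\gtrsim c''$ for suitable $u_1,u_2$, whereas you justify this via the mean value theorem and the $1$-Lipschitz property of $m\mapsto (1,m)/\sqrt{1+m^2}$, correctly noting that the $\min$ in (R1) absorbs the sign ambiguity.
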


\begin{proof}
First, suppose that $Q$ satisfies \textbf{(R2)} relative to $E_0$. In this case, $\|E_0 \cap Q\|_{\bes} \geq c \delta^{2/p-1}_Q$, and we are done.

Alternatively, $Q$ satisfies \textbf{(R1)} relative to $E_0$. Let $\varphi \in \bes(\R)$, and $(u,v)$ Euclidean coordinates, be jointly given with $E_0 \cap Q \subset \{(u,\varphi(u)): u \in \R\}$. From $\textbf{(R1)}$, we find that $|\varphi'(u_1) - \varphi'(u_2)| \gtrsim c''$ for some $u_1,u_2 \in \R$ with $|u_1-u_2| \lesssim \delta_Q$. From the BET,
$$\|\varphi\|_{\bes(\R)} \gtrsim |\varphi'(u_1)-\varphi'(u_2)| |u_1-u_2|^{2/p-1} \gtrsim c'' \delta^{2/p-1}_Q.$$ 
Thus, $\|Q \cap E_0\|_{\bes} = \inf\{\|\varphi\|_{\bes(\R)} : (u,v) \; \mbox{and} \; \varphi \;\mbox{as above}\} \gtrsim  \delta^{2/p-1}_Q$.
\end{proof}

We now let $c_2, c_3 > 0$ be small universal constants which are to be determined, moreover, they are assumed to satisfy the following.

\textbf{Order Remark (OR)}: $c_1$ and $c_2$ are chosen sufficiently small, and are allowed to depend on $c_3$. $c_3$ is a small universal constant to be determined at a later time.

We set $\Lambda^\# = \{Q^\# \in \Lambda : Q \in \Lambda \;\mbox{and} \; Q \cap 100Q^\# \neq \emptyset \; \Rightarrow \; \delta_Q \geq \delta_{Q^\#} \}$. As defined, these are the CZ squares which are local minima of the sidelength function. We call $\Lambda^\#$ the \textit{Keystone squares}. An ordered list of squares $\{Q'_i\}^k_{i=1}$ is called a \textit{path} iff $Q'_1 \leftrightarrow Q'_2 \leftrightarrow \cdots \leftrightarrow Q'_k$.

If $\Lambda$ is the singleton $\{Q^\circ\}$, i.e., $Q^\circ$ is OK, Theorem \ref{mainthm} will be easy to prove after we develop some machinery. Henceforth, we assume that $\Lambda \neq \{Q^\circ\}$. The remaining case, when $\Lambda = \{Q^\circ\}$, will be handled in Section \ref{lastcase}. For $Q^\#$ Keystone, we notice that $10Q^\#$ intersects a bounded number of squares, ensuring that the $10Q^\# \cap E$ will have uncomplicated geometry. In fact, this is not true for a general CZ square $Q$, for which $10Q$ may intersect squares much smaller than itself, leading to the formation of singularities in the set $E \cap Q$ on a much smaller lengthscale than the sidelength of $Q$; nevertheless, we use this to our advantage, building an association between CZ squares and Keystone squares in the next proposition.

\begin{prop}
\label{iskey}
As long as $\Lambda \neq \{Q^\circ\}$,
\begin{description}
\item[(K1)] $\forall Q \in \Lambda$, $\exists Q^\# \in \Lambda^\#$ with a path $$Q^\# = Q'_1 \leftrightarrow Q'_2 \leftrightarrow \cdots \leftrightarrow Q'_m = Q$$ with $\{Q'_k\}_{k=1}^m \subset \Lambda$. Moreover, for all $k_2 > k_1$ we have
$$ \delta_{Q'_{k_2}} \leq C (1-c)^{k_2-k_1} \delta_{Q'_{k_1}}$$
for some universal constants $C>0$ and $0<c<1$.
\item[(K2)] $\forall Q^\# \in \Lambda^\#$, $9Q^\#$ satisfies \textbf{R}$(c_1,c_3,c_2)$ relative to $E$.
\item[(K3)] $\{10Q^\#_\mu\}^{K^\#}_{\mu=1}$ satisfies the Bounded Intersection Property.
\end{description}
\end{prop}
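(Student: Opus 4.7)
I would handle the three claims in order of difficulty: (K3), (K1), (K2).

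Claim (K3) follows directly from the Keystone definition. Suppose $10 Q^\#_\mu \cap 10 Q^\#_{\mu'} \neq \emptyset$ with $\delta_{Q^\#_\mu} \leq \delta_{Q^\#_{\mu'}}$. Then $Q^\#_\mu \subset 100 Q^\#_{\mu'}$, forcing $\delta_{Q^\#_\mu} \geq \delta_{Q^\#_{\mu'}}$ by the Keystone condition on $Q^\#_{\mu'}$; hence equal sidelengths, and a volume count of same-size, interior-disjoint CZ squares meeting $10 Q^\#_\mu$ completes the argument.

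For (K1), I would construct the path inductively outward from $Q$. Starting with $Q'_m = Q$, at each stage: if $Q'_k$ is Keystone, stop and relabel indices so that $Q^\# = Q'_1$. Otherwise, Keystone failure at $Q'_k$ produces $\tilde{Q} \in \Lambda$ with $\tilde{Q} \cap 100 Q'_k \neq \emptyset$ and $\delta_{\tilde{Q}} < \delta_{Q'_k}$; by Good Geometry I choose a CZ-neighbor $Q'_{k-1}$ of $Q'_k$ with $\delta_{Q'_{k-1}} \geq \delta_{Q'_k}$, walking ``away from $\tilde{Q}$.'' The procedure terminates since $\Lambda$ is finite and sidelengths cap at $\delta_{Q^\circ}$. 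For the geometric decay, Good Geometry bounds $\delta_{Q'_{k+1}} \in [\delta_{Q'_k}/2, 2\delta_{Q'_k}]$, and the construction ensures that in any window of $N_0$ consecutive steps the sidelength strictly doubles at least once (else the path would stall at a given scale too close to a smaller CZ square, producing a Keystone failure that the step rule resolves by doubling). Combining gives $\delta_{Q'_{k_2}} \leq 2^{N_0} \cdot 2^{-(k_2-k_1)/N_0} \delta_{Q'_{k_1}}$ for $k_1 < k_2$, the claim with $C = 2^{N_0}$ and $c = 1 - 2^{-1/N_0}$.

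For (K2), fix $Q^\# \in \Lambda^\#$. Since $\Lambda \neq \{Q^\circ\}$, the parent $(Q^\#)^+$ was cut, so it fails OK: $\|3(Q^\#)^+ \cap E\|_{\bes} > c_1 \delta_{(Q^\#)^+}^{2/p-1}$. A coordinate check gives $3 (Q^\#)^+ \subset 9 Q^\#$, yielding the lower bound $\|9 Q^\# \cap E\|_{\bes} \gtrsim c_1 \delta_{Q^\#}^{2/p-1}$ (absorbing a universal factor from $\delta_{(Q^\#)^+} = 2 \delta_{Q^\#}$). If also $\|9 Q^\# \cap E\|_{\bes} \leq c_3 \delta_{Q^\#}^{2/p-1}$, then (R2) holds, the constants being consistent by the Order Remark. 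Otherwise I would derive (R1): the Keystone property forces every CZ square $Q'$ with $3 Q' \cap 9 Q^\# \neq \emptyset$ to have $\delta_{Q'} \geq \delta_{Q^\#}$, and by Good Geometry there are only boundedly many such $Q'$; each is OK, providing a local Besov curve $\gamma_{Q'}$ through $3 Q' \cap E$ with $\|\gamma_{Q'}\|_{\bes} \lesssim c_1 \delta_{Q^\#}^{2/p-1}$. If all tangent directions of the $\gamma_{Q'}$ lay within angle $c_2$ of a single unit vector, Lemma \ref{nicecurve} and Lemma \ref{strlem} would glue these into a single Besov graph containing $9 Q^\# \cap E$ of semi-norm $\lesssim c_1 \delta_{Q^\#}^{2/p-1}$, contradicting $\|9 Q^\# \cap E\|_{\bes} > c_3 \delta_{Q^\#}^{2/p-1}$ since $c_1 \ll c_3$ by the Order Remark. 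Hence two tangent directions differ by more than $c_2$, producing four points in $E \cap 9 Q^\#$ satisfying (R1).

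The main obstacle is the stitching step in Case B of (K2): gluing the local Besov curves across overlapping CZ squares into a single global Besov graph with total semi-norm summable over the boundedly many relevant $Q'$. This demands coherent coordinate choices supplied by Lemma \ref{strlem}, careful propagation of the common-direction hypothesis (via pigeonhole across the finite collection of $Q'$'s), and constant tracking sufficient to exploit both the $c_1 \ll c_3$ and $c_2$-smallness orderings allowed by the Order Remark.
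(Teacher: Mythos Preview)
Your treatment of (K3) is correct and matches the paper. Your sketch of (K2) is also on the right track and close to the paper's argument: both exploit that the parent $(Q^\#)^+$ fails OK to get the lower bound, that the Keystone condition forces only boundedly many CZ squares of comparable size to meet $9Q^\#$, and that failure of (R1) lets one glue the local OK-curves into a single Besov graph, yielding (R2). One small correction: (R1) is a condition on \emph{secant directions of points of $E$}, not on tangent directions of the local curves $\gamma_{Q'}$; the paper phrases the gluing hypothesis accordingly (its (A2) is exactly the negation of (R1)), and this is what feeds into Lemma~\ref{nicecurve}. The paper does the gluing by a partition of unity in a common coordinate system rather than by Lemma~\ref{strlem}, but that is a minor difference.

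Your argument for (K1), however, walks in the wrong direction. Keystone squares are local \emph{minima} of the sidelength function: $Q^\# \in \Lambda^\#$ means every CZ square meeting $100Q^\#$ is at least as large as $Q^\#$. When $Q'_k$ is not Keystone, the witness $\tilde{Q}$ with $\delta_{\tilde{Q}} < \delta_{Q'_k}$ is exactly where you must head \emph{toward}, not away from. Your rule ``choose a neighbor $Q'_{k-1}$ with $\delta_{Q'_{k-1}} \geq \delta_{Q'_k}$'' produces a path of nondecreasing sidelength; such a path can only terminate at a Keystone square by accident, and in typical configurations (e.g.\ $E$ clustered in one region, so Keystone squares are the small ones near $E$) it will drift to the large boundary squares of $Q^\circ$ and never hit $\Lambda^\#$. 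Your termination claim ``sidelengths cap at $\delta_{Q^\circ}$'' does not help, since reaching a large square is not the same as reaching a Keystone square. (You may have been misled by the indexing in the statement, which puts $Q^\#$ at index $1$; the paper's own proof and all subsequent uses, e.g.\ Lemma~\ref{globsobineq}, actually set $Q = Q'_1$ and $Q^\# = Q'_m$ with sidelengths decreasing along the path.)

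The paper's construction is: from $\overline{Q}_j$, pick the \emph{nearest} CZ square $\overline{Q}_{j+1}$ with $\delta_{\overline{Q}_{j+1}} \leq \tfrac{1}{2}\delta_{\overline{Q}_j}$ among those meeting $100\overline{Q}_j$, then connect $\overline{Q}_j$ to $\overline{Q}_{j+1}$ by the CZ squares along the straight segment between them. Minimality of the distance guarantees the intermediate squares all have sidelength $\geq \delta_{\overline{Q}_j}$, so the hop has bounded length and the marker sidelengths satisfy $\delta_{\overline{Q}_{j+1}} \leq \tfrac{1}{2}\delta_{\overline{Q}_j}$; this gives both termination (finitely many scales in $\Lambda$) and the geometric decay.
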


The proof of Proposition \ref{iskey} can be found in Section \ref{proofiskey}. We now fix a labeling, and write $\Lambda^\# = \{Q^\#_\mu\}_{\mu=1}^{K^\#}$.

Let $Q_\nu \in \Lambda$ be given, then Proposition \ref{iskey} implies the existence of $\mu(\nu) \in \{1,2,\cdots, K^\#\}$ with $Q^\#_{\mu(\nu)} \in \Lambda^\#$ connected to $Q_\nu$ by a path as in \textbf{(K1)}. Fix such a choice of $\mu(\nu)$ for each $\nu \in \{1,2,\cdots,K\}$. We denote $\delta^\#_\mu = \delta_{Q^\#_\mu}$ for $\mu=1,2,\cdots,K^\#$. The following lemma is an immediate consequence of \textbf{(K1)}.

\begin{lem}
\label{stdkeyprops}
For any $\nu \in \{1,2,\cdots,K\}$ we have $d(Q_\nu,Q^\#_{\mu(\nu)}) \lesssim \delta_\nu$ and $\delta_{Q^\#_{\mu(\nu)}} \lesssim \delta_\nu$.
\end{lem}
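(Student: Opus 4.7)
The claim is flagged in the excerpt as an immediate consequence of \textbf{(K1)}, so my plan is simply to unpack that implication. First I would apply property \textbf{(K1)} of Proposition \ref{iskey} to $Q_\nu$: this yields the keystone $Q^\#_{\mu(\nu)}$ together with a path of CZ squares $Q^\#_{\mu(\nu)} = Q'_1 \leftrightarrow Q'_2 \leftrightarrow \cdots \leftrightarrow Q'_m = Q_\nu$ whose sidelengths decay geometrically between any two indices along the chain (with the keystone end of the chain being the small end, since keystones are local minima of the sidelength function by definition).

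The sidelength estimate $\delta^\#_{\mu(\nu)} \lesssim \delta_\nu$ drops out by plugging the two endpoints of the path into the geometric decay bound from \textbf{(K1)}. For the distance estimate I would argue as follows: consecutive squares in the path are neighbors by construction, so they share at least a boundary point and, by the Good Geometry Lemma (Lemma \ref{goodgeom}$(1)$), they have comparable sidelengths. Repeatedly applying the triangle inequality along the chain gives
$$d(Q_\nu, Q^\#_{\mu(\nu)}) \;\leq\; \sum_{k=1}^{m-1} \mbox{diam}(Q'_k \cup Q'_{k+1}) \;\lesssim\; \sum_{k=1}^{m-1} \delta_{Q'_k}.$$
The geometric decay of $\delta_{Q'_k}$ along the path then bounds this sum by a geometric series whose dominant term is comparable to $\delta_\nu$, giving $d(Q_\nu, Q^\#_{\mu(\nu)}) \lesssim \delta_\nu$.

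I do not foresee any genuine obstacle --- the argument reduces to summing a geometric series of ratio $(1-c) < 1$ coming from \textbf{(K1)}. The only care needed is bookkeeping the universal constants so that the final bounds absorb the factor $\sum_{j \geq 0} (1-c)^j = 1/c$, and confirming via Lemma \ref{goodgeom}$(1)$ that the diameter of the union of two neighboring CZ squares is controlled by (a constant times) either of their sidelengths. No Sobolev, Besov, or PDE machinery enters here: the lemma is a purely combinatorial/geometric consequence of the CZ tree structure together with the assignment $\nu \mapsto \mu(\nu)$ selected just above the statement.
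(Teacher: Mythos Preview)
Your proposal is correct and is exactly the argument the paper intends: sum the geometrically decaying sidelengths along the \textbf{(K1)} path to bound both $d(Q_\nu,Q^\#_{\mu(\nu)})$ and $\delta^\#_{\mu(\nu)}$ by the first term $\delta_\nu$. Note only that the displayed orientation of the path in the statement of \textbf{(K1)} is a typo --- the proof of Proposition~\ref{iskey} and the later usage in Lemma~\ref{globsobineq} both take $Q_\nu$ at index $1$ and $Q^\#$ at index $m$ --- which matches your correct reading that the keystone sits at the small end of the chain.
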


\subsection{Fixing a few points}
\label{points}
Let $Q_\nu \in \Lambda$ be given. We now show how to produce $x_\nu \in \frac{1}{2}Q_\nu$ with $d(x_\nu,E) \geq \frac{1}{5} \delta_\nu$. 

Since $\|3Q_\nu \cap E\|_{\bes} \leq c_1 \delta^{2/p-1}_{Q}$, it follows that $3Q_\nu \cap E \subset \gamma = \{(u,\varphi(u)):u \in \R\}$, where $(u,v)$ are some Euclidean coordinates, and $\|\varphi\|_{\bes(\R)} \leq 2c_1\delta^{2/p-1}_Q$. The BET implies that $\varphi'$ varies by no more than $2Cc_1 \delta^{2/p-1}_Q \delta^{1-2/p}_{Q}$ on the interval $I=\mbox{proj}_{\{(u,0):u \in \R\}} (3Q)$, for some universal constant $C$. Choosing $c_1 < \frac{1}{2000C}$ we can arrange that $\varphi'$ varies by no more than $\frac{1}{100}$ on $I$. We have just shown that $3Q_\nu \cap E$ lies on the graph of a function with derivative varying by no more than $\frac{1}{100}$. Thus, there exists $x_\nu \in \frac{1}{2}Q_\nu$, with $d(x_\nu,E) \geq \frac{1}{5}\delta_\nu$, which we now fix for the remainder of the paper. We denote $E' = \{x_\nu\}_{\nu=1}^K$, which are the collection of \textit{CZ representative points}.

Additionally, for each Keystone square $Q^\#_\mu \in \Lambda^\#$, we have $Q^\#_\mu = Q_{\nu(\mu)}$ for some $1 \leq \nu(\mu) \leq K$. We define $x^\#_\mu = x_{\nu(\mu)}$, and denote $E^\# = \{x^\#_\mu\}_{\mu=1}^{K^\#} \subset E'$ which are the collection of \textit{Keystone representative points}.

\begin{lem}
\label{keyptlem}
The previously constructed $E'$ satisfies $E' \subset 0.99Q^\circ$.
\end{lem}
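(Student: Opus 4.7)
The goal is to show that every CZ representative point $x_\nu$ stays well inside $Q^\circ$, specifically $|x_\nu|_\infty \le 0.495\,\delta_{Q^\circ}$ (recalling $Q^\circ$ is centered at the origin). My plan is a two-constraint geometric argument: one constraint coming from $Q_\nu \subset Q^\circ$, the other from the fact that the parent $Q^+$ of $Q_\nu$ has just been split (so its $3$-dilate must reach the data set $E \subset \tfrac{1}{10} Q^\circ$). Neither constraint alone suffices; their combination does.

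First, since the statement is vacuous in the trivial case $\Lambda = \{Q^\circ\}$ (already postponed to Section~\ref{lastcase}), assume $\Lambda \neq \{Q^\circ\}$. Then every $Q_\nu \in \Lambda$ is a strict dyadic subsquare, so its parent $Q^+$ exists, lies in $Q^\circ$, and failed to be OK. In particular $\|3Q^+ \cap E\|_{\bes} > 0$, which forces the existence of a point $y \in 3Q^+ \cap E$. Using $E \subset \frac{1}{10}Q^\circ$, we have $|y|_\infty \leq \delta_{Q^\circ}/20$, and $y \in 3Q^+$ gives $|y - c_{Q^+}|_\infty \leq \tfrac{3}{2}\delta_{Q^+} = 3\delta_\nu$. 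The triangle inequality then yields the key bound
\[
|c_{Q^+}|_\infty \;\leq\; 3\delta_\nu + \tfrac{1}{20}\delta_{Q^\circ}.
\]
Since $c_\nu$ differs from $c_{Q^+}$ by $\delta_\nu/2$ in each coordinate and $x_\nu \in \tfrac12 Q_\nu$ gives $|x_\nu - c_\nu|_\infty \leq \delta_\nu/4$, combining these bounds produces
\[
|x_\nu|_\infty \;\leq\; \tfrac{15}{4}\delta_\nu + \tfrac{1}{20}\delta_{Q^\circ}. \qquad (\star)
\]

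On the other hand, purely from $Q_\nu \subset Q^\circ$ we get $|c_\nu|_\infty \leq (\delta_{Q^\circ} - \delta_\nu)/2$, hence
\[
|x_\nu|_\infty \;\leq\; \tfrac{1}{2}\delta_{Q^\circ} - \tfrac{1}{4}\delta_\nu. \qquad (\star\star)
\]
The bound $(\star)$ is good when $\delta_\nu$ is small and bad when $\delta_\nu$ is large; $(\star\star)$ behaves oppositely. Taking the minimum and maximizing over $\delta_\nu > 0$, the two linear bounds cross at $\delta_\nu = \frac{9}{80}\delta_{Q^\circ}$, giving the worst-case estimate
\[
|x_\nu|_\infty \;\leq\; \tfrac{151}{320}\,\delta_{Q^\circ} \;<\; 0.495\,\delta_{Q^\circ},
\]
which is exactly the statement $x_\nu \in 0.99 Q^\circ$. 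Applying this to every $\nu$ completes the proof.

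The only conceptual step is recognizing that one must use the CZ-generated constraint on $c_{Q^+}$ (not just containment of $Q_\nu$ in $Q^\circ$); the rest is a short arithmetic verification that the cushion $E \subset \tfrac{1}{10}Q^\circ$ is generous enough to leave room for the $0.99$-dilate. There is no genuine obstacle here—this is a geometric bookkeeping lemma whose role is to ensure later that cutoff and partition-of-unity constructions anchored at $E'$ have room inside $Q^\circ$.
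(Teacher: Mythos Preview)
Your proof is correct and rests on the same key observation as the paper: since the parent $Q^+$ is not OK, $3Q^+$ must meet $E \subset \tfrac{1}{10}Q^\circ$, which prevents $Q_\nu$ from being simultaneously small and close to $\partial Q^\circ$. The paper organizes this as a case split (squares touching $\partial Q^\circ$ versus not), showing boundary squares satisfy $\delta_\nu \geq \tfrac{1}{32}\delta_{Q^\circ}$ and interior squares inherit a buffer of that width; your min-of-two-linear-bounds packaging is equivalent and arguably tidier.
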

\begin{proof}
It will be shown that $\frac{1}{2}Q_\nu \subset 0.99 Q^\circ$ for all $\nu$. From this it easily follows that $x_\nu \in \frac{1}{2}Q_\nu \subset 0.99Q^\circ$

For any $1 \leq\nu\leq K$, either
\begin{description}
\item[(A)] $Q_\nu$ intersects the boundary of $Q^\circ$ OR
\item[(B)] $Q_\nu \subset \mbox{int}(Q^\circ)$
\end{description}
Suppose that $Q_\nu$ satisfies \textbf{(A)}, we then claim that $\delta_\nu \geq \frac{1}{32}\delta_{Q^\circ}$. Otherwise, suppose that $\delta_\nu \leq \frac{1}{64}Q^\circ$. Since $Q_\nu$ intersects the boundary of $Q^\circ$, and $E \subset \frac{1}{10}Q^\circ$, one finds that $3(Q_\nu)^+ \cap E \subset 9Q_\nu \cap E = \emptyset$. This implies that $(Q_\nu)^+$ is OK, which contradicts that $Q_\nu$ is CZ. Thus, $\delta_\nu \geq \frac{1}{32}\delta_{Q^\circ}$ for all $Q_\nu$ satisfying \textbf{(A)}.

Alternatively, suppose that $Q_\nu$ satisfies \textbf{(B)}. Then $Q_\nu \subset 0.99 Q^\circ$ thanks to the analysis from Case \textbf{(A)}, which provides a buffer of width at least $\frac{1}{32}\delta_{Q^\circ}$ between the boundary of $Q^\circ$ and squares $Q_\nu$ as in \textbf{(B)}.

In either case, $\frac{1}{2}Q_\nu \subset 0.99Q^\circ$.
\end{proof}

\subsection{Proof of Proposition \ref{iskey}}
\label{proofiskey}
We first check \textbf{(K1)}: 

We set $Q'_1 = \overline{Q}_0 = Q$, and construct the desired path iteratively starting with $\overline{Q}_0$. From the definition of the Keystone squares, either
\begin{description}
\item[Case 1:] $\overline{Q}_0 \in \Lambda^\#$ OR
\item[Case 2:] There exists $\overline{Q} \in \Lambda$ with $\overline{Q} \cap 100 \overline{Q}_0 \neq \emptyset$ and $\delta_{\overline{Q}} \leq \frac{1}{2} \delta_{\overline{Q}_0}$.
\end{description}
If Case 1 is satisfied, then the path of length 1 given by $Q \leftrightarrow Q^\# :=Q$ trivially satisfies the conditions in \textbf{(K1)}. Alternatively, suppose Case 2. Then, we may choose $\overline{Q}_1 \in \Lambda$ with
\begin{enumerate}
\item $\overline{Q}_1 \cap 100 \overline{Q}_0 \neq \emptyset$ and $\delta_{\overline{Q}_1} \leq \frac{1}{2} \delta_{\overline{Q}_0}$, and
\item $d(\overline{Q}_1,\overline{Q}_0)$ minimal among CZ squares satisfying the above property.
\end{enumerate}

From the Good Geometry of the CZ squares, there is a path: $Q = \overline{Q}_0 = Q'_1 \leftrightarrow Q'_2 \leftrightarrow \cdots Q'_{k_1} = \overline{Q}_1$, with $Q'_k \in \Lambda$, $k_1$ bounded by an absolute constant, and $\delta_{Q'_k} \approx \delta_{\overline{Q}_0}$ for all $1 \leq k \leq k_1$. To see this, connect $\overline{Q}_0$ and $\overline{Q}_1$ with a shortest line segment $\mathfrak{l}$, and take $\{Q'_i\}^{k_1}_{i=1}$ to be the collection of squares in $\Lambda$ which intersect $\mathfrak{l}$, indexed to form a path. By construction of $Q'_i$, $d(Q'_i,\overline{Q}_0) \leq d(Q'_i,\overline{Q}_1)$ for all $1 \leq i \leq k_1 -1$. Thus, $P1$-$2$ of $\overline{Q}_1$ imply that $\delta_{Q'_i} \geq \delta_{\overline{Q}_0}$ for all $2 \leq i \leq k_1$, which is enough to imply the afformentioned properties of the path $\{Q'_i\}^{k_1}_{i=1}$.

Continuing iteratively, this leads to a sequence of ``marker" squares $\overline{Q}_j \in \Lambda$ with $\delta_{\overline{Q}_j} \leq 2^{-j-1}\delta_{\overline{Q}_0}$, as well as a sequence of ``intermediary" squares $Q'_k \in \Lambda$, and the following path:
\begin{equation}
\label{e6}
Q = \overline{Q}_0 = Q'_1 \leftrightarrow \cdots \overline{Q}_j=Q'_{k_j} \leftrightarrow \cdots \overline{Q}_{j+1} = Q'_{k_{j+1}} \leftrightarrow \cdots
\end{equation}
Here, $k_{j+1}-k_j$ is bounded by an absolute constant. Since the sidelength of $\overline{Q}_k$ is exponentially decreasing in $k$, this path must terminate due to the finiteness of $\Lambda$. For the path to terminate, there must be a square $Q'_{k_n} = \overline{Q}_{n} \in \Lambda^\#$ for some $n$ (i.e., we eventually fall into Case 1). We set $Q^\# = Q'_{k_n}$. From the Good Geometry of $\Lambda$, we find that $\delta_{Q'_k} \approx \delta_{Q'_{k_j}}$ for all $k_j \leq k \leq k_{j+1}$. Along with the control on $k_{j+1} - k_j$ by an absolute constant, and $\delta_{Q'_{k_j}}  \leq 2^{-j} \delta_Q$, this proves that the path given in \eqref{e6} satisfies \textbf{(K1)}.

We now check \textbf{(K2)}: Fix $Q^\# \in \Lambda^\#$. To check that $9Q^\#$ is Rough relative to $E$, we set $E_0 = 9Q^\# \cap E$, and assume that $9Q^\#$ does not satisfy \textbf{(R1)} relative to $E_0$ (for otherwise $9Q^\#$ would be Rough relative to $E$, and we would be done). That is, for all $x_1,x_2,x_3,x_4 \in E_0$,
\begin{equation}
\min \left\{\left| \frac{x_1 - x_2}{|x_1 - x_2|} - \frac{x_3 - x_4}{|x_3 - x_4|} \right|, \left|\frac{x_1 - x_2}{|x_1 - x_2|} + \frac{x_3 - x_4}{|x_3 - x_4|} \right| \right\} \leq c_2
\label{lipprop}
\end{equation}
for a sufficiently small universal constant $c_2$. From here, our goal will be to show that $9Q^\#$ satisfies \textbf{(R2)} relative to $E_0$, which will imply \textbf{(K2)}.

Consider $Q \in \Lambda$ with $Q \cap 9Q^\# \neq \emptyset$. From the definition of the Keystone squares, $\delta_Q \geq \delta_{Q^\#}$. We now show that $\delta_Q \leq 100 \delta_{Q^\#}$: For sake of contradiction, suppose that $\delta_Q \geq 100 \delta_{Q^\#}$. Then $Q \cap 9Q^\# \neq \emptyset$ implies $\tilde{Q} \cap \tilde{Q}^\# \neq \emptyset$, and thus by the Good Geometry of the CZ squares, $Q \leftrightarrow Q^\#$. Again by the Good Geometry, we have $\delta_Q \leq 2\delta_{Q^\#}$, which is clearly a contradiction. Thus, $\delta_Q \leq 100 \delta_{Q^\#}$ for all $Q$ with $Q \cap 9Q^\# \neq \emptyset$.

Since any $Q \in \Lambda$ with $Q \cap 9Q^\# \neq \emptyset$ satisfies $\delta_Q \geq \delta_{Q^\#}$, there are at most 200 squares $Q \in \Lambda$ satisfying $Q \cap 9Q^\# \neq \emptyset$. Denote the collection of all such squares by $Q^1,\cdots,Q^n \in \Lambda$ ($n \leq 200$). Consider those squares $Q^i$ with $E_0 \cap Q^i \neq \emptyset$; we suppose that these squares have been labeled as: $Q^1 \cdots Q^m$, for $m \leq n \leq 200$. Since $Q^i$ is OK for each $i = 1,2,\cdots,m$, we have $\|3Q^i \cap E_0\|_{\bes} \leq \|3Q^i \cap E\|_{\bes} \leq c_1 \delta^{2/p-1}_{Q^i} \leq c_1 \delta^{2/p-1}_{Q^\#}$.

We now recollect our current setting:

\begin{description}
\item[(A1)] $E_0 \subset 9Q^\#$ finite, with $Q^\# \in \Lambda^\#$;
\item[(A2)] $\min\{|\frac{x_1 - x_2}{|x_1 - x_2|} - \frac{x_3 - x_4}{|x_3 - x_4|}|, |\frac{x_1 - x_2}{|x_1 - x_2|} + \frac{x_3 - x_4}{|x_3 - x_4|}|\} \leq c_2$ for all $x_1,x_2,x_3,x_4 \in E_0$ with $x_1 \neq x_2$, and $x_3 \neq x_4$;
\item[(A3)] $9Q^\# \subset \displaystyle \bigcup_{i=1}^n Q^i$, with $Q_i \in \Lambda$, and $n \leq 200$;
\item[(A4)] $\|3Q^i \cap E_0\|_{\bes} \leq c_1 \delta^{2/p-1}_{Q^\#}$ for $i=1,\cdots,m$, and $3Q^i \cap E_0 = \emptyset$ for $i=m+1,\cdots,n$;
\item[(A5)] $\delta_{Q^\#} \leq \delta_{Q^i} \leq 100 \delta_{Q^\#}$ for $i=1,\cdots,n$.
\end{description}

From \textbf{(A1-5)}, we will prove that $9Q^\#$ satisfies \textbf{(R2)} with constants $c=c_1$ and $c'=c_3$ (as long as $c_1$ and $c_2$ are sufficiently small depending on $c_3$). Through rescaling \textbf{(A1-5)} and the desired conclusion, we may assume that $\delta_{Q^\#} = 1$; this assumption will be dropped after we prove \textbf{(R2)}.

First, we show that $\|E_0\|_{\bes} \leq 9^{2/p-1} c_3$. If $E_0$ contains at most one point, then this statement is trivial. Thus, we assume that $E_0$ contains at least two distinct points. Fix distinct $y_1, y_2 \in E_0$, and choose unique Euclidean coordinates $(z_1,z_2)$ so that $y_1,y_2 \in \{z_2 = 0\}$.

Set $E'_i = 3Q^i \cap E_0$, and $\overline{E}'_i = \mbox{proj}_{\{z_2=0\}} E'_i$ for $i = 1,2,\cdots,m$. We will construct interpolating Besov curves with small semi-norm through $E'_i$ for each $i$ using the coordinate system $(z_1,z_2)$.

To do so, note from \textbf{(A4)} that $\|E'_i\|_{\bes(\R)} \leq c_1$, while \textbf{(A2)} and the definition of $(z_1,z_2)$ imply that $(z_1,z_2)$ satisfies the hypotheses of Lemma \ref{nicecurve} for the set $E'_i$ with constant $\kappa_2 = 100c_2$. Thus, Lemma \ref{nicecurve} applies and gives us $\varphi_i \in \besfull(\R)$ with
\begin{enumerate}
\item $E'_i \subset \{(z_1,\varphi_i(z_1))_{z_1z_2} : z_1 \in \R\}$, and
\item $\|\varphi_i\|_{\besfull(\R)} \lesssim c_1 + c_2$.
\end{enumerate}
Now, set $E_j = E_0 \cap Q^j$, $\overline{E}_j = \mbox{proj}_{\{z_2=0\}} E_j$, and $\overline{E}_0 = \mbox{proj}_{\{z_2=0\}} E_0$. Let $I_{j}$ be the convex hull of $\overline{E}_j$ contained in the line $\{z_2 = 0\}$. For $I_j$ taking the form $[a_j,b_j]$, we set $\tilde{I}_j = [a_j - \frac{1}{10}\delta_{Q^j}, b_j + \frac{1}{10}\delta_{Q^j}]$. From \textbf{(A5)}, we find that $|\tilde{I}_j| \geq \frac{1}{5} \delta_{Q^j} \geq \frac{1}{5}$. Furthermore, note that $\overline{E}_0 \subset \bigcup_j \overline{E}_j \subset \bigcup_j I_j$.

For $j=1,\cdots, m$, we let $\theta_j \in C_c^{\infty}(\tilde{I}_j)$ be a family of bump functions which satisfy $(1)$ $|\frac{d^k}{dx^k} \theta_j| \lesssim 1$ for $0 \leq k \leq 2$, and $(2)$ $\displaystyle \sum_{j=1}^m \theta_j(\overline{x}_0) =  1$ for $\overline{x}_0 \in \overline{E}_0$.

We briefly comment on the existence of a family of this kind: Initially, we let $\tilde{\theta}_j \in C^\infty_c (\tilde{I}_j)$ be any function which satisfies $(1)$ $\tilde{\theta}_j \equiv 1$ on $I_j$, and $(2)$ $| d^k/dx^k \tilde{\theta}_j| \lesssim 1$ for $k \leq 2$. Notice that $\psi = \sum_{j=1}^m \tilde{\theta}_j$ satisfies $|\psi| \lesssim 1$, and $\psi \geq 1$ on $\bigcup I_j$. Now, fix $\eta \in C^\infty(\R)$ with $\eta(w) \equiv w$ for $w \geq 1$, and $\eta \geq \frac{1}{2}$ for $w \leq 1$. For $j=1,\cdots, m$, define $\theta_j = \tilde{\theta}_j / \eta \circ \psi$, which satisfies the required properties.

Having collected the necessary materials, we now set $\displaystyle \varphi = \sum_{j=1}^{m} \theta_j \varphi_j$, and show that
\begin{enumerate}
\item $\|\varphi\|_{\bes(\R)} \lesssim c_1 + c_2$, and
\item $E_0 \subset \{(z_1,\varphi(z_1)): z_1 \in \R\}$.
\end{enumerate}

$P1$ of $\varphi$ follows from $P1$ of $\theta_j$, the fact that $m \leq 200$, and $P2$ of $\varphi_j$, by the following bound (note that control on the $\besfull(\R)$ norm of $\varphi_j$ is crucially used in the second ``$\lesssim$").
\begin{equation*}
\|\varphi\|_{\bes(\R)} \lesssim \sum_{j=1}^m \|\theta_j \varphi_j\|_{\bes(\R)} \lesssim \sum_{j=1}^m \|\varphi_j\|_{\besfull(\R)} \lesssim c_1 + c_2.
\end{equation*}
We now verify $P2$ of $\varphi$: Pick an arbitrary $x_0 \in E_0$, and write $x_0 = (\overline{x}_0,\overline{y}_0)_{z_1z_2}$. Alternatively, $\overline{x}_0$ is the projection of $x_0$ onto the line $\{z_2=0\}$. Suppose that the following claim holds.

\textbf{Claim 1}: If $\overline{x}_0 \in \tilde{I}_j$ for some $1 \leq j \leq m$, then $x_0 \in 3Q^j$.

We will return to the proof of Claim 1 in a moment, but first, we use it to prove $P2$ of $\varphi$. Consider $\varphi(\overline{x}_0) = \sum_{j} \theta_j(\overline{x}_0) \varphi_j(\overline{x}_0)$, where the sum need only be taken over those $j$ with $\overline{x}_0 \in \tilde{I}_j$ (this follows from the support properties of $\theta_j$). Pick $j$ with $\overline{x}_0 \in \tilde{I}_j$, and apply Claim 1 to find that $x_0 \in 3Q^j$, and so $x_0 \in 3Q^j \cap E_0 = E'_j$. Thus, $P1$ of $\varphi_j$ implies that $\varphi_j(\overline{x}_0) = \overline{y}_0$. Consequently, 
$$\varphi(\overline{x}_0) = \sum_{j} \theta_j(\overline{x}_0) \varphi_j(\overline{x}_0) = \sum_{j} \theta_j(\overline{x}_0) \overline{y}_0 = \overline{y}_0,$$
with the last ``$=$" following from $P2$ of $\theta_j$. Therefore, $x_0 = (\overline{x}_0,\overline{y}_0) = (\overline{x}_0,\varphi(\overline{x}_0))$, which proves $P2$ of $\varphi$.

For the proof of Claim 1, suppose that $x_0=(\overline{x}_0,\overline{y}_0) \in E_0$ satisfies $\overline{x}_0 \in \tilde{I}_j = [a_j-\frac{1}{10}\delta_{Q^j}, b_j + \frac{1}{10}\delta_{Q^j}]$, where $I_j=[a_j,b_j]$ is the convex hull of $\overline{E}_j$. Let $\overline{x}_j \in \overline{E}_j$ be an endpoint of $I_j$ which is closest to $\overline{x}_0$, and thus satisfies 
$$|\overline{x}_0 - \overline{x}_j| \leq \max\{\mbox{diam}(I_j)/2 , \frac{1}{10} \delta_{Q^j} \} \leq \frac{\sqrt{2}}{2}\delta_{Q^j}.$$
Since $\overline{E}_j =  \mbox{proj}_{\{z_2=0\}} E_j$, we find $\overline{y}_j \in \R$ for which $x_j = (\overline{x}_j,\overline{y}_j) \in E_j = E_0 \cap Q^j$. By choosing $c_2$ sufficiently small, and examining the definition of the coordinates $(z_1,z_2)$, \textbf{(A2)} implies that $E_0$ lies on the graph of a Lipschitz curve in $(z_1,z_2)$ coordinates: $E_0 \subset \{(z_1,\varphi_0(z_1))_{z_1z_2}:z_1 \in \R\}$ with $\|\varphi_0\|_{\mbox{Lip}(\R)} \leq \frac{1}{1000}$. Thus,
$$|\overline{y}_0 - \overline{y}_j| \leq \frac{1}{1000}|\overline{x}_0 - \overline{x}_j| \leq \frac{1}{1000} \delta_{Q^j} \leq \frac{1}{10},$$ 
since $\delta_{Q^j} \leq 100$. Therefore,
$$|x_0 - x_j|^2 \leq \frac{1}{2} \delta^2_{Q^j} + \frac{1}{100} \leq \delta^2_{Q^j},$$ 
since $1 \leq \delta_{Q^j}$. Thus, $|x_0 - x_j| \leq \delta_{Q^j}$ with $x_j \in Q^j$. Therefore $x_0 \in 3Q^j$. This completes the proof of Claim 1.

Both properties of $\varphi$ have been established. From $P1$ and $P2$ of $\varphi$, we find that $\|E_0\|_{\bes} \lesssim c_1 + c_2$. Recall (see \textbf{OR}) that $c_1$ and $c_2$ are allowed to depend on $c_3$, and thus by choosing $c_1$ and $c_2$ sufficiently small, we can arrange for $\|E_0\|_{\bes} \leq 9^{2/p-1} c_3$ as desired. This establishes half of property \textbf{R2}. For the other half, we notice that 
$$\|9Q^\# \cap E\|_{\bes} \geq \|3(Q^\#)^+ \cap E\|_{\bes} \geq c_1 \delta^{2/p-1}_{(Q^\#)^+} \geq 9^{2/p-1} c_1,$$
since $(Q^\#)^+$ is not OK. Thus, $9Q^\#$ satisfies \textbf{R}$(c_1,c_3,c_2)$ relative to $E_0$. This concludes the section where we assume $\delta_{Q^\#}=1$. The proof of \textbf{(K2)} is now complete.

We finish by establishing \textbf{(K3)}: Suppose that $10Q^\# \cap 10\overline{Q}^\# \neq \emptyset$ for some $Q^\#,\overline{Q}^\# \in \Lambda^\#$. Without loss of generality we may suppose that $\delta_{Q^\#} \geq \delta_{\overline{Q}^\#}$. Thus, $\overline{Q}^\# \cap 100Q^\# \neq \emptyset$. From the definition of $\Lambda^\#$ this implies $\delta_{\overline{Q}^\#} \geq \delta_{Q^\#}$, and so $\delta_{Q^\#} = \delta_{\overline{Q}^\#}$. Because the interiors of the squares in $\Lambda$ are disjoint, there can be no more than a universal constant number of squares $\overline{Q}^\# \in \Lambda^\#$ that satisfy $10\overline{Q}^\# \cap 10Q^\# \neq \emptyset$ and $ \delta_{\overline{Q}^\#} = \delta_{Q^\#}$. This completes the proof of \textbf{(K3)}. The proof of Proposition \ref{iskey} is complete.
\section{A Modified Extension Problem}
\label{mep}

\subsection{An inequality}

We start by establishing an inequality, which generalizes the Sobolev Embedding Theorem to a more global setting. Its proof ties in closely to the geometry of $\Lambda$ and $\Lambda^\#$ established in Proposition \ref{iskey}.

\begin{lem}[Global Sobolev inequality]
\label{globsobineq}
Suppose that $F \in \WR$. Then
\begin{align*}
&\sum_{\nu=1}^{K} |\nabla F(x_\nu) - \nabla F(x^\#_{\mu(\nu)})|^p \delta_\nu^{2-p} \lesssim \|F\|^p_{\WR},\\
&\sum_{\nu=1}^{K} |F(x_\nu) - J_{x^\#_{\mu(\nu)}} F(x_\nu)|^p \delta_\nu^{2-2p} \lesssim \|F\|^p_{\WR}.
\end{align*}
\end{lem}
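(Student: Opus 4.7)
My plan is to telescope the difference $\nabla F(x_\nu) - \nabla F(x^\#_{\mu(\nu)})$ along the path from $Q_\nu$ down to $Q^\#_{\mu(\nu)}$ furnished by \textbf{(K1)} of Proposition \ref{iskey}, apply the SET pairwise on consecutive path squares (via Remark \ref{twosquares}), aggregate the resulting local bounds by a weighted $\ell^p$-H\"older inequality that exploits the geometric decay of the path sidelengths, and finally swap the order of summation to reduce everything to $\sum_{Q \in \Lambda} \|F\|^p_{L^{2,p}(\tilde Q)} \lesssim \|F\|^p_{L^{2,p}(\R^2)}$ via the Bounded Intersection Property of $\{\tilde Q_\nu\}$.

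Concretely, for each $\nu$ fix a path $Q_\nu = R^\nu_1 \leftrightarrow \cdots \leftrightarrow R^\nu_{m_\nu} = Q^\#_{\mu(\nu)}$ as in \textbf{(K1)}, so that $\delta_{R^\nu_j} \leq C(1-c)^{j-1}\delta_\nu$ and all path squares lie within diameter $\lesssim \delta_\nu$ of $Q_\nu$. Pick connector points $y^\nu_j \in \tilde R^\nu_j \cap \tilde R^\nu_{j+1}$ (nonempty by Good Geometry) and set $y^\nu_0 := x_\nu$, $y^\nu_{m_\nu} := x^\#_{\mu(\nu)}$. Remark \ref{twosquares} yields
\[
|\nabla F(y^\nu_j) - \nabla F(y^\nu_{j-1})| \lesssim \delta_{R^\nu_j}^{1-2/p}\,\|F\|_{L^{2,p}(\tilde R^\nu_{j-1} \cup \tilde R^\nu_j)}.
\]
Setting $r := (1-c)^{1-2/p} \in (0,1)$, the decay gives $\delta_{R^\nu_j}^{1-2/p} \lesssim r^{j-1}\delta_\nu^{1-2/p}$. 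Telescoping and applying the standard weighted H\"older inequality $(\sum_j r^{j-1} b_j)^p \lesssim \sum_j r^{j-1} b_j^p$ (valid since $\sum_j r^{j-1} < \infty$) produces
\[
|\nabla F(x_\nu) - \nabla F(x^\#_{\mu(\nu)})|^p \lesssim \delta_\nu^{p-2} \sum_j r^{j-1}\|F\|^p_{L^{2,p}(\tilde R^\nu_j)}.
\]

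Multiplying by $\delta_\nu^{2-p}$, summing over $\nu$, and swapping the order of summation, the LHS is controlled by $\sum_{Q \in \Lambda} \|F\|^p_{L^{2,p}(\tilde Q)} \cdot M(Q)$, where $M(Q) := \sum_{(\nu,j):\,R^\nu_j = Q} r^{j-1}$. The crux is the bound $M(Q) \lesssim 1$: for any $\nu$ with $Q$ on its path at position $j$, summing sidelengths along the path shows $Q_\nu \subset B(c_Q, C\delta_\nu)$, and the neighbor factor-of-$2$ bound forces $\delta_\nu \leq 2^{j-1}\delta_Q$; disjointness of $\{Q_\nu\}$ combined with an area count limits the number of such $\nu$ at each dyadic scale $\delta_\nu \approx 2^k \delta_Q$ to $O(1)$. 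Since $j - 1 \geq k$ implies $r^{j-1} \leq r^k$, I conclude $M(Q) \lesssim \sum_{k \geq 0} r^k \lesssim 1$; the Bounded Intersection Property of $\{\tilde Q_\nu\}$ then finishes the first inequality.

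The second inequality follows analogously from the telescope $F(x_\nu) - J_{x^\#_{\mu(\nu)}} F(x_\nu) = \sum_j \bigl[J_{y^\nu_j} F(x_\nu) - J_{y^\nu_{j-1}} F(x_\nu)\bigr]$. Each jet difference splits as $[F(y^\nu_j) - J_{y^\nu_{j-1}} F(y^\nu_j)] + [\nabla F(y^\nu_j) - \nabla F(y^\nu_{j-1})](x_\nu - y^\nu_j)$: the SET bounds the first term by $\delta_{R^\nu_j}^{2-2/p}\|F\|_{L^{2,p}}$ and the second by $\delta_{R^\nu_j}^{1-2/p}\delta_\nu\|F\|_{L^{2,p}}$ (using $|x_\nu - y^\nu_j| \lesssim \delta_\nu$, as the entire path lies within diameter $\lesssim \delta_\nu$ of $Q_\nu$). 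Since $\delta_{R^\nu_j} \leq \delta_\nu$, the second term dominates, yielding $|F(x_\nu) - J_{x^\#_{\mu(\nu)}} F(x_\nu)| \lesssim \delta_\nu \cdot [\text{the same telescoped sum from the first inequality}]$; raising to the $p$-th power, the extra factor $\delta_\nu^p$ precisely compensates the change in weight from $\delta_\nu^{2-p}$ to $\delta_\nu^{2-2p}$, so the same swap-of-summation closes the proof. The main obstacle throughout is the multiplicity bound $M(Q) \lesssim 1$: it rests on the delicate interplay of the geometric sidelength decay provided by \textbf{(K1)} with the area-counting bound on disjoint CZ squares of comparable or larger size near $Q$.
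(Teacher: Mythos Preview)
Your proof is correct and follows essentially the same strategy as the paper's: telescope along the \textbf{(K1)} path, exploit the geometric decay of sidelengths via a weighted H\"older inequality, swap the order of summation, and control the resulting multiplicity $M(Q)$ by an area-counting argument over dyadic scales $\delta_\nu \approx 2^k \delta_Q$. The paper packages the telescoping-plus-swap step into an abstract auxiliary inequality for arbitrary sequences $A_\nu$ (your argument is this inequality with $A_\nu = \nabla F(x_\nu)$ specialized from the outset), and for the second statement the paper rewrites $F(x_\nu) - J_{x^\#_{\mu(\nu)}}F(x_\nu)$ in terms of $J_{x_\nu}F(x^\#_{\mu(\nu)}) - F(x^\#_{\mu(\nu)})$ before applying that inequality, whereas you telescope the jets $J_{y^\nu_j}F(x_\nu)$ directly; your route here is slightly more transparent but amounts to the same computation.
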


\begin{proof}
We recall \textbf{(K1)}, which led us to define the map $\mu:\{1,\cdots,K\} \rightarrow \{1,\cdots,K^\#\}$ with: For each $\nu \in \{1,2,\cdots,K\}$, there exist indicies $k^\nu_1, k^\nu_2, \cdots k^\nu_{N_\nu} \in \{1,2,\cdots K\}$ with
\begin{align}
\label{expdec1}
Q_\nu = Q_{k^\nu_1} \leftrightarrow \cdots \leftrightarrow Q_{k^\nu_{N_\nu}} = Q^\#_{\mu(\nu)} \mbox{, and} \notag{} \\
\delta_{k^\nu_j} \lesssim (1-c)^{j-i} \delta_{k^\nu_i} \; \mbox{for all} \; 1\leq i < j \leq N_\nu,
\end{align}
for some $0<c<1$.

We now derive an auxiliary inequality from which the lemma follows almost immediately. Let $A_{\nu} \in \R^d$ be given for $\nu \in \{1,2,\cdots,K\}$, and $d=1$ or $2$. Let $\beta>0$ be a universal constant. Denote $A_{\nu,n} = A_{k^\nu_n}$, and $\delta_{\nu,n} = \delta_{k^\nu_n}$, for $\nu \in \{1,2,\cdots,K\}$ and $1 \leq n \leq N_\nu$. We establish the following bound:
\begin{equation}
\label{globsobineq0}
\sum_{\nu=1}^{K} |A_{\nu, N_\nu} - A_{\nu, 1}|^p \delta_\nu^{-\beta} \lesssim \sum_{k \leftrightarrow k'} |A_k - A_{k'}|^p \delta_k^{-\beta}.
\end{equation}
Let $\epsilon>0$ be a sufficiently small constant depending on $\beta$ and $p$. Then
\begin{align}
\label{globsobineq1}
\sum_{\nu=1}^{K} |A_{\nu, N_\nu} - A_{\nu, 1}|^p \delta_\nu^{-\beta} & = \sum_{\nu=1}^{K} \left| \sum_{n=1}^{N_\nu-1}\left[ A_{\nu,n} - A_{\nu,(n+1)} \right] \right|^p \delta_\nu^{-\beta}\\
& = \sum_{\nu=1}^{K} \left| \sum_{n=1}^{N_\nu-1}\left[ A_{\nu,n} - A_{\nu ,(n+1)} \right] \delta^{-\epsilon}_{\nu , n} \delta^\epsilon_{\nu , n} \right|^p \delta_\nu^{-\beta} \notag{}\\
& \lesssim \sum_{\nu=1}^{K}  \delta_\nu^{-\beta}  \sum_{n=1}^{N_\nu-1} |A_{\nu,n} - A_{\nu ,(n+1)}|^p \delta^{-\epsilon p}_{\nu,n}  \left[ \sum_{n=1}^{N_\nu - 1} \delta^{\epsilon p'}_{\nu, n} \right]^{p/p'}. \notag{}
\end{align}
Now, for a fixed $\nu \in \{1,2,\cdots,K\}$, \eqref{expdec1} implies that
\begin{align*}
\sum_{n=1}^{N_\nu - 1} \delta^{\epsilon p'}_{\nu, n} \lesssim \sum_{n=1}^{N_\nu - 1}  (1-c)^{n-1} \delta^{\epsilon p'}_{\nu, 1} \lesssim \delta^{\epsilon p'}_{\nu,1} = \delta^{\epsilon p'}_\nu.
\end{align*}
Hence the right hand side of \eqref{globsobineq1} may be bounded by
\begin{equation}
\label{globsobineq2}
\sum_{\nu} \delta_\nu^{-\beta + \epsilon p} \sum_{n=0}^{N_\nu-1} |A_{\nu,n} - A_{\nu ,(n+1)}|^p \delta^{-\epsilon p}_{\nu,n} \lesssim \sum_{k \leftrightarrow k'} |A_k - A_{k'}|^p \delta_k^{-\epsilon p} Y_k,
\end{equation}
with $\displaystyle Y_k = \sum_{\nu \in I_k} \delta^{-\beta+\epsilon p}_{\nu}$, and 
$$I_k = \{ \nu : \exists \; n \in \{1,2,\cdots,N_\nu\} \; \mbox{with} \; k = k^\nu_n \}.$$ 
\eqref{globsobineq2} followed from the previous estimate by changing the order of summation, since for each $\nu$ the list $k^\nu_1,\cdots,k^\nu_{N_\nu}$ has at most an absolute constant number of repetitions (a consequence of \eqref{expdec1}). Consider $\nu \in I_k$, and choose $1 \leq n \leq N_\nu$ with $k=k^\nu_n$. Then, \textbf{(K1)} implies that 
$$d(Q_\nu,Q_k) \lesssim \sum_{m=2}^{n-1} \mbox{diam}(Q_{k^\nu_m}) \lesssim \sum_{m=2}^{n-1} (1-c)^{m-1} \delta_{Q_{k^\nu_1}} \lesssim \delta_{\nu}, $$
and hence for $C_1$ a sufficiently large universal constant we have
\begin{equation}
\label{gsob1}
Q_\nu \subset \{x \in \R^2 : d(x,Q_k) \leq C_1 \delta_\nu\}.
\end{equation}
In particular, we find for each $Q_k$, and $\delta>0$, there are at most $\tilde{C}$ squares $Q_\nu$ with $\nu \in I_k$ and $\delta_\nu = \delta$. Moreover, we find from \eqref{gsob1} and the Good Geometry of the CZ squares that there are no such squares with $\delta \leq \frac{1}{4C_1}\delta_k$.

Let $\epsilon>0$ be sufficiently small so that $-\beta + \epsilon p < 0$. Using the preceding remarks,
\begin{equation}
\label{expsum}
Y_k = \sum_{\nu \in I_k} \delta^{-\beta+\epsilon p}_{\nu} \leq \sum_{ \substack{j \geq -\log{(4C_1)} \\ \delta = 2^j \delta_k}}\sum_{\substack{\nu \in I_k \\ \delta_{\nu}=\delta}} \delta^{-\beta+\epsilon p} \lesssim \delta^{-\beta + \epsilon p}_k.
\end{equation}
Finally, \eqref{expsum} implies that the right hand side of \eqref{globsobineq2} may be bounded by $\sum_{k \leftrightarrow k'} |A_k - A_{k'}|^p \delta^{-\beta}_k$. This completes the proof of \eqref{globsobineq0}. We are now ready to prove the two main statements of the lemma.

\textbf{Statement 1:}
We let $A_\nu = \nabla F(x_\nu) \in \R^2$, and set $\beta = p-2$. From \eqref{globsobineq0}, one finds that
\begin{align}
\label{globsobineq4}
\sum_{\nu=1}^{K} |\nabla F(x_\nu) & - \nabla F(x^\#_{\mu(\nu)})|^p \delta_\nu^{2-p}  \lesssim \sum_{k \leftrightarrow k'} |\nabla F(x_k) - \nabla F(x_{k'})|^p \delta^{2- p}_k \\
& \lesssim \sum_{k \leftrightarrow k'} \|F\|^p_{L^{2,p}(\tilde{Q}_k \cup \tilde{Q}_{k'})} \lesssim \sum_k \|F\|^p_{L^{2,p}(\tilde{Q}_k)}  \lesssim \|F\|^p_{L^{2,p}(\R^2)}. \notag{}
\end{align}
Here, the second ``$\lesssim$" follows from the SET for the domain $\tilde{Q}_k \cup \tilde{Q}_{k'}$ (see Remark \ref{twosquares}), and the third and fourth ``$\lesssim$" follow from the Bounded Intersection Property of the squares $\{\tilde{Q}_\nu\}^K_{\nu=1}$. This completes the proof of the first inequality in the statement of the lemma.

\textbf{Statement 2:}
Now, let $A_\nu = J_{x_\nu} F(x^\#_{\mu(\nu)})$, and $\beta = 2p-2$. Notice that
\begin{align*}
\sum_{\nu=1}^{K} |F(x_\nu) - J_{x^\#_{\mu(\nu)}} F(x_\nu)|^p \delta_\nu^{2-2p} & \lesssim \sum_{\nu=1}^{K} |J_{x_\nu}F(x^\#_{\mu(\nu)}) - F(x^\#_{\mu(\nu)})|^p \delta_\nu^{2-2p} \\
& + \sum_{\nu=1}^{K} |(\nabla F(x_\nu) - \nabla F(x^\#_{\mu(\nu)}))(x^\#_{\mu(\nu)} - x_\nu)|^p \delta_\nu^{2-2p} \\
& \lesssim \sum_{\nu=1}^{K} \left[ |J_{x_\nu}F(x^\#_{\mu(\nu)}) - F(x^\#_{\mu(\nu)})|^p \delta_\nu^{2-2p} + |\nabla F(x_\nu) - \nabla F(x^\#_{\mu(\nu)})|^p \delta_\nu^{2-p} \right],
\end{align*}
with the last inequality following from Lemma \ref{stdkeyprops}, which implies that $|x^\#_{\mu(\nu)} - x_\nu| \lesssim \delta_\nu$. Upon applying \eqref{globsobineq0} and \eqref{globsobineq4} we can bound the right hand side by
\begin{align*}
\sum_{\nu=1}^{K} |J_{x_\nu}F(x^\#_{\mu(\nu)}) & - F(x^\#_{\mu(\nu)})|^p \delta_\nu^{2-2p} + \|F\|^p_{L^{2,p}(\R^2)} \lesssim \sum_{k \leftrightarrow k'} |J_{x_k} F(x^\#_{\mu(k)}) - J_{x_{k'}} F(x^\#_{\mu(k')})|^p \delta^{2-2p}_k + \|F\|^p_{L^{2,p}(\R^2)} \\
& \lesssim \sum_{k \leftrightarrow k'} |J_{x_k} F(x_k) - J_{x_{k'}} F(x_k)|^p \delta^{2-2p}_k \notag{} + \sum_{k \leftrightarrow k'} |\nabla F(x_k) - \nabla F(x^\#_{\mu(k)})|^p |x_k - x^\#_{\mu(k)}|^p \delta^{2-2p}_k \\
& + \|F\|^p_{L^{2,p}(\R^2)} \lesssim \sum_{k \leftrightarrow k'} \|F\|^p_{L^{2,p}(\tilde{Q}_k \cup \tilde{Q}_{k'})} + \|F\|^p_{\WR} \lesssim \|F\|^p_{\WR}.
\end{align*}
This completes the proof of the second statement, and thus the lemma as well.
\end{proof}

\subsection{Choosing an extension whose jets are ``constant'' along paths}

Let $L^\#=(L^\#_x)_{x \in E^\#} \in Wh(E^\#)$ be given. Define the \textit{constant-path extension} of $L^\#$ to be $L \in Wh(E')$ given by
\begin{equation*}
L_{x_\nu} = L^\#_{x^\#_{\mu(\nu)}} \quad \forall \nu = 1,2,\ldots,K,
\end{equation*}
with $\mu(\nu)$ defined as in Section \ref{CZ}. For $1\leq\mu\leq K^\#$, and $1 \leq \nu \leq K$, we denote $L^\#_\mu = L^\#_{x^\#_\mu}$, and $L_\nu = L_{x_\nu}$

\textbf{A remark on our notation:} The passage between a Whitney Field with and without a $\#$ always means that the one Whitney Field is related to the other through its constant-path extension. Also, the indexing present in the above definition extends for all jets in either $Wh(E')$ of $Wh(E^\#)$.

One of the key ideas in this paper is the addition of additional linear conditions on proposed Sobolev interpolants of $f$, whose purpose is mod out certain degrees of freedom in the search for our extension. It is important for these extra conditions to be natural, in the sense that the optimal norm of an extension which satisfies them is not much larger than the optimal norm of an interpolant of $f$. We say that $F \in L^{2,p}(\R^2)$ satisfies the \textit{constant-path property} (or CPP) iff $J_{x_\nu}F = J_{x^\#_{\mu(\nu)}}F $ for all $1 \leq \nu \leq K$.

\begin{lem}
\label{modlem}
Let $f:E \rightarrow \R$. There exists $\widehat{F} \in \WR$ with 
\begin{enumerate}
\item $\widehat{F}|_E=f$;
\item $\|\widehat{F}\|_{\WR} \lesssim \|f\|_{\WR|_E}$;
\item $\widehat{F}$ satisfies the constant-path property.
\end{enumerate}
\end{lem}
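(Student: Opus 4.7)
The plan is to start with any near-optimal interpolant $F$ of $f$ (so $F|_E = f$ and $\|F\|_{L^{2,p}(\R^2)} \lesssim \|f\|_{L^{2,p}(\R^2)|_E}$), and then perform a local surgery near each CZ representative point $x_\nu$ that forces the $1$-jet at $x_\nu$ to agree with the $1$-jet at $x^\#_{\mu(\nu)}$. The cost of this surgery will be controlled by exactly the quantities that appear on the left-hand side of the Global Sobolev Inequality (Lemma \ref{globsobineq}), which is what makes the argument go through. Without loss of generality we refine the choice of $\mu(\cdot)$ fixed in Section \ref{alg} so that if $Q_\nu = Q^\#_\mu \in \Lambda^\#$, then $\mu(\nu) = \mu$; this is consistent with \textbf{(K1)} via the trivial length-$1$ path.

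Set $L_\nu := J_{x^\#_{\mu(\nu)}} F$ and $M_\nu := L_\nu - J_{x_\nu} F \in P$. For each $\nu$ pick a cutoff $\theta_\nu \in C_c^\infty(B(x_\nu, \delta_\nu/100))$ with $\theta_\nu \equiv 1$ on a smaller ball around $x_\nu$ and $|\nabla^k \theta_\nu| \lesssim \delta_\nu^{-k}$ for $k \leq 2$. Section \ref{points} guarantees $d(x_\nu,E) \geq \delta_\nu/5$, and the Good Geometry (Lemma \ref{goodgeom}) together with $x_\nu \in \tfrac12 Q_\nu$ guarantees $|x_\nu - x_{\nu'}| \gtrsim \max(\delta_\nu,\delta_{\nu'})$ for $\nu \neq \nu'$; hence the balls $B(x_\nu, \delta_\nu/100)$ are pairwise disjoint and each is disjoint from $E$. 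Define
\begin{equation*}
G_\nu = \theta_\nu M_\nu, \qquad \widehat{F} = F + \sum_{\nu=1}^K G_\nu.
\end{equation*}

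Three things must be checked. First, $\widehat F|_E = f$: each $G_\nu$ is supported away from $E$, and $F|_E = f$. Second, $\widehat F$ satisfies the CPP. By the disjointness of supports, $J_{x_\nu} G_{\nu'} = 0$ for $\nu' \neq \nu$, so $J_{x_\nu} \widehat F = J_{x_\nu} F + M_\nu = L_\nu$. For the keystone $Q^\#_{\mu(\nu)} = Q_{\nu^*}$ with $\nu^* = \nu(\mu(\nu))$, the normalization $\mu(\nu^*) = \mu(\nu)$ gives $M_{\nu^*} = 0$, so $J_{x^\#_{\mu(\nu)}} \widehat F = J_{x_{\nu^*}} F = L_\nu$ as well. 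Third, the norm bound: since $M_\nu$ is affine its Hessian vanishes, and on $B(x_\nu,\delta_\nu/100)$ we have $|M_\nu(x)| \lesssim |M_\nu(x_\nu)| + \delta_\nu |\nabla M_\nu|$, so
\begin{equation*}
|\nabla^2 G_\nu(x)| \lesssim \delta_\nu^{-2}|M_\nu(x_\nu)| + \delta_\nu^{-1}|\nabla M_\nu|.
\end{equation*}
Raising to the $p$th power and integrating over the ball of area $\lesssim \delta_\nu^2$ yields
\begin{equation*}
\|G_\nu\|_{L^{2,p}(\R^2)}^p \lesssim |F(x_\nu) - J_{x^\#_{\mu(\nu)}}F(x_\nu)|^p \delta_\nu^{2-2p} + |\nabla F(x_\nu) - \nabla F(x^\#_{\mu(\nu)})|^p \delta_\nu^{2-p}.
\end{equation*}
Using disjointness of supports and the two inequalities of Lemma \ref{globsobineq}, summing gives $\sum_\nu \|G_\nu\|_{L^{2,p}}^p \lesssim \|F\|_{L^{2,p}(\R^2)}^p$, hence $\|\widehat F\|_{L^{2,p}(\R^2)}^p \lesssim \|F\|_{L^{2,p}(\R^2)}^p \lesssim \|f\|_{L^{2,p}(\R^2)|_E}^p$.

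The only genuinely non-trivial input is the Global Sobolev Inequality; everything else reduces to calibrating the local cutoffs so that (a) their supports avoid $E$ and each other, and (b) the natural scaling $\delta_\nu^{-2}$ for second derivatives marries exactly with the weights $\delta_\nu^{2-2p}$, $\delta_\nu^{2-p}$ appearing on the right of Lemma \ref{globsobineq}. The one subtlety worth flagging is the consistency choice $\mu(\nu(\mu)) = \mu$, which forces the corrections to vanish at keystone points and makes CPP automatic; if this were not imposed, the corrections at keystone squares would themselves need further adjustment and the bookkeeping would become circular.
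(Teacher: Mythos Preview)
Your proof is correct and follows essentially the same route as the paper: start from a near-optimal interpolant, add bump corrections $\theta_\nu(L_\nu - J_{x_\nu}F)$ supported in small balls around the $x_\nu$, and control the total cost via Lemma~\ref{globsobineq}. Your explicit normalization $\mu(\nu(\mu)) = \mu$ for keystone squares is a point the paper leaves implicit (it follows from the trivial path in Case~1 of the proof of \textbf{(K1)}), and flagging it is appropriate since without it the verification of CPP at keystone points would indeed be circular.
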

\begin{rek}
Equivalently, the above lemma claims the existence of $\widehat{F} \in \WR$ with $\widehat{F}|_E=f$, $\|\widehat{F}\|_{\WR} \lesssim \|f\|_{\WR|_E}$, and $J_{E'}\widehat{F}=L$ for $L$ the constant-path extension of $L^\#=J_{E^\#} \widehat{F}$.
\end{rek}

\begin{proof}
From the definition of the trace semi-norm, we may fix $F \in \WR$ with: $(1)$ $F|_E = f$, and $(2)$ $\|F\|_{\WR} \leq 2 \|f\|_{\WR|_E}$. Our goal will be to modify $F$ near $E'$ so as to ensure it satisfies the CPP without disturbing the values of $F$ on $E$. Let $L^\# = J_{E^\#}F$, with $L \in Wh(E')$ the constant-path extension of $L^\#$.

For $1 \leq \nu \leq K$, we choose $\theta_\nu \in C_c^{\infty}(B(x_\nu,\frac{1}{20}\delta_\nu))$ which satisfies: $(1)$ $0 \leq \theta_\nu \leq 1$, $(2)$ $\theta_\nu \equiv 1$  on $B(x_\nu, \frac{1}{40}\delta_\nu)$, and $(3)$ $|\partial^{\alpha} \theta_\nu| \leq \delta_\nu^{-|\alpha|}$ for each $|\alpha| \leq 2$.

We recall that $x_\nu \in \frac{1}{2}Q_\nu$ and $d(x_\nu,E) \geq \frac{1}{10} \delta_Q$. Along with $\mbox{supp}(\theta_\nu) \subset B(x_\nu,\frac{1}{20}\delta_\nu) \subset Q_\nu$, this implies
\begin{align*}
&\nu \neq \nu' \Rightarrow \mbox{supp}(\theta_\nu) \cap \mbox{supp}(\theta_{\nu'}) = \emptyset;
&\theta_\nu|_E = 0 \; \mbox{for all} \; \nu \in \{1,2,\cdots,K\}.
\end{align*}

Define $h_\nu = \theta_\nu (L_\nu - J_{x_\nu}F)$, which satisfies: $(1)$ $h_{\nu}|_E = 0$, $(2)$ $J_{x_\nu} h_\nu = L_\nu - J_{x_\nu}F$, $(3)$ $\mbox{supp}(h_\nu) \subset Q_\nu$, and $(4)$ $\|h_\nu\|^p_{L^{2,p}(Q_\nu)} \lesssim |L_\nu(x_\nu) - J_{x_\nu}F(x_\nu)|^p \delta^{2-2p}_\nu + |\nabla L_\nu - \nabla F(x_\nu)|^p \delta^{2-p}_\nu$.
 
Finally, define $\widehat{F} = F + \sum_{\nu} h_\nu$. Since the supports of the functions $h_\nu$ ($1 \leq \nu \leq K$) are disjoint and contained within $Q_\nu$, and by Lemma \ref{globsobineq} as well as $P2$ of $F$,
\begin{align*}
\|\widehat{F}\|^p_{\WR} & \lesssim \|F\|^p_{\WR} + \|\sum_{\nu} h_\nu\|^p_{L^{2,p}(Q_\nu)} = \|F\|^p_{\WR} + \sum_{\nu} \|h_\nu\|^p_{L^{2,p}(Q_\nu)} \\ 
& \lesssim \|F\|^p_{\WR} + \sum_{\nu} \left[ |J_{x^\#_{\mu(\nu)}}F(x_\nu)  - J_{x_\nu}F(x_\nu)|^p \delta^{2-2p}_\nu + |\nabla F(x^\#_{\mu(\nu)}) - \nabla F(x_\nu)|^p \delta^{2-p}_\nu \right] \\
& \lesssim \|F\|^p_{\WR} \leq 2^p \|f\|_{\WR|_E}.
\end{align*}
Using the fact that $x_\nu \in \mbox{supp}(h_{\nu'})$ iff $\nu'=\nu$, and $P1$-$3$ of $h_\nu$ as well as $P1$ of $F$,
\begin{align*}
&J_{x_\nu} \widehat{F} = J_{x_\nu} F + J_{x_\nu} h_\nu = L_\nu; \\
&\widehat{F}|_E = F|_E + \sum_{\nu} h_\nu|_E = F|_E + 0 = f.
\end{align*}
Since $L_\nu$ is the constant-path extension of $L^\#$, $\widehat{F}$ satisfies the CPP.
\end{proof}

Since the additional assumption that our interpolant of $f$ satisfies the CPP does not make its optimal norm any worse, we might as well assume it from here on out. This leads us to consider the following extension problem, which has become natural in light of the previous result.

\textbf{(Modified Extension Problem)}
Given a finite $E \subset \R^2$, $f: E \rightarrow \R$, and $L^\# \in Wh(E^\#)$. Set $L \in Wh(E')$ to be the constant-path extension of $L^\#$. We say that $\widehat{F} \in \WR$, and $\widehat{M}(f,L^\#) \in [0,\infty)$ solve the \textbf{(MEP)} with data $(f,L^\#) \in \WR|_E \times Wh(E^\#)$ if and only if
\begin{description}
\item[(MEPa)] $\widehat{F}|_E = f$;
\item[(MEPb)] $J_{E'} \widehat{F} = L$;
\item[(MEPc)] $\|\widehat{F}\|_{\WR}$ is C-optimal among $\WR$-functions satisfying the two properties above.
\item[(MEPd)] $\widehat{M}(f,L^\#) \approx \|\widehat{F}\|_{\WR}$
\end{description}

From \textbf{(MEPc)} and \textbf{(MEPd)}, we find that $\widehat{M}(f,L^\#) \approx \inf\{\|F\|_{\WR} : F|_E=f, J_{E'} F = L\}$ (as usual, $L$ is the constant-path extension of $L^\#$). The \textbf{(MEP)} interests us because of the following lemma.

\begin{lem}
Let $E \subset \R^2$ be finite, and $f:E \rightarrow \R$ be given. Then $$\|f\|_{\WR|_E} \approx \inf \{ \widehat{M}(f, L^\#) : L^\# \in Wh(E^\#) \}.$$ 
\label{minilem}
\end{lem}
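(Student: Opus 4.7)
The plan is to prove both inequalities separately, and both will follow almost immediately from unpacking the definition of the \textbf{(MEP)} together with Lemma \ref{modlem}, which has already done the real work.

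For the easy inequality $\|f\|_{\WR|_E} \lesssim \inf_{L^\#} \widehat{M}(f,L^\#)$, I would fix any $L^\# \in Wh(E^\#)$ with constant-path extension $L \in Wh(E')$, and observe that by \textbf{(MEPc)}--\textbf{(MEPd)},
\begin{equation*}
\widehat{M}(f,L^\#) \approx \inf\{\|F\|_{\WR} : F|_E = f, \; J_{E'} F = L\} \geq \inf\{\|F\|_{\WR} : F|_E = f\} = \|f\|_{\WR|_E},
\end{equation*}
since dropping the jet constraint enlarges the admissible set of competitors. Taking the infimum over $L^\#$ yields the desired inequality up to a universal constant.

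For the reverse inequality $\inf_{L^\#} \widehat{M}(f,L^\#) \lesssim \|f\|_{\WR|_E}$, I would apply Lemma \ref{modlem} to produce $\widehat{F} \in \WR$ with $\widehat{F}|_E = f$, $\|\widehat{F}\|_{\WR} \lesssim \|f\|_{\WR|_E}$, and $\widehat{F}$ satisfying the constant-path property. Setting $L^\# := J_{E^\#} \widehat{F}$, the CPP implies that $J_{E'} \widehat{F}$ coincides with the constant-path extension $L$ of this $L^\#$. Hence $\widehat{F}$ is an admissible competitor for the infimum defining $\widehat{M}(f,L^\#)$, so
\begin{equation*}
\widehat{M}(f,L^\#) \approx \inf\{\|F\|_{\WR} : F|_E=f, \; J_{E'}F = L\} \leq \|\widehat{F}\|_{\WR} \lesssim \|f\|_{\WR|_E}.
\end{equation*}
Taking the infimum on the left over $L^\#$ gives the bound. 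There is no real obstacle here: Lemma \ref{modlem} is doing all the substantive work by guaranteeing that imposing the CPP only increases the trace semi-norm by a universal constant; the present lemma is essentially a reformulation of that fact in the language of the \textbf{(MEP)}.
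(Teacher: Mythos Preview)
Your proposal is correct and follows essentially the same approach as the paper: the easy direction is just dropping the extra jet constraint in the \textbf{(MEP)}, and the hard direction comes directly from Lemma \ref{modlem} by taking $L^\# = J_{E^\#}\widehat{F}$ for the CPP extension $\widehat{F}$. The paper's proof says exactly this in two sentences; you have simply written out the details.
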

\begin{proof}
The ``$\lesssim$'' direction is trivial because of the extra conditions present in the $\textbf{(MEP)}$. The ``$\gtrsim$'' direction is a consequence of Lemma \ref{modlem}, which states that there exist $C$-optimal interpolants of $f$ which satisfy the CPP.
\end{proof}

Lemma \ref{minilem} suggests a first step towards constructing C-optimal interpolants of $f$: For any $L^\# \in Wh(E^\#)$, in Section \ref{patch}, we construct a global solution to the \textbf{(MEP)} with data $(f,L^\#)$ with an approximate formula for its $\WR$ norm given by $\widehat{M}(f,L^\#)$. Later, in Section \ref{findwhitfield}, we produce a Whitney field $\tilde{L}^\# \in Wh(E^\#)$ depending linearly on $f$ which is an essential infimum for $\widehat{M}(f,\cdot)$ in the sense that $\widehat{M}(f,\tilde{L}^\#) \lesssim \widehat{M}(f,L^\#)$ for all $L^\# \in Wh(E^\#)$. Finally, the solution to the Sobolev interpolation problem for $f$ is taken to be the solution of the \textbf{(MEP)} with data $(f,\tilde{L}^\#)$.

\section{A Local Modified Extension Problem}
\label{LMEP}

We solve the \textbf{(MEP)} by first solving a collection of local extension problems. The local extension problem will be one with function values specified on the set $\tilde{Q}_\nu \cap E$, and a jet specified at $x_\nu$. The geometry of the set for each local piece is quite simple, which makes these local extension problems easy to solve. We now detail the geometric properties of each local piece that we require for the arguments of this section to follow through. Let $c_4>0$ be a sufficiently small universal constant (independent of $c_1$, $c_2$, and $c_3$). The following assumptions are in effect for the remainder of this section.

\noindent\textbf{Geometric Assumptions:}
\begin{itemize}
\item $Q$ is an arbitrary square in $\R^2$, and $E_0 \subset 0.9Q$ is finite with $\|E_0\|_{\bes} \leq c_4 \delta_Q^{2/p-1}$.
\item $x_0 \in \frac{1}{2}Q$ satisfies $d(x_0,E_0) \geq \frac{1}{100} \delta_{Q}$.
\end{itemize}

\begin{thm}[Local Modified Extension Operator]
There exists a sufficiently small universal constant $c_4>0$ such that the following holds: For any $Q$, $E_0$, and $x_0$ that satisfy the Geometric Assumptions (with constant $c_4>0$), there exists a linear operator $\widehat{T}_0: L^{2,p}(Q)|_{E_0} \times P \rightarrow L^{2,p}(Q)$ and a non-negative real number $\widehat{M}_Q(\cdot,\cdot)$ which satisfy
\begin{enumerate}
\item $\widehat{T}_0(f_0,L_0)|_{E_0} = f_0$;
\item $J_{x_0}\widehat{T}_0(f_0,L_0) = L_0$;
\item $\|\widehat{T}_0(f_0,L_0)\|_{L^{2,p}(Q)} \approx \widehat{M}_Q(f_0,L_0)$ is C-optimal with respect to the previous two properties.
\item There exist linear functionals $\{\lambda_i\}_{i=1}^{N_0}$ with $N_0 \lesssim (\#E_0)^2$ and $\widehat{M}_Q(f_0,L_0)^p = \sum_{i=1}^{N_0} |\lambda_i(f_0,L_0)|^p$.
\end{enumerate}

\label{localthm}
\end{thm}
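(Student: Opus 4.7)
The plan is to reduce the local extension problem to a one-dimensional Besov extension problem via the Straightening Lemma, then correct the resulting jet at $x_0$ by a compactly-supported bump. First, by rescaling, assume $Q = [-1/2,1/2]^2$. Since $L_0$ is affine (so has zero $L^{2,p}(Q)$ seminorm), I replace the data $(f_0,L_0)$ by $(\tilde{f}_0,0)$ with $\tilde{f}_0 = f_0 - L_0|_{E_0}$, and seek $\tilde{F}$ with $\tilde{F}|_{E_0} = \tilde{f}_0$ and $J_{x_0}\tilde{F} = 0$; at the end set $\widehat{T}_0(f_0,L_0) = \tilde{F} + L_0$. By the definition of $\|E_0\|_{\bes}$ I may choose a curve $\gamma \subset 0.9Q$ containing $E_0$ with $\|\gamma\|_{\bes} \leq 2c_4$, and then invoke Lemma \ref{strlem} (taking $c_4$ sufficiently small) to produce Euclidean coordinates $(u,v)$ and a diffeomorphism $\Phi:\R^2 \to \R^2$ with $\Phi(\gamma) \subset \{v=0\}$, $\|\Phi^{\pm 1}\|_{L^{2,p}(\R^2)} \lesssim c_4$, and $|\nabla \Phi|, |\nabla \Phi^{-1}| \leq 10$.

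Now I construct $\tilde{F} = F_1 + F_2$. Let $\tilde{E}_0 = \Phi(E_0)$, naturally identified with a finite subset of $\R$ via the $u$-coordinate, and define $g:\tilde{E}_0 \to \R$ by $g(u) = \tilde{f}_0(\Phi^{-1}(u,0))$; the set $\tilde{E}_0$ has diameter $\lesssim 1$ because $\Phi$ is bi-Lipschitz. Applying Proposition \ref{lbe} yields $\bar{g} \in \besfull(\R)$ interpolating $g$ on $\tilde{E}_0$ with $\|\bar{g}\|^p_{\besfull(\R)} \approx \|g\|^p_{\besfull(\R)|_{\tilde{E}_0}} \approx \sum_{i=1}^{N_0} |\lambda_i(g)|^p$ for some $N_0 \lesssim (\#E_0)^2$. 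Applying $T_1$ from Proposition \ref{trthm} yields $G_0 \in W^{2,p}(\R^2)$ with $G_0(u,0) = \bar{g}(u)$ and $\|G_0\|_{W^{2,p}(\R^2)} \lesssim \|\bar{g}\|_{\besfull(\R)}$. Define $F_1 = G_0 \circ \Phi$; the straightening gives $F_1|_{E_0} = \tilde{f}_0$, and Lemma \ref{sobdiff} gives $\|F_1\|_{W^{2,p}(\R^2)} \lesssim \|G_0\|_{W^{2,p}(\R^2)}$, whence the SET controls $|F_1(x_0)| + |\nabla F_1(x_0)|$ by the same quantity. Set $P^\ast = -J_{x_0}F_1 \in P$, fix $\theta \in C^\infty_c(B(x_0,1/400))$ equal to $1$ on $B(x_0,1/800)$ (so $\theta(x_0)=1$, $\nabla \theta(x_0) = 0$, and $\theta \equiv 0$ on $E_0$ because $d(x_0,E_0) \geq 1/100$), and let $F_2 = \theta \cdot P^\ast$. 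A direct computation shows $J_{x_0}F_2 = P^\ast$, $F_2|_{E_0} = 0$, and $\|F_2\|_{L^{2,p}(Q)} \lesssim |P^\ast(x_0)| + |\nabla P^\ast| \lesssim \|\bar{g}\|_{\besfull(\R)}$. Thus $\tilde{F}$ satisfies all the required interpolation conditions with $\|\tilde{F}\|_{L^{2,p}(Q)} \lesssim \left(\sum_i |\lambda_i(g)|^p\right)^{1/p}$.

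Linearity of $\widehat{T}_0$ is immediate since each ingredient ($T_b$, $T_1$, pre-composition with $\Phi$, extraction of $J_{x_0}$, multiplication by $\theta$) is linear in its input and $g$ depends linearly on $(f_0,L_0)$; setting $\widehat{M}_Q(f_0,L_0)^p = \sum_i |\lambda_i(g)|^p$ supplies the requested formula with $N_0 \lesssim (\#E_0)^2$. The nontrivial step is the matching lower bound: given any admissible $F'$ with $F'|_{E_0} = f_0$ and $J_{x_0}F' = L_0$, I must show $\widehat{M}_Q(f_0,L_0) \lesssim \|F'\|_{L^{2,p}(Q)}$. Setting $\tilde{F}' = F' - L_0$, the hypothesis $J_{x_0}\tilde{F}' = 0$ together with the SET gives uniform $L^\infty$ bounds $\|\tilde{F}'\|_{L^\infty(Q)}, \|\nabla \tilde{F}'\|_{L^\infty(Q)} \lesssim \|F'\|_{L^{2,p}(Q)}$, upgrading the homogeneous $L^{2,p}$ control to a full $W^{2,p}(Q)$ bound. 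Extending $\tilde{F}'$ to $\R^2$ by a standard bounded-domain Sobolev extension, transporting via $\Phi^{-1}$ using Lemma \ref{sobdiff}, and reading off the trace on $\{v=0\}$ by the trace half of Proposition \ref{trthm} produces a $\besfull(\R)$ interpolant of $g$ with norm $\lesssim \|F'\|_{L^{2,p}(Q)}$, forcing $\|g\|_{\besfull(\R)|_{\tilde{E}_0}} \approx \widehat{M}_Q(f_0,L_0) \lesssim \|F'\|_{L^{2,p}(Q)}$. The main obstacle is the bookkeeping between the homogeneous $L^{2,p}$ seminorm and the inhomogeneous $\besfull$ norm: the passage back and forth is legitimate precisely because the jet condition $J_{x_0}\tilde{F} = 0$ pins down the otherwise-free affine component of $\tilde{F}$, letting the SET upgrade the $L^{2,p}(Q)$ seminorm to a full $W^{2,p}(Q)$ norm.
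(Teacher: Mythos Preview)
Your proof is correct and follows essentially the same route as the paper: straighten $E_0$ via Lemma \ref{strlem}, solve the one-dimensional Besov problem via Proposition \ref{lbe}, lift back with Proposition \ref{trthm} and Lemma \ref{sobdiff}, and correct the jet at $x_0$ with a bump supported away from $E_0$. Your choice to subtract $L_0$ at the outset is a clean way to handle the jet constraint, and your lower-bound argument (SET upgrade from $L^{2,p}(Q)$ to $W^{2,p}(Q)$, Sobolev extension to $\R^2$, transport by $\Phi^{-1}$, trace) matches the paper's chain of norm equivalences; the only cosmetic wrinkle is that you need not pass through an auxiliary curve $\gamma$---the Straightening Lemma applies directly to the finite set $E_0$, since $E_0 \subset 0.9Q$ and $\|E_0\|_{\bes} \leq c_4$.
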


\begin{rek} It is simple to check that $\widehat{M}_Q(\cdot,\cdot)$ is $C$-equivalent to a semi-norm on $L^{2,p}(Q)|_{E_0} \times P$, and is therefore essentially subadditive in the sense that $\widehat{M}_Q(f_0+f_1,L_0+L_1) \lesssim \widehat{M}_Q(f_0,L_0) + \widehat{M}_Q(f_1,L_1)$.
\end{rek}

\begin{proof}

Through a standard rescaling argument, without loss of generality $Q=[-1/2,1/2]^2$. Since $E_0 \subset 0.9Q$, a standard cutoff function argument yields
\begin{align}
\label{locglob}
\|f_0\|_{L^{2,p}(Q)|_{E_0}} &\approx \|f_0\|_{L^{2,p}(\R^2)|_{E_0}}; \\
\|f_0\|_{W^{2,p}(Q)|_{E_0}} &\approx \|f_0\|_{W^{2,p}(\R^2)|_{E_0}}. \notag{}
\end{align}

Since $E_0 \subset 0.9Q$, and $\|E_0\|_{\bes} \leq c$ with $c$ sufficiently small, Lemma \ref{strlem} gives us a diffeomorphism $\Phi:\R^2 \rightarrow \R^2$, and Euclidean coordinates $(u,v)$ on $\R^2$, with
\begin{enumerate}
\item $\Phi(u,v) = (u,0)$ for all $(u,v) \in E_0$;
\item $\|\Phi\|_{L^{2,p}(\R^2)} \lesssim c$;
\item $|\nabla \Phi(x)|,$ $|(\nabla \Phi(x))^{-1}| \lesssim 1$ for all $x \in \R^2$.
\end{enumerate}
Let $E_1 = \Phi(E_0) \subset \R$, and $f_1 = f_0 \circ \Phi^{-1}|_{E_1}$. The following equivalence of norms follows from Lemma \ref{sobdiff}.
\begin{align}
\label{sobdiff2}
\|f_0\|_{W^{2,p}(\R^2)|_{E_0}} & = \inf\{\|F\|_{W^{2,p}(\R^2)} : F|_{E_0} = f_0 \} \\
& \approx \inf \{ \|F \circ \Phi^{-1}\|_{W^{2,p}(\R^2)} : F|_{E_1} = f_1\} = \|f_1\|_{W^{2,p}(\R^2)|_{E_1}}. \notag{}
\end{align}  
Also, from Proposition \ref{trthm} we have
\begin{align}
\label{treq}
\|f_1\|_{W^{2,p}(\R^2)|_{E_1}} &= \inf \{\|F\|_{W^{2,p}(\R^2)} : F|_{E_1} = f_1\} \\
& \approx \inf \{\|g\|_{\besfull(\R)} : g|_{E_1} = f_1\} = \|f_1\|_{\besfull(\R)|_{E_1}}. \notag{}
\end{align}
Applying Proposition \ref{lbe} to $f_1:E_1 \subset \R \rightarrow \R$ we find $g \in \besfull(\R)$ with: $(1)$ $g|_{E_1} = f_1$, $(2)$ $\|g\|_{\besfull(\R)} \lesssim \|f_1\|_{\besfull(\R)|_{E_1}}$, and $(3)$ $g$ depends linearly on $f_1$. Proposition \ref{trthm} then applies to produce $F_1 \in W^{2,p}(\R^2)$ with: $(1)$ $F_1|_{\R} = g$, $(2)$ $\|F_1\|_{W^{2,p}(\R^2)} \lesssim \|g\|_{\besfull(\R)}$, and $(3)$ $F_1$ depends linearly on $g$. From the linear dependence of $g$ on $f_1$, $F_1$ also depends linearly on $f_1$. From $P1$ of $g$ and $P1$ of $F_1$ it follows that $F_1|_{E_1} = g|_{E_1} = f_1$. From  $P2$ of $g$, $P2$ of $F_1$, and \eqref{treq},
\begin{equation}
\|F_1\|_{W^{2,p}(\R^2)} \lesssim \|g\|_{\besfull(\R)} \lesssim \|f_1\|_{\besfull(\R)|_{E_1}} \approx \|f_1\|_{W^{2,p}(\R^2)|_{E_1}}.
\label{nbo1}
\end{equation}
Let $F_2 := F_1 \circ \Phi$, for which: $(1)$ $F_2|_{E_0} = F_1 \circ \Phi |_{E_0} = f_1 \circ \Phi|_{E_0} = f_0$.
Lemma \ref{sobdiff} implies that $\|F_2\|_{W^{2,p}(\R^2)} \approx \|F_1\|_{W^{2,p}(\R^2)}$. Similarly, \eqref{sobdiff2} implies that $\|f_1\|_{W^{2,p}(\R^2)|_{E_1}} \approx \|f_0\|_{W^{2,p}(\R^2)|_{E_0}}$. Therefore, from \eqref{nbo1}: $(2)$ $\|F_2\|_{W^{2,p}(\R^2)} \lesssim \|f_0\|_{W^{2,p}(\R^2)|_{E_0}}$.

Choose $\theta \in C_c^{\infty}(B(x_0,\frac{1}{150}))$, with: $(1)$ $\theta \equiv 1$ on $B(x_0,\frac{1}{200})$, and $(2)$ $|\partial^{\alpha} \theta| \lesssim 1$ for $|\alpha| \leq 2$. Since $d(x_0,E_0) \geq \frac{1}{100}$, we have $\theta|_{E_0} = 0$. Define $F_3 = (F_2 - \theta J_{x_0}F_2)|_{Q}$. We show that
\begin{enumerate}
\item $F_3|_{E_0} = F_2|_{E_0} - 0 = f_0$;
\item $J_{x_0} F_3 = J_{x_0}F_2 - J_{x_0} F_2 = 0$;
\item $\|F_3\|_{L^{2,p}(Q)}$ is C-optimal with respect to the two properties above, with $\|F_3\|_{L^{2,p}(Q)} \approx \|f_0\|_{W^{2,p}(Q)|_{E_0}}$.
\end{enumerate}
Indeed, $P1$ and $P2$ follow immediately as above. From the SET, $P2$ of $F_2$, and \eqref{locglob},
\begin{equation*}
\|F_3\|_{W^{2,p}(Q)} = \|F_2 - \theta J_{x_0} F_2\|_{W^{2,p}(Q)} \lesssim \|F_2\|_{W^{2,p}(\R^2)} \lesssim \|f_0\|_{W^{2,p}(\R^2)|_{E_0}} \approx \|f_0\|_{W^{2,p}(Q)|_{E_0}}
\end{equation*}
Since $F_3$ interpolates $f_0$, $\|F_3\|_{W^{2,p}(Q)}$ is $C$-optimal with respect to $P1$-$2$ above, and moreover $\|F_3\|_{W^{2,p}(Q)} \approx \|f_0\|_{W^{2,p}(Q)|_{E_0}}$. For the moment, consider an arbitrary $F$ with $F|_{E_0} = f_0$, and $J_{x_0}F = 0$. The SET implies that $\|F\|_{W^{2,p}(Q)} \lesssim \|F\|_{L^{2,p}(Q)}$, and so the $W^{2,p}(Q)$ and $L^{2,p}(Q)$ norms are $C$-equivalent for such an $F$. This argument implies that $\|F_3\|_{L^{2,p}(Q)}$ is also C-optimal with respect to $P1$-$2$ of $F_3$, and that $\|F_3\|_{L^{2,p}(Q)} \approx \|F_3\|_{W^{2,p}(Q)} \approx \|f_0\|_{W^{2,p}(Q)|_{E_0}}$. This establishes $P3$ of $F_3$.

We finally set $F_4 = F_3 + L_0$, for which: $(1)$ $F_4|_{E_0} = f_0 + L_0|_{E_0} = f_0$, and $(2)$ $J_{x_0} F_4 = 0 + L_0 = L_0$. From $P3$ of $F_3$, and the fact that the Sobolev semi-norm is invariant under addition of affine functions, $\|F_4\|_{L^{2,p}(Q)}$ is C-optimal with respect to $(1)$ and $(2)$. One sees from the above construction that $F_4$ depends linearly on $f_0$ and $L_0$, and so $\widehat{T}_0(f_0,L_0) = F_4$ is the desired linear extension operator. It only remains to find an approximate formula for $\|F_4\|_{L^{2,p}(Q)}$ that is given by a sum of linear functionals raised to the $p$'th power. Through inequalities derived above,
\begin{align*}
\|F_4\|^p_{L^{2,p}(Q)} = \|F_3\|^p_{L^{2,p}(Q)} & \approx \|f_0\|_{W^{2,p}(Q)|_{E_0}} \approx \|f_0\|_{W^{2,p}(\R^2)|_{E_0}} \\
& \approx \|f_1\|^p_{W^{2,p}(\R^2)|_{E_1}} \approx \|f_1\|^p_{\besfull(\R)|_{E_1}} \approx \sum_{i=1}^{N_0} |\lambda_i(f_0,L_0)|^p,
\end{align*}
with $N_0 \lesssim (\#E_0)^2$. The last $\approx$ comes from the formulation of the Besov trace norm in Proposition \ref{lbe}. This completes the proof of Theorem \ref{localthm}.
\end{proof}

\section{Patching the local solutions}
\label{patch}
In this section, along with the already fixed $E \subset \frac{1}{10} Q^\circ$ and $f:E \rightarrow \R$, we also fix $L^\# \in Wh(E^\#)$. As usual, we let $L \in Wh(E')$ be the constant-path extension of $L^\#$. Let $E_\nu = 1.1 Q_\nu \cap E$, and $f_\nu = f|_{E_\nu}$. From the OK property of $Q_\nu$ we have $\|E_\nu\|_{\bes} \leq c_1 \delta^{2/p-1}_\nu$, and obviously $E_\nu \subset 1.1Q_\nu \subset 0.9 \tilde{Q}_\nu$.

Since we are free to choose $c_1$ smaller than $c_4$, we can arrange that $\|E_\nu\|_{\bes} \leq c_4 \delta^{2/p-1}_\nu$ for all $1\leq \nu \leq K$. Since $d(x_\nu,E_\nu) \geq d(x_\nu,E) \geq \frac{1}{10}\delta_\nu$, we find that $x_\nu$, $E_\nu$, and $\tilde{Q}_\nu$ satisfy the Geometric Assumptions from Section \ref{LMEP} for each $1 \leq \nu \leq K$. Thus, by Theorem \ref{localthm}, there exists $\widehat{T}_\nu:L^{2,p}(\tilde{Q}_\nu)|_{E_\nu} \times P \rightarrow L^{2,p}(\tilde{Q}_\nu)$ with 
\begin{enumerate}
\item $\widehat{T}_\nu(f_\nu,L_\nu)|_{E_\nu} = f_\nu$;
\item $J_{x_\nu} \widehat{T}_\nu(f_\nu,L_\nu) = L_\nu$;
\item $\|\widehat{T}_\nu(f_\nu,L_\nu)\|_{L^{2,p}(\tilde{Q}_\nu)}$ is C-optimal with respect to previous two properties.
\end{enumerate} 
Moreover, there exists a non-negative real number of the form $\widehat{M}_\nu (f_\nu,L_\nu)^p =  \sum^{N_\nu}_{i=1} |\lambda^\nu_i(f_\nu,L_\nu)|^p$ with $N_\nu \lesssim (\#E_\nu)^2$ and $\widehat{M}_\nu(f_\nu,L_\nu) \approx \|\widehat{T}_\nu(f_\nu,L_\nu)\|_{L^{2,p}(\tilde{Q}_\nu)}$. The solution to the \textbf{(MEP)} through a linear extension operator follows.

\begin{prop}
Let $E \subset \frac{1}{10}Q^\circ$ be finite. There exists a linear operator $\widehat{T}:\WR|_E \times Wh(E^\#) \rightarrow \WR$, and a non-negative real number $\widehat{M}(\cdot,\cdot)$ so that the following holds: For any $f:E \rightarrow \R$, and $L^\# \in Wh(E^\#)$, we have
\begin{enumerate}
\item $\widehat{T}(f,L^\#)|_{E} = f$;               
\item $J_{E'}\widehat{T}(f,L^\#) = L$;
\item $\|\widehat{T}(f,L^\#)\|_{\WR} \approx \widehat{M}(f,L^\#)$ is C-optimal with respect to the two properties above.
\end{enumerate}
Moreover, we may take
\begin{equation*}
\widehat{M}(f,L^\#)^p = \sum_{\nu} \widehat{M}_\nu(f_\nu, L_\nu)^p + \sum_{\nu \leftrightarrow \nu'} \left[ |\nabla L_\nu -\nabla L_{\nu'}|^p \delta^{2-p}_\nu + |L_\nu(x_\nu) - L_{\nu'}(x_\nu)|^p \delta^{2-2p}_\nu \right].
\end{equation*}
\label{mainprop}
\end{prop}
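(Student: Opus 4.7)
The plan is to patch the local interpolants $F_\nu := \widehat{T}_\nu(f_\nu, L_\nu)$ supplied by Theorem \ref{localthm} via a partition of unity adapted to the CZ decomposition. I would construct $\{\theta_\nu\}_{\nu=1}^K$ with $\theta_\nu \in C^\infty_c(1.1 Q_\nu)$, $|\partial^\alpha \theta_\nu| \lesssim \delta_\nu^{-|\alpha|}$ for $|\alpha|\leq 2$, $\sum_\nu \theta_\nu \equiv 1$ on $Q^\circ$, and, crucially, $\theta_\nu \equiv 1$ on $B(x_\nu, c\delta_\nu)$ for some universal $c>0$. The last condition is compatible with the rest because $x_\nu \in \tfrac{1}{2}Q_\nu$ together with $\delta_{\nu'}\leq 2\delta_\nu$ for $\nu'\leftrightarrow \nu$ (Lemma \ref{goodgeom}) force $d(x_\nu, 1.1Q_{\nu'})\gtrsim \delta_\nu$ for $\nu'\neq\nu$. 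Define $F := \sum_\nu \theta_\nu F_\nu$, interpreting $\theta_\nu F_\nu$ as supported in $1.1 Q_\nu \subset 0.9\tilde{Q}_\nu$; outside $Q^\circ$ a global cutoff at scale $\delta_{Q^\circ}$ yields a function on $\R^2$ that agrees with $F$ on $0.99 Q^\circ \supset E \cup E'$ (the inclusion for $E'$ is Lemma \ref{keyptlem}), and costs nothing because any CZ square meeting $\partial Q^\circ$ has sidelength $\gtrsim \delta_{Q^\circ}$ by the proof of that lemma. The map $\widehat{T}(f,L^\#) := F$ is manifestly linear.

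Properties (1) and (2) are immediate from the POU. For $x\in E$, every $\nu$ with $\theta_\nu(x)>0$ has $x \in 1.1Q_\nu \cap E = E_\nu$, so $F_\nu(x)=f(x)$ by $P1$ of $\widehat{T}_\nu$, yielding $F(x) = f(x)\sum_\nu \theta_\nu(x) = f(x)$. At $x_\nu$ the sum collapses to $F \equiv F_\nu$ on $B(x_\nu,c\delta_\nu)$, so $J_{x_\nu}F = J_{x_\nu}F_\nu = L_\nu$ by $P2$ of $\widehat{T}_\nu$, recovering $J_{E'}F = L$.

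For the upper bound in property (3), on each $Q_\mu$ I rewrite $F$ using $\sum_\nu \theta_\nu \equiv 1$ as
\begin{equation*}
F = L_\mu + \sum_{\nu \leftrightarrow \mu} \theta_\nu (F_\nu - L_\mu),
\end{equation*}
where the support considerations above restrict the sum to neighbors $\nu\leftrightarrow\mu$. Since $L_\mu$ is affine, Leibniz gives
\begin{equation*}
|\nabla^2 F(x)| \lesssim \sum_{\nu \leftrightarrow \mu} \Bigl[ |\nabla^2 F_\nu(x)| + \delta_\mu^{-1}|\nabla F_\nu(x) - \nabla L_\mu(x)| + \delta_\mu^{-2}|F_\nu(x) - L_\mu(x)| \Bigr].
\end{equation*}
Applying the SET to $F_\nu$ on $\tilde{Q}_\nu$ (with $J_{x_\nu}F_\nu = L_\nu$) and then splitting $L_\nu - L_\mu$ by the triangle inequality replaces the inner terms by contributions of sizes $\|F_\nu\|_{L^{2,p}(\tilde{Q}_\nu)}$, $|\nabla L_\nu - \nabla L_\mu|$, and $|L_\nu(x_\mu) - L_\mu(x_\mu)|$ (the last arising when one compares the affine functions at $x \in Q_\mu$ relative to the base point $x_\mu$). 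Raising to the $p$th power, integrating over $Q_\mu$ (area $\approx \delta_\mu^2 \approx \delta_\nu^2$), summing over $\mu$, and invoking the bounded intersection property of $\{\tilde{Q}_\nu\}$ produces exactly the right-hand side of the stated formula for $\widehat{M}(f,L^\#)^p$.

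For the lower bound, let $G \in \WR$ satisfy $G|_E = f$ and $J_{E'}G = L$. Then $G|_{\tilde{Q}_\nu}$ is admissible for the local problem with data $(f_\nu, L_\nu)$, so $P3$ of Theorem \ref{localthm} gives $\widehat{M}_\nu(f_\nu, L_\nu) \lesssim \|G\|_{L^{2,p}(\tilde{Q}_\nu)}$. The crossing terms unwind by rewriting $\nabla L_\nu - \nabla L_{\nu'} = \nabla G(x_\nu) - \nabla G(x_{\nu'})$ and $L_\nu(x_\nu) - L_{\nu'}(x_\nu) = G(x_\nu) - J_{x_{\nu'}}G(x_\nu)$ and applying the SET to $G$ on $\tilde{Q}_\nu \cup \tilde{Q}_{\nu'}$ (Remark \ref{twosquares}); this yields exactly the powers of $\delta_\nu$ demanded by the formula. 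Summing with the bounded intersection property closes the estimate and proves $C$-optimality of $\|F\|_{\WR}$. The main obstacle is the patching calculation in the upper bound: the telescoping against the reference jet $L_\mu$ must be carried out carefully so that no uncontrolled affine growth survives the second differentiation, leaving only the explicit local seminorms and nearest-neighbor jet differences that define $\widehat{M}(f, L^\#)$.
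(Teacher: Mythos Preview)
Your approach is essentially the paper's: patch the local extensions $\widehat{T}_\nu(f_\nu,L_\nu)$ with a partition of unity subordinate to $\{1.1Q_\nu\}$, bound $\|\nabla^2 F\|_{L^p(Q_\mu)}$ by telescoping against a reference, and prove the matching lower bound via the SET on $\tilde Q_\nu\cup\tilde Q_{\nu'}$. Your variant of the upper bound---subtracting the \emph{affine} reference $L_\mu$ rather than the paper's functional reference $\widehat F_{\nu'}$---is a legitimate and slightly cleaner route to the same three Leibniz terms.

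One step is not right as written. The extension from $Q^\circ$ to $\R^2$ does \emph{not} ``cost nothing'' just because boundary CZ squares have sidelength $\gtrsim\delta_{Q^\circ}$. A bare cutoff $\theta F$ produces terms $F\,\nabla^2\theta$ and $\nabla F\otimes\nabla\theta$ of size $|F|\,\delta_{Q^\circ}^{-2}$ and $|\nabla F|\,\delta_{Q^\circ}^{-1}$, and $|F|,|\nabla F|$ near $\partial Q^\circ$ are governed by the \emph{values} of the jets $L_\nu$, which are not controlled by $\widehat M(f,L^\#)$. The paper fixes this in the standard way: set $L^\circ=J_0\overline F$ and define $\widehat F=\theta\overline F+(1-\theta)L^\circ=L^\circ+\theta(\overline F-L^\circ)$; then the SET on $Q^\circ$ bounds $\|\overline F-L^\circ\|_{W^{2,p}(Q^\circ)}\lesssim\|\overline F\|_{L^{2,p}(Q^\circ)}$, and the cutoff is harmless. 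With that correction your argument goes through.
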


\subsection{Proof of Proposition \ref{mainprop}}

For ease of notation, we let $\widehat{F}_\nu = \widehat{T}_\nu(f_\nu,L_\nu) \in L^{2,p}(\tilde{Q}_\nu)$. For each $1 \leq \nu \leq K$, fix $\theta_\nu \in C^\infty_c(1.1Q_\nu)$ which satisfies: $(1)$ $0 \leq \theta_\nu \leq 1$ for all $1 \leq \nu \leq K$, $(2)$ $\theta_\nu \equiv 1$ on $0.9 Q_\nu$, $(3)$ $\sum_{\nu} \theta_{\nu} \equiv 1$ on $Q^{\circ}$, and $(4)$ $|\frac{\partial^{\alpha}}{\partial x^\alpha} \theta_\nu| \lesssim \delta_\nu^{-|\alpha|}$ for all $|\alpha| \leq 2$. Define
\begin{equation*}
\overline{F} = \sum_{\nu} \theta_\nu \widehat{F}_\nu.
\end{equation*}
Clearly $\overline{F}$ depends linearly on $f$ and $L^\#$ since it is a linear combination of such functions. Using $P1$-$3$ of $\theta_\nu$, $P1$-2 of $\widehat{T}_\nu$, $E_\nu \subset 1.1Q_\nu$, and $x_\nu \in 0.9Q_\nu$, we have $\overline{F}|_{E} = f$, and $J_{E'} \overline{F} = L$. We now estimate $\|\overline{F}\|_{L^{2,p}(Q^\circ)}$.

\begin{lem}[$L^{2,p}(Q^\circ)$ Norm of $\overline{F}$]
We have
\begin{equation}
\|\overline{F}\|^p_{L^{2,p}(Q^\circ)} \lesssim \sum_{\nu} \widehat{M}^p_\nu + \sum_{\nu \leftrightarrow \nu'} \left[|\nabla L_\nu - \nabla L_{\nu'}|^p \delta^{2-p}_\nu + |L_\nu(x_\nu) - L_{\nu'}(x_\nu)|^p \delta^{2-2p}_\nu \right].
\label{fullbound1}
\end{equation}
\label{upperboundlem}
\end{lem}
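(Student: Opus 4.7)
My plan is to bound $\|\nabla^2 \overline{F}\|^p_{L^p(Q_\nu)}$ separately on each CZ square $Q_\nu$ and then sum. The first observation is a localization: since $\mathrm{supp}(\theta_{\nu'}) \subset 1.1Q_{\nu'}$ and the Good Geometry of $\Lambda$ forces $d(Q_\nu,Q_{\nu'}) \geq \tfrac{1}{10}\max\{\delta_\nu,\delta_{\nu'}\}$ whenever $Q_\nu \cap Q_{\nu'} = \emptyset$, only the neighbors $\nu' \leftrightarrow \nu$ can contribute to $\overline{F}$ on $Q_\nu$. In particular, the number of contributing $\nu'$ is bounded and for each such $\nu'$ we have $\delta_{\nu'} \approx \delta_\nu$ together with $\mathrm{supp}(\theta_{\nu'}) \cap Q_\nu \subset \tilde{Q}_{\nu'}$, so each $\widehat{F}_{\nu'}$ is available where it is needed.

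Using $\sum_{\nu'} \theta_{\nu'} \equiv 1$ on $Q^\circ$, I will rewrite
\begin{equation*}
\overline{F}\big|_{Q_\nu} = L_\nu + \sum_{\nu' \leftrightarrow \nu} \theta_{\nu'}(L_{\nu'} - L_\nu) + \sum_{\nu' \leftrightarrow \nu} \theta_{\nu'}(\widehat{F}_{\nu'} - L_{\nu'}).
\end{equation*}
The first term is affine, hence killed by $\nabla^2$, so I only need to estimate the two sums. For the affine-difference sum, expand $L_{\nu'}(x) - L_\nu(x) = [L_{\nu'}(x_\nu) - L_\nu(x_\nu)] + (\nabla L_{\nu'} - \nabla L_\nu) \cdot (x - x_\nu)$, so on $Q_\nu$ (where $|x-x_\nu|\lesssim \delta_\nu$ and $|\partial^\alpha \theta_{\nu'}| \lesssim \delta_\nu^{-|\alpha|}$) a direct computation gives
\begin{equation*}
|\nabla^2[\theta_{\nu'}(L_{\nu'}-L_\nu)]| \lesssim \delta_\nu^{-2} |L_{\nu'}(x_\nu) - L_\nu(x_\nu)| + \delta_\nu^{-1} |\nabla L_{\nu'} - \nabla L_\nu|.
\end{equation*}
Raising to the $p$-th power and integrating over the area-$\delta_\nu^2$ square $Q_\nu$ produces exactly the two jet-consistency terms in the RHS of \eqref{fullbound1}.

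For the remaining sum, set $G_{\nu'} = \widehat{F}_{\nu'} - L_{\nu'}$ on $\tilde{Q}_{\nu'}$; the key fact is $J_{x_{\nu'}} G_{\nu'} = 0$ by property P2 of $\widehat{T}_{\nu'}$, and $\|G_{\nu'}\|_{L^{2,p}(\tilde{Q}_{\nu'})} = \|\widehat{F}_{\nu'}\|_{L^{2,p}(\tilde{Q}_{\nu'})} \approx \widehat{M}_{\nu'}$. Expanding $\nabla^2(\theta_{\nu'} G_{\nu'}) = \theta_{\nu'} \nabla^2 G_{\nu'} + 2\nabla\theta_{\nu'} \otimes \nabla G_{\nu'} + G_{\nu'} \nabla^2 \theta_{\nu'}$, the first piece contributes $\lesssim \|\widehat{F}_{\nu'}\|_{L^{2,p}(\tilde{Q}_{\nu'})}^p$ trivially. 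For the other two pieces, I will use the SET on $\tilde{Q}_{\nu'}$ at the basepoint $x_{\nu'}$, which (thanks to $J_{x_{\nu'}} G_{\nu'} = 0$) yields the pointwise bounds $|\nabla G_{\nu'}(x)| \lesssim \delta_\nu^{1-2/p} \widehat{M}_{\nu'}$ and $|G_{\nu'}(x)| \lesssim \delta_\nu^{2-2/p} \widehat{M}_{\nu'}$ for $x \in Q_\nu \cap \mathrm{supp}(\theta_{\nu'})$. Combined with the bounds on $\nabla\theta_{\nu'}$ and $\nabla^2\theta_{\nu'}$, each contribution integrates (over area $\delta_\nu^2$) to $\lesssim \widehat{M}_{\nu'}^p$, as the factors $\delta_\nu$ cancel precisely because $p > 2$ gives the right homogeneity. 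Summing over $\nu$ and the $O(1)$ neighbors $\nu'$, and invoking the Bounded Intersection Property of $\{\tilde{Q}_\nu\}$ to convert $\sum_\nu \sum_{\nu' \leftrightarrow \nu}$ into $\sum_\nu$ up to a universal factor, produces \eqref{fullbound1}. The main obstacle is bookkeeping: one must verify the cancellation of powers of $\delta_\nu$ when the derivatives of $\theta_{\nu'}$ meet the Sobolev-controlled derivatives of $G_{\nu'}$, and confirm that every evaluation of $\widehat{F}_{\nu'}$ or its jet occurs inside $\tilde{Q}_{\nu'}$ where $\widehat{F}_{\nu'}$ is defined.
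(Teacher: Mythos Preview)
Your proof is correct and follows the same overall strategy as the paper: localize to each $Q_\nu$, use that only neighbors contribute, expand the Hessian via Leibniz, and control the lower-order terms by the SET together with the derivative bounds on $\theta_{\nu'}$. The organizational difference is in the choice of reference subtracted. The paper subtracts the full local extension $\widehat{F}_{\nu'}$ on the current square (using $\sum_\nu \partial^\alpha \theta_\nu = 0$), so the cross terms are $\widehat{F}_\nu - \widehat{F}_{\nu'}$, which are then split into $(L_\nu - L_{\nu'}) + (\widehat{F}_\nu - L_\nu) - (\widehat{F}_{\nu'} - L_{\nu'})$ and estimated via the $L^p$ form of the SET. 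You instead subtract the affine $L_{\nu'}$ from each $\widehat{F}_{\nu'}$ at the outset, cleanly separating the jet-consistency terms $\theta_{\nu'}(L_{\nu'}-L_\nu)$ from the tangential pieces $G_{\nu'} = \widehat{F}_{\nu'}-L_{\nu'}$ with $J_{x_{\nu'}}G_{\nu'}=0$, and then apply the pointwise SET. Your decomposition is arguably tidier, since the two sources of error on the right-hand side of \eqref{fullbound1} are isolated from the start; the paper's version keeps them entangled until the final step. Both routes are standard and equivalent in strength.
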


\begin{proof}
It follows from $P3$ of $\theta_\nu$ that for any $x \in Q^\circ$, and $\alpha$ with $0 < |\alpha| \leq 2$,
\begin{equation*}
\sum_{\nu} \frac{\partial^{\alpha}}{\partial x^\alpha} \theta_\nu (x) = \frac{\partial^{\alpha}}{\partial x^\alpha}  \sum_{\nu} \theta_\nu (x) = \frac{\partial^{\alpha}}{\partial x^\alpha} 1 =  0.
\end{equation*}
Fix $\nu' \in \{1,2,\cdots,K\}$, then using the previous identity one can rewrite 
\begin{equation}
\label{hesseq}
\nabla^2 \overline{F} = \sum_{\nu} \nabla^2 (\widehat{F}_\nu) \theta_\nu + 2 \sum_{\nu} \nabla(\widehat{F}_\nu - \widehat{F}_{\nu'}) \otimes \nabla(\theta_\nu) + \sum_{\nu} (\widehat{F}_\nu - \widehat{F}_{\nu'}) \nabla^2(\theta_\nu),
\end{equation}
with $(v^1 \otimes v^2)_{ij} = (1/2)(v^1_i v^2_j + v^1_j v^2_i)$ the symmetrized tensor product.

Our plan is to first get a bound on $\|\overline{F}\|^p_{L^{2,p}(Q_{\nu'})}$, and then to sum this bound over $\nu' \in \{1,2,\ldots,K\}$ to recover a bound on the full integral norm as in \eqref{fullbound1}. For $x \in Q_{\nu'}$, if $x \in \mbox{supp} (\theta_\nu) \subset 1.1Q_\nu$ for some $\nu$ then $x \in \tilde{Q}_\nu \cap Q_{\nu'}$. From the Good Geometry of the CZ squares it follows that $\nu \leftrightarrow \nu'$; moreover, for each fixed $\nu'$. this may occur for at most a bounded number of $\nu$. Thus, by \eqref{hesseq},
\begin{align}
\label{term1} 
\|\nabla^2 \overline{F}\|^p_{L^p( Q_{\nu'})} &\lesssim \sum_{\nu \leftrightarrow \nu'} \| \nabla^2 (\widehat{F}_\nu) \theta_\nu \|^p_{L^p( Q_{\nu'})} \\
& + \sum_{\nu \leftrightarrow \nu'} \| \nabla(\widehat{F}_\nu - \widehat{F}_{\nu'}) \otimes \nabla(\theta_\nu) \|^p_{L^p( Q_{\nu'})} \label{term2} \\
& + \sum_{\nu \leftrightarrow \nu'} \| (\widehat{F}_\nu - \widehat{F}_{\nu'}) \nabla^2(\theta_\nu)\|^p_{L^p( Q_{\nu'})}
\label{term3}
\end{align}
We start with an estimation of a term in the first sum on the RHS of \eqref{term1}. For any $\nu$ with $\nu \leftrightarrow \nu'$,
\begin{equation}
\| \nabla^2 (\widehat{F}_\nu) \theta_\nu \|^p_{L^p( Q_{\nu'})} \leq \| \nabla^2 (\widehat{F}_\nu) \|^p_{L^p( {\tilde{Q}_\nu})} \approx \widehat{M}^p_\nu. \label{term1a}
\end{equation}
Here, the first inequality follows from $\mbox{supp}(\theta_\nu) \subset \tilde{Q}_\nu$ and $P1$ of $\theta_\nu$, and the ``$\approx$" from $P3$ of $\widehat{T}_\nu$. We now examine a term in the sum given in \eqref{term2}: Recall that $\nabla F(x_\nu) = \nabla L_\nu$ for all $1 \leq \nu \leq K$. $P1$ and 4 of $\theta_\nu$, as well as the SET imply that for any $\nu$ with $\nu \leftrightarrow \nu'$,
\begin{align}
\label{term2a} 
\| \nabla(\widehat{F}_\nu - \widehat{F}_{\nu'}) \otimes \nabla(\theta_\nu) \|^p_{L^p( Q_{\nu'})} & \lesssim \| (\nabla L_\nu - \nabla L_{\nu'}) \otimes \nabla \theta_\nu \|^p_{L^p( Q_{\nu'})} + \| (\nabla \widehat{F}_\nu - \nabla L_\nu) \otimes \nabla \theta_\nu\|^p_{L^p( Q_{\nu'})} \\
& \qquad\qquad + \| (\nabla L_{\nu'} - \nabla \widehat{F}_{\nu'}) \otimes \nabla \theta_\nu \|^p_{L^p( Q_{\nu'})} \notag{} \\
&\lesssim \delta^{2-p}_\nu |\nabla L_\nu - \nabla L_{\nu'}|^p + \delta^{-p}_\nu \| \nabla \widehat{F}_\nu - \nabla L_\nu\|^p_{L^p( \tilde{Q}_\nu)} \notag{} \\
& \qquad\qquad + \delta^{-p}_\nu \|\nabla L_{\nu'} - \nabla \widehat{F}_{\nu'}\|^p_{L^p( \tilde{Q}_{\nu'})} \notag{} \\
& \lesssim \delta^{2-p}_\nu |\nabla L_\nu - \nabla L_{\nu'}|^p + \|\widehat{F}_\nu\|^p_{L^{2,p}(\tilde{Q}_\nu)} + \|\widehat{F}_{\nu'}\|^p_{L^{2,p}(\tilde{Q}_{\nu'})} \notag{} \\
& \approx \delta^{2-p}_\nu |\nabla L_\nu - \nabla L_{\nu'}|^p + M^p_\nu + M^p_{\nu'}. \notag{}
\end{align}
Finally, we examine a term in \eqref{term3}. As before, for any $\nu$ with $\nu \leftrightarrow \nu'$, we find
\begin{align}
\label{term3a} 
\| (\widehat{F}_\nu - \widehat{F}_{\nu'}) \nabla^2(\theta_\nu)\|^p_{L^p( Q_{\nu'})} & \lesssim \delta^{-2p}_\nu \| L_\nu - L_{\nu'}\|^p_{L^p( Q_{\nu'})} + \|\widehat{F}_\nu\|^p_{L^{2,p}(\tilde{Q}_\nu)} + \|\widehat{F}_{\nu'}\|^p_{L^{2,p}(\tilde{Q}_{\nu'})}\\
& \lesssim \delta^{2-2p}_\nu | L_\nu (x_{\nu}) - L_{\nu'} (x_{\nu})|^p + \delta^{2-p}_\nu |\nabla L_\nu - \nabla L_{\nu'}|^p +  M^p_\nu + M^p_{\nu'}, \notag{}
\end{align}
with the last inequality following since
\begin{equation*}
\| L_\nu - L_{\nu'}\|^p_{L^p( Q_{\nu'})} \lesssim \delta^2_\nu |L_\nu(x_{\nu}) - L_{\nu'}(x_{\nu}) |^p + \delta^{2+p}_\nu  |\nabla L_\nu - \nabla L_{\nu'}|^p.
\end{equation*}
Inserting the bounds \eqref{term1a},\eqref{term2a},\eqref{term3a} into \eqref{term1},\eqref{term2},\eqref{term3}, we find
\begin{equation*}
\|\nabla^2 \overline{F}\|^p_{L^p( Q_{\nu'})} \lesssim \sum_{\nu:\nu \leftrightarrow \nu'} \widehat{M}^p_\nu + \sum_{\nu: \nu \leftrightarrow \nu'} \left[ |\nabla L_\nu - \nabla L_{\nu'}|^p \delta_\nu^{2-p} + |L_\nu(x_{\nu}) - L_{\nu'}(x_{\nu})|^p  \delta^{2-2p}_\nu \right].
\end{equation*}
And finally, summing over $\nu' \in \{1,2,\cdots,K\}$,
\begin{equation*}
\|\overline{F}\|^p_{L^{2,p}(Q^\circ)} \lesssim \sum_{\nu} \widehat{M}^p_{\nu} + \sum_{\nu \leftrightarrow \nu'} \left[ |\nabla L_\nu - \nabla L_{\nu'}|^p \delta^{2-p}_\nu + |L_\nu(x_\nu) - L_{\nu'}(x_\nu)|^p \delta^{2-2p}_\nu \right].
\end{equation*}
Here, we have used that for each $\nu'$ there are at most $C$ indices $\nu$ with $\nu \leftrightarrow \nu'$. This implies that each pair $(\nu,\nu')$ with $\nu \leftrightarrow \nu'$ is over-counted at most $2C$ times in a sum of the form $\displaystyle \sum_{\nu'} \sum_{\nu : \nu \leftrightarrow \nu'}$. This completes the proof of Lemma \ref{upperboundlem}.
\end{proof}

Having just established Lemma \ref{upperboundlem}, we now extend $\overline{F}$ to a function in $L^{2,p}(\R^2)$ without increasing its norm by more than a constant factor. As usual, all methods will be linear in the initial data. 

Recall that $\overline{F}|_E = f$, and $J_{E'} \overline{F} = L$. Let $\theta \in C^\infty_c(Q^\circ)$ satisfy $(1)$ $\theta \equiv 1$ on $0.99 Q^\circ$, and $(2)$ $|\partial^{\alpha} \theta| \lesssim \delta_{Q^\circ}^{-|\alpha|}$ for $|\alpha| \leq 2$. Let $L^\circ= J_0 \overline{F}$, which obviously depends linearly on $f$ and $L^\#$. Now, define 
$$\widehat{F} = \theta \overline{F} + (1 - \theta)L^\circ = \theta (\overline{F} - L^\circ) + L^\circ.$$ 
Clearly $\widehat{F}$ depends linearly on $f$ and $L^\#$. Recall that $E \subset \frac{1}{10} Q^\circ \subset 0.99 Q^\circ$, and Lemma \ref{keyptlem} implies that $E' \subset 0.99Q^\circ$. Therefore, $P1$ of $\theta$ implies that $\widehat{F}|_E = f$, and $J_{E'} \widehat{F} = L$. Now, the SET, $\mbox{supp}(\theta) \subset Q^\circ$, and $P2$ of $\theta$ give
\begin{align}
\label{bound2}
\|\widehat{F}\|_{L^{2,p}(\R^2)} = \|\theta (\overline{F} - L^\circ)\|_{L^{2,p}(Q^\circ)} & \lesssim \|\nabla^2 \theta (\overline{F}-L^\circ)\|_{L^p(Q^\circ)} + \|\nabla \theta \otimes \nabla(\overline{F}-L^\circ)\|_{L^p(Q^\circ)} \\
& \qquad\qquad + \|\theta \nabla^2(\overline{F}-L^\circ)\|_{L^p(Q^\circ)} \lesssim \|\overline{F}\|_{L^{2,p}(Q^\circ)}. \notag{} 
\end{align}
Thus, \eqref{fullbound1} and \eqref{bound2} imply that
\begin{equation}
\label{bound3}
\|\widehat{F}\|^p_{\WR} \lesssim \sum_{\nu} \widehat{M}^p_\nu +  \sum_{\nu \leftrightarrow \nu'} \left[ |\nabla L_\nu - \nabla L_{\nu'}|^p \delta^{2-p}_\nu + |L_\nu(x_\nu) - L_{\nu'}(x_\nu)|^p \delta^{2-2p}_\nu \right].
\end{equation}
Therefore $\widehat{F}$ satisfies \textbf{(MEPa-b)} with the above control on the $\WR$-norm. The following result states that any function satisfying \textbf{(MEPa-b)} must have $\WR$-norm at \textit{least} as large as the quantity on the RHS of \eqref{bound3}. This will imply that $\|\widehat{F}\|_{\WR}$ is C-optimal with respect to \textbf{(MEPa-b)}, and moreover that $\|\widehat{F}\|_{\WR} \approx \widehat{M}(f,L^\#)$ with $\widehat{M}(f,L^\#)$ given by the RHS of \eqref{bound3}.

\begin{lem}
For any $F \in \WR$ with $F|_E = f$, and $J_{E'} F = L$, we have
\begin{equation*}
\|F\|^p_{\WR} \gtrsim \sum_{\nu} \widehat{M}^p_\nu + \sum_{\nu \leftrightarrow \nu'} \left[|\nabla L_\nu - \nabla L_{\nu'}|^p \delta^{2-p}_\nu + |L_\nu(x_\nu) - L_{\nu'}(x_\nu)|^p \delta^{2-2p}_\nu \right].
\end{equation*}
\label{lowerboundlem}
\end{lem}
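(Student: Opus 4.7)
The plan is to bound each of the three pieces of the claimed lower bound separately, using $F$ itself as a competitor in each local problem and applying the Sobolev embedding theorem on pairs of neighboring squares.

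First, for the local terms: given $F$ with $F|_E = f$ and $J_{E'}F = L$, the restriction $F|_{\tilde{Q}_\nu}$ satisfies $F|_{E_\nu} = f_\nu$ and $J_{x_\nu}F = L_\nu$. By the C-optimality of $\widehat{T}_\nu(f_\nu,L_\nu)$ within $L^{2,p}(\tilde Q_\nu)$ (property $P3$ of Theorem \ref{localthm}) together with $\widehat{M}_\nu \approx \|\widehat T_\nu(f_\nu,L_\nu)\|_{L^{2,p}(\tilde Q_\nu)}$, we deduce $\widehat{M}_\nu \lesssim \|F\|_{L^{2,p}(\tilde Q_\nu)}$. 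Summing the $p$'th powers and invoking the Bounded Intersection Property of $\{\tilde Q_\nu\}$ from Lemma \ref{goodgeom} yields $\sum_\nu \widehat{M}_\nu^p \lesssim \|F\|^p_{L^{2,p}(\R^2)}$.

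Next, for a pair $\nu \leftrightarrow \nu'$, note that $\nabla L_\nu = \nabla F(x_\nu)$ and $\nabla L_{\nu'} = \nabla F(x_{\nu'})$ by hypothesis, while $L_\nu(x_\nu) = F(x_\nu)$ and $L_{\nu'}(x_\nu) = J_{x_{\nu'}}F(x_\nu)$. By the Good Geometry of the CZ squares, $\delta_\nu \approx \delta_{\nu'}$ and $|x_\nu - x_{\nu'}| \lesssim \delta_\nu$; moreover $\tilde Q_\nu \cap \tilde Q_{\nu'} \neq \emptyset$, so Remark \ref{twosquares} allows us to apply the SET on the domain $\Omega_{\nu\nu'} := \tilde Q_\nu \cup \tilde Q_{\nu'}$ with $\mathrm{diam}(\Omega_{\nu\nu'}) \approx \delta_\nu$. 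This gives
\begin{align*}
|\nabla L_\nu - \nabla L_{\nu'}| &\lesssim |x_\nu - x_{\nu'}|^{1-2/p}\|F\|_{L^{2,p}(\Omega_{\nu\nu'})} \lesssim \delta_\nu^{1-2/p}\|F\|_{L^{2,p}(\Omega_{\nu\nu'})},\\
|L_\nu(x_\nu) - L_{\nu'}(x_\nu)| &= |F(x_\nu) - J_{x_{\nu'}}F(x_\nu)| \lesssim \delta_\nu^{2-2/p}\|F\|_{L^{2,p}(\Omega_{\nu\nu'})}.
\end{align*}
Raising to the $p$'th power and rearranging yields the pointwise bounds
\[
|\nabla L_\nu - \nabla L_{\nu'}|^p \delta_\nu^{2-p} \lesssim \|F\|^p_{L^{2,p}(\Omega_{\nu\nu'})}, \qquad |L_\nu(x_\nu) - L_{\nu'}(x_\nu)|^p \delta_\nu^{2-2p} \lesssim \|F\|^p_{L^{2,p}(\Omega_{\nu\nu'})}.
\]

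Finally, summing over all neighboring pairs $\nu \leftrightarrow \nu'$ and applying the Bounded Intersection Property once more (each pair $(\nu,\nu')$ is counted boundedly often, and each $\tilde Q_\nu$ appears in boundedly many $\Omega_{\nu\nu'}$) gives
\[
\sum_{\nu \leftrightarrow \nu'}\!\bigl[|\nabla L_\nu - \nabla L_{\nu'}|^p \delta_\nu^{2-p} + |L_\nu(x_\nu) - L_{\nu'}(x_\nu)|^p \delta_\nu^{2-2p}\bigr] \lesssim \sum_{\nu \leftrightarrow \nu'}\|F\|^p_{L^{2,p}(\Omega_{\nu\nu'})} \lesssim \|F\|^p_{L^{2,p}(\R^2)}.
\]
Combining this with the local estimate completes the proof. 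There is no serious obstacle here: everything reduces to the optimality of the local operators $\widehat{T}_\nu$ and the two-square version of SET, with the Good Geometry of $\Lambda$ supplying the necessary uniform control on sidelengths and overlaps.
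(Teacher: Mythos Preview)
Your proof is correct and follows essentially the same approach as the paper: use the C-optimality of $\widehat{T}_\nu$ with $F|_{\tilde Q_\nu}$ as competitor for the local terms, apply the two-square SET (Remark \ref{twosquares}) on $\tilde Q_\nu \cup \tilde Q_{\nu'}$ for the neighbor terms, and sum using the Bounded Intersection Property of $\{\tilde Q_\nu\}$. The paper's proof is slightly terser but identical in substance.
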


\begin{proof}
First, we note that
\begin{equation}
\label{ineq20}
\sum_{\nu} \widehat{M}_\nu^p \approx \sum_{\nu} \int_{\tilde{Q}_\nu} |\nabla^2 \widehat{F}_\nu(x)|^p dx \lesssim \sum_{\nu} \int_{\tilde{Q}_\nu} |\nabla^2 F(x)|^p dx \lesssim \int_{\R^2} |\nabla^2 F(x)|^p dx = \|F\|^p_{\WR}. \notag{}
\end{equation}
Here, the first ``$\lesssim$" follows from the essential optimality of the functions $\widehat{F}_\nu$, while the second follows from the Bounded Intersection Property of $(\tilde{Q}_\nu)_{\nu=1}^{K}$.

Applying the SET on the domain $\tilde{Q}_\nu \cup \tilde{Q}_{\nu'}$ for $\nu \leftrightarrow \nu'$ (see Remark \ref{twosquares}), we find
\begin{align}
\label{ineq21}
\sum_{\nu \leftrightarrow \nu'} \left[|\nabla L_\nu - \nabla L_{\nu'}|^p \delta^{2-p}_\nu + |L_\nu(x_\nu) - L_{\nu'}(x_\nu)|^p \delta^{2-2p}_\nu \right] & \lesssim \sum_{\nu \leftrightarrow \nu'} \|F\|^p_{L^{2,p}(\tilde{Q}_\nu \cup \tilde{Q}_{\nu'})} \\
& \lesssim \sum_{\nu} \|F\|^p_{L^{2,p}(\tilde{Q}_\nu)} \lesssim \|F\|^p_{\WR}. \notag{}
\end{align}
Here, the last two inequalities follow from the Bounded Intersection Property of $\{\tilde{Q}_\nu\}_{\nu=1}^K$. Together, \eqref{ineq20} and \eqref{ineq21} imply the lemma.
\end{proof}

Lemma \ref{upperboundlem} and Lemma \ref{lowerboundlem} imply that the function $\widehat{F} = \widehat{T}(f,L^\#)$ is a solution to the \textbf{(MEP)} with $L^{2,p}(\R^2)$-norm given up to a multiplicative universal constant by the RHS of \eqref{bound3}; Thus, we may take $\widehat{M}(f,L^\#)$ as in Proposition \ref{mainprop}. This completes the proof of Proposition \ref{mainprop}.

\section{Determining the optimal Whitney Field $L^\#$}
\label{findwhitfield}
The goal of this section is to find a jet $L^\# \in Wh(E^\#)$ depending linearly on $f$ which approximately minimizes the formula for $\widehat{M}$ from Proposition \ref{mainprop}. Recall that an extension of $f$ with essentially optimal $L^{2,p}(\R^2)$-norm is related to a solution of the \textbf{(MEP)} by Lemma \ref{minilem}, which states that
\begin{equation*}
\|f\|_{L^{2,p}(\R^2)|_E} \approx \inf_{L^\#} \widehat{M}(f,L^\#).
\end{equation*}
We will choose such an $L^\#$ through the following lemma. Afterward, we prove that it is an approximate minimum for $\widehat{M}(f,\cdot)$.
\begin{lem}
There exists a sufficiently small universal constant $c'>0$ such that the following holds: Let $E_0 \subset 0.9Q$, and $x_0 \in \frac{1}{2}Q$ satisfy $d(x_0,E_0) \geq \frac{1}{100}\delta_Q$. Moreover, suppose that $Q$ satisfies \textbf{R}$(c,c',c'')$ relative to $E_0$ for some universal constants $c,c''>0$. Then there is a linear operator $T_1:L^{2,p}(Q)|_{E_0} \rightarrow P$ with $T_1(f_0) \in \Gamma_Q(f_0,x_0,C \|f_0\|_{L^{2,p}(Q)|_{E_0}})$ for some large universal constant $C$, and all $f_0:E_0 \rightarrow \R$.
\label{keylem}
\end{lem}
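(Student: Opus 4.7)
My plan is to construct $T_1$ by linearly selecting three functionals $\Lambda_1, \Lambda_2, \Lambda_3$ on $L^{2,p}(Q)|_{E_0}$ and matching dual functionals $P_1, P_2, P_3$ on $P$ such that, for any interpolant $F$ of $f_0$ with $\|F\|_{L^{2,p}(Q)} \leq 2M$ (where $M := \|f_0\|_{L^{2,p}(Q)|_{E_0}}$), the discrepancies $|P_i(J_{x_0} F) - \Lambda_i(f_0)|$ admit scale-appropriate bounds: $|P_1(J_{x_0}F) - \Lambda_1(f_0)| \lesssim M \delta_Q^{2-2/p}$ (pointwise-value type) and $|P_i(J_{x_0}F) - \Lambda_i(f_0)| \lesssim M \delta_Q^{1-2/p}$ for $i=2,3$ (directional-derivative type). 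Defining $T_1(f_0)$ as the unique affine function with $P_i(T_1(f_0)) = \Lambda_i(f_0)$, and provided $\{P_i\}$ is a well-conditioned basis of $P^\ast$, the affine discrepancy $\Delta := T_1(f_0) - J_{x_0} F$ satisfies $|\Delta(x_0)| \lesssim M \delta_Q^{2-2/p}$ and $|\nabla \Delta| \lesssim M \delta_Q^{1-2/p}$. A smooth cutoff correction $F' := F + \theta \Delta$---with $\theta \in C_c^\infty(B(x_0, \delta_Q/200))$ (disjoint from $E_0$ by the standoff hypothesis) equal to $1$ near $x_0$---then satisfies $F'|_{E_0} = f_0$, $J_{x_0} F' = T_1(f_0)$, and the standard cutoff estimate $\|\theta \Delta\|_{L^{2,p}(Q)} \lesssim \delta_Q^{2/p-2}|\Delta(x_0)| + \delta_Q^{2/p-1}|\nabla \Delta| \lesssim M$, establishing $T_1(f_0) \in \Gamma_Q(f_0, x_0, CM)$.

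In the \textbf{(R1)} case, the three probes come directly from the four points $x_1, x_2, y_1, y_2$ in the hypothesis: set $\Lambda_1(f_0) = f_0(x_1)$, $\Lambda_2(f_0) = (f_0(x_1) - f_0(x_2))/|x_1 - x_2|$, $\Lambda_3(f_0) = (f_0(y_1) - f_0(y_2))/|y_1 - y_2|$, and define $P_1, P_2, P_3$ by the same formulas on affine $L$. Applying Lemma \ref{sobimbthm} to $h := F - J_{x_0} F$ (which vanishes to second order at $x_0$) gives $|h(x)| \lesssim M \delta_Q^{2-2/p}$ and $|\nabla h(x)| \lesssim M \delta_Q^{1-2/p}$ throughout $Q$; the pointwise bound yields the $\Lambda_1 - P_1$ estimate directly, while integrating $\nabla h$ along the segments joining $x_1$ to $x_2$ and $y_1$ to $y_2$ yields the $\Lambda_i - P_i$ estimates for $i=2,3$ uniformly in the segment lengths. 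The non-parallelism in \textbf{(R1)} gives the cross-product bound $|u \times v| \gtrsim (c'')^2$ for the two unit direction vectors entering $P_2, P_3$, which is exactly the conditioning needed for $\{P_1, P_2, P_3\}$ to form a well-conditioned basis of $P^\ast$.

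In the \textbf{(R2)} case when \textbf{(R1)} fails, the bound $\|E_0\|_{\bes} \leq c' \delta_Q^{2/p-1}$---with $c'$ chosen small enough for Lemma \ref{strlem} to apply to a near-optimal Besov curve through $E_0$---permits straightening $E_0$ onto a line via a diffeomorphism $\Phi$. Composing Proposition \ref{lbe} (linear 1D Besov extension) with Proposition \ref{trthm} (Besov-to-Sobolev lift), pulling back through $\Phi$ via Lemma \ref{sobdiff}, and applying the cutoff trick $F_3 := F_2 - \theta J_{x_0} F_2$ from the proof of Theorem \ref{localthm} to convert the $W^{2,p}(Q)$ bound into an $L^{2,p}(Q)$ bound, one obtains a linear extension $\tilde F(f_0) \in L^{2,p}(Q)$; setting $T_1(f_0) := J_{x_0} F_2(f_0)$ (the jet before cutoff) gives the desired selection. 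The main obstacle I anticipate is ensuring the resulting $L^{2,p}(Q)$-norm bound is by $M$ rather than by the larger $\|f_0\|_{W^{2,p}(Q)|_{E_0}}$ that arises naturally in the Besov-Sobolev chain; this is overcome by an initial linear affine normalization of $f_0$---subtracting $L(f_0) \in P$ selected by three linear functionals of $f_0$ as in the \textbf{(R1)} case but adapted to the straightened geometry---so that $\|f_0 - L(f_0)|_{E_0}\|_{W^{2,p}(Q)|_{E_0}} \approx M$.
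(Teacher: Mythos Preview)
Your treatment of Case \textbf{(R1)} is essentially the paper's argument: both build the affine function from one point value and two divided differences along the transversal secants, show via the mean value theorem and the Sobolev embedding that this jet is close to $J_{x_0}F$ for any near-optimal $F$, and correct with a cutoff supported away from $E_0$. The packaging through dual functionals $P_i$ is a clean way to say the same thing.

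Case \textbf{(R2)} is where your proposal and the paper diverge, and here your plan has a real gap. The paper does not attempt an explicit geometric normalization at all; instead it invokes Theorem~\ref{localthm} to write $\widehat{M}_Q(f_0,L_0)^p=\sum_i|\lambda_i(f_0,L_0)|^p$ with each $\lambda_i$ linear in $(f_0,L_0)$, and then proves an elementary lemma (``Claim~1'' in the proof): for any fixed vector $\beta\in\R^{N_0}$, the linear functional $S(z)=\|\beta\|_{l^p}^{-p}\sum_j z_j|\beta_j|^p/\beta_j$ satisfies $\sum_i|z_i-\beta_i S(z)|^p\lesssim\min_a\sum_i|z_i-\beta_i a|^p$. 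Applying this three times---once for each scalar coefficient of $L_0\in P$---produces a linear map $f_0\mapsto L_1$ with $\widehat{M}_Q(f_0,L_1)\approx\inf_{L_0}\widehat{M}_Q(f_0,L_0)\approx\|f_0\|_{L^{2,p}(Q)|_{E_0}}$, which is exactly the conclusion. The point is that for $p\neq 2$ the true minimizer of such an $l^p$-sum is nonlinear in $z$, but this weighted average is a linear near-minimizer; that observation is the entire content of Case~2.

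Your alternative---subtract a linearly chosen $L(f_0)\in P$ so that $\|f_0-L(f_0)|_{E_0}\|_{W^{2,p}(Q)|_{E_0}}\approx M$, then read off the jet of the explicit extension---does not work as you have described it. The phrase ``three linear functionals as in \textbf{(R1)} but adapted to the straightened geometry'' cannot be realized: you are precisely in the case where \textbf{(R1)} fails, so no pair of secants is transversal at scale $c''$, and any three-point interpolation system is ill-conditioned (the triangle area can be arbitrarily small even though $\|E_0\|_{\dot B_p}\geq c$). If instead you interpolate at two points and set the transverse derivative to zero, the discrepancy $L(f_0)-J_{x_0}F$ has uncontrolled transverse component $-\partial_{e_2}F(x_0)$; since $E_0$ is only \emph{near} a line (spread $\lesssim c'$ in the transverse direction, not zero), this leaks into $(L(f_0)-J_{x_0}F)|_{E_0}$ and forces $\|f_0-L(f_0)|_{E_0}\|_{W^{2,p}|_{E_0}}\gtrsim c'\,|\partial_{e_2}F(x_0)|$, which is not bounded by $M$. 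Working in straightened coordinates does not help, because the homogeneous $L^{2,p}$ seminorm is not preserved by the straightening diffeomorphism (Lemma~\ref{sobdiff} controls only $W^{2,p}$; the chain-rule term $\nabla F\cdot\nabla^2\Phi$ involves $|\nabla F|$, not $|\nabla^2 F|$). So the circularity you anticipated is real and your proposed escape from it does not close.
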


\begin{proof}
Through a standard rescaling argument we may assume that $Q=[-1/2,1/2]^2$. Let $f_0:E_0 \rightarrow \R$ be given. Since $Q$ satisfies \textbf{R}$(c,c',c'')$ relative to $E_0$, there are two cases to consider.

\noindent\textbf{Case 1:} There exist two unit vectors $v_1 = \frac{x_1-x_2}{|x_1 - x_2|}$, and $v_2 = \frac{y_1 - y_2}{|y_1 - y_2|}$ with $x_i,y_i \in E_0$ and so that $\min \{|v_1 - v_2|,|v_1 + v_2|\} > c''$. 

We form the matrix with rows given by $v_1$ and $v_2$:
\[ M_1 = \left( \begin{array}{ccc}
\; & v_1 & \; \\
\; & v_2 & \; \end{array} \right) \]
Notice that the condition $\min \{|v_1 - v_2|,|v_1 + v_2|\} > c''$ implies that $M_2 = M_1^{-1}$ has entries bounded by a universal constant.

Set $m_1 = \frac{f_0(x_1) - f_0(x_2)}{|x_1 - x_2|}$, and $m_2 = \frac{f_0(y_1) - f_0(y_2)}{|y_1 - y_2|}$. Choose $A \in \R^2$ given by $A^T = M_2 (m_1,m_2)^T$. Let $T_1(f_0) = L_1 \in P$ be the unique affine function satisfying $\nabla L_1 = A$ and $L_1(x_1) = f_0(x_1)$. Clearly, $L_1$ depends linearly on $f_0$. We now show that $L_1$ is in the desired $\Gamma_Q$.

Let $F \in L^{2,p}(Q)$ satisfy: $(1)$ $F|_{E_0} = f_0$, and $(2)$ $\|F\|_{L^{2,p}(Q)} \leq 2 \|f_0\|_{L^{2,p}(Q)|_{E_0}}$. The mean-value theorem implies the existence of two points $x^*, y^* \in Q$ with $v_1 \cdot \nabla F(x^*) = m_1$, and $v_2 \cdot \nabla F(y^*) = m_2$. Thus, from the SET,
$$|v_1 \cdot \nabla F(x_0) - m_1|, |v_2 \cdot \nabla F(x_0) - m_2| \lesssim \|F\|_{L^{2,p}(Q)}.$$
Equivalently, we may write $|M_1 (\nabla F(x_0))^T - (m_1,m_2)^T| \lesssim \|F\|_{L^{2,p}(Q)}$. Since the entries of $M_2=M^{-1}_1$ are bounded by a universal constant, this implies $|(\nabla F(x_0))^T - M_2 (m_1,m_2)^T| \lesssim \|F\|_{L^{2,p}(Q)}$. Thus, 
\begin{equation}
|\nabla F(x_0) - \nabla L_1| = |\nabla F(x_0) - A| = |\nabla F(x_0) - (m_1,m_2) M^T_2| \lesssim \|F\|_{L^{2,p}(Q)}.
\label{yoyo1}
\end{equation}
From this it follows that
\begin{align}
\label{yoyo2}
|F(x_0) - L_1(x_0)| & \leq |F(x_1) - L_1(x_1)| + |(F(x_0) - L_1(x_0)) - (F(x_1) - L_1(x_1)))| \\
& = |(F(x_0) - F(x_1)) + (A \cdot(x_1-x_0))| \leq |F(x_0) - J_{x_0}F(x_1) + A \cdot(x_1-x_0)| \notag{} \\
& + |J_{x_0}F(x_1) - F(x_1)| \lesssim \|F\|_{L^{2,p}(Q)}, \notag{}
\end{align}
with the ``$=$" following since $F(x_1)=f_0(x_1)=L_1(x_1)$, and the ``$\lesssim$" following from $F(x_0) - J_{x_0}F(x_1) = - \nabla F(x_0) \cdot (x_1-x_0)$, the SET, and \eqref{yoyo1}. Note that \eqref{yoyo1} and \eqref{yoyo2} imply $\|J_{x_0}F - L_1\|_{W^{2,p}(Q)} \lesssim \|F\|_{L^{2,p}(Q)}$. 

Now, choose a cutoff function $\theta_1 \in C_c^\infty(B(x_0,\frac{1}{100}))$ with: $(1)$ $\theta_1 \equiv 1$ on $B(x_0,\frac{1}{200})$, and $(2)$ $|\partial^{\alpha} \theta_1| \lesssim 1$ for all $|\alpha| \leq 2$. Since $d(x_0,E_0) \geq \frac{1}{100}$, one has $\theta_1|_{E_0} = 0$. Let $G = F + \theta_1 (L_1 - J_{x_0}F)$, for which
\begin{enumerate}
\item $G|_{E_0} = F|_{E_0} + 0 = f_0$;
\item $\|G\|_{L^{2,p}(Q)} \lesssim \|F\|_{L^{2,p}(Q)} \approx \|f_0\|_{L^{2,p}(Q)|_{E_0}}$;
\item $J_{x_0} G = J_{x_0} F + L_1 - J_{x_0} F = L_1$.
\end{enumerate}
Thus, $L_1 \in \Gamma_Q(f_0,x_0,C\|f_0\|_{L^{2,p}(Q)|_{E_0}})$ for some large universal constant $C$.

\textbf{Case 2:} $\|E_0\|_{\bes} \leq c'$, with $c'$ sufficiently small as mentioned in the hypotheses.

By choosing $c'$ sufficiently small, one can arrange for the Geometric Assumptions from Section \ref{LMEP} to be satisfied. Thus Theorem \ref{localthm} applies, and so there exists a bounded linear operator $\widehat{T}_0:L^{2,p}(Q)|_{E_0} \times P \rightarrow L^{2,p}(Q)$, and a non-negative real number $\widehat{M}_Q(\cdot,\cdot)$, with $(1)$ $\widehat{T}_0(f_0,L_0)|_{E_0} = f_0$, $(2)$ $J_{x_0}\widehat{T}_0(f_0,L_0) = L_0$, $(3)$ $\|\widehat{T}_0(f_0,L_0)\|^p_{L^{2,p}(Q)}$ is $C$-optimal with respect the the two properties above, and $(4)$ $\|\widehat{T}_0(f_0,L_0)\|^p_{L^{2,p}(Q)} \approx \widehat{M}_Q(f_0,L_0)^p  = \sum_{i=1}^{N_0} |\lambda_i(f_0,L_0)|^p$.

From the defining characteristics of $\widehat{M}_Q$ given above ($P3$ and $P4$), we find that $\inf \{\widehat{M}_Q(f_0,L_0): L_0 \in P\} \approx \|f_0\|_{L^{2,p}(Q)}$. In fact, a stronger statement is true: For any $L_1 \in P$,
\begin{equation}
\label{bnorm0}
\widehat{M}_Q(f_0,L_1) \approx \inf \{\widehat{M}_Q(f_0,L_0): L_0 \in P\} \; \Rightarrow \; L_1 \in \Gamma_Q(f_0,x_0,C\|f_0\|_{L^{2,p}(Q)|_{E_0}}),
\end{equation}
for some universal constant $C$. Suppose that could find $L_1 \in P$ with
\begin{equation}
\label{bnorm1}
\sum_{i=1}^{N_0} |\lambda_i(f_0,L_1)|^p \approx \inf \{\sum_{i=1}^{N_0} |\lambda_i(f_0,L_1)|^p: L_0 \in P\}.
\end{equation}
From \eqref{bnorm0}, and the given form of $\widehat{M}_Q$, such an $L_1$ would lie in the desired $\Gamma_Q$. Thus, we are lead to the problem of minimizing variant $l^p$-norms such as those appearing on the RHS of \eqref{bnorm1}. The following claim will be useful for this purpose.

\textbf{Claim 1:}
Let $\beta=(\beta_i)_{i=1}^{N_0} \in \R^{N_0}$ be given. Then $S: \R^{N_0} \rightarrow \R$ defined by $\displaystyle S(z) = \frac{1}{\|\beta\|^p_{l^p}}\sum_{j=1}^{N_0} \frac{z_j}{\beta_j} |\beta_j|^p$ for $\beta \neq 0$ (and $S(z)=0$ for $\beta=0$) satisfies:
\begin{equation}
\label{yodude2}
\sum^{N_0}_{i=1} |z_i - \beta_i S(z)|^p \lesssim \min_{a \in \R} \{\sum^{N_0}_{i=1} |z_i - \beta_i a|^p\}.
\end{equation}
In order to prove Claim 1, first note that we may assume without loss that $\beta_i \neq 0$ for all $i=1,\cdots,N_0$. This follows since the terms in \eqref{yodude2} for which $\beta_i=0$ have no effect on the minimization problem, and can be removed by projecting out coordinates. In the case when $\beta=0$, obviously $S(z)=0$ solves the problem of interest. The more general case follows from an application of H\"older's inequality, which we leave out for sake of brevity.

Expand an arbitrary $L \in P$ in coordinates as $L(u,v) = a_1 u + a_2 v + b$, for $(u,v)$ any Euclidean coordinates. As in the RHS of \eqref{bnorm1}, we would like to minimize $|\sum_{i} \lambda_i(f_0,a_1u + a_2v + b)|^p$ over choices of $a_1,a_2$, and $b$. We apply Claim 1 three consecutive times to solve for optimal choices for each of the coefficients $a_1,a_2$, and $b$ in terms of the remaining coefficients and the function $f_0$. In this way, we get a linear map $S: L^{2,p}(\R^2)|_{E_0} \rightarrow \R^3$ with $(\overline{a}_1,\overline{a}_2,\overline{b})=S(f_0)$ satisfying
\begin{equation*}
\sum^{N_0}_{i=1} |\lambda_i(f_0,\overline{a}_1 u + \overline{a}_2 v + \overline{b})|^p \approx \inf\{ \sum^{N_0}_{i=1} |\lambda_i(f_0, L_0)|^p : L_0 \in P\}.
\end{equation*}
Set $L_1 = \overline{a}_1 u + \overline{a}_2 v + b$, which by \eqref{bnorm0} and \eqref{bnorm1} satisfies $L_1 \in \Gamma_Q(f_0,x_0,C\|f_0\|_{L^{2,p}(Q)|_{E_0}})$ as desired. The proof of the lemma for Case 2 is now complete.
\end{proof}
The next result will be useful in our proof of optimality for the output of the preceding lemma.
\begin{lem}
Let $Q \subset \R^2$ and $E_0 \subset 0.9Q$ be given. Let $x_0 \in \frac{1}{2}Q$ satisfy $d(x_0,E_0) \geq \frac{1}{100}\delta_Q$. Let $L \in P$ be given. Recall that $\widehat{M}_Q(0,L) \approx \inf \{\|h\|_{L^{2,p}(Q)} : h|_{E_0}=0, J_{x_0} h = L\}$. Then 
\begin{equation*}
\widehat{M}_Q(0,L)^p \lesssim |L(x_0)|^p \delta_Q^{2-2p} + |\nabla L|^p \delta_Q^{2-p}
\end{equation*}
Moreover, suppose that  $\|E_0\|_{\bes} \geq c \delta_Q^{1-2/p}$ for some given universal constant $c>0$, then
\begin{equation*}
\widehat{M}_Q(0,L)^p \approx |L(x_0)|^p \delta_Q^{2-2p} + |\nabla L|^p \delta_Q^{2-p}.
\end{equation*}
\label{lem100}
\end{lem}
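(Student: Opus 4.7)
I will obtain the upper bound by exhibiting an explicit competitor. Fix $\theta \in C_c^\infty(B(x_0,\tfrac{1}{200}\delta_Q))$ with $\theta \equiv 1$ on $B(x_0,\tfrac{1}{400}\delta_Q)$ and $|\partial^\alpha \theta| \lesssim \delta_Q^{-|\alpha|}$ for $|\alpha|\leq 2$. Since $d(x_0,E_0) \geq \tfrac{1}{100}\delta_Q$, the support of $\theta$ is disjoint from $E_0$, so $h := \theta L$ vanishes on $E_0$, while $\theta \equiv 1$ near $x_0$ gives $J_{x_0}h = L$. Expanding $\nabla^2 h = L\,\nabla^2\theta + 2\,\nabla L \otimes \nabla\theta$, using $|L(x)| \lesssim |L(x_0)| + |\nabla L|\delta_Q$ on $\mbox{supp}(\theta)$, the derivative bounds on $\theta$, and $|\mbox{supp}(\theta)| \lesssim \delta_Q^2$, yields $\|h\|_{L^{2,p}(Q)}^p \lesssim |L(x_0)|^p\delta_Q^{2-2p} + |\nabla L|^p\delta_Q^{2-p}$, and hence the first inequality of the lemma.

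\textbf{Lower bound.} For the matching direction, fix any $h$ with $h|_{E_0}=0$, $J_{x_0}h=L$, and set $M := \|h\|_{L^{2,p}(Q)}$. Since the hypothesized Besov lower bound forces $E_0$ to be non-empty, choose any $y \in E_0$: then $|L(x_0)| = |h(x_0)-h(y)|$, and $|x_0-y| \lesssim \delta_Q$ together with the Sobolev embedding theorem (Lemma \ref{sobimbthm}) yields $|L(x_0)| \lesssim \delta_Q^{2-2/p}M$, i.e.\ $|L(x_0)|^p\delta_Q^{2-2p} \lesssim M^p$. For the gradient, I plan to extract two pairs of points $\{y_i^-,y_i^+\}_{i=1,2} \subset E_0$ whose unit direction vectors $v_i := (y_i^+-y_i^-)/|y_i^+-y_i^-|$ satisfy $\min\{|v_1-v_2|,|v_1+v_2|\} \gtrsim 1$. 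Since $h$ vanishes at both endpoints of each segment (which lies in $Q$), the mean value theorem produces $z_i \in Q$ with $v_i \cdot \nabla h(z_i) = 0$, and the SET then gives $|v_i \cdot \nabla L| = |v_i \cdot (\nabla h(x_0)-\nabla h(z_i))| \lesssim \delta_Q^{1-2/p}M$. The well-conditioning of $v_1, v_2$ allows inversion of the resulting $2\times 2$ system, yielding $|\nabla L| \lesssim \delta_Q^{1-2/p}M$ and hence $|\nabla L|^p\delta_Q^{2-p} \lesssim M^p$.

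\textbf{Main obstacle.} The nontrivial step is producing the well-separated directions $v_1,v_2$ from the Besov lower bound on $\|E_0\|_\bes$. I plan to argue by contrapositive: if every pair of distinct points in $E_0$ yields a direction lying within a small angular window $\eta$ of a single common axis, then $E_0$ sits in a tube of width $O(\eta\delta_Q)$ about a line. Rescaling $Q$ to $[-1/2,1/2]^2$ (which rescales Besov semi-norms of subsets by $\delta_Q^{1-2/p}$), choosing Euclidean coordinates $(u,v)$ aligned with the tube axis so that two widely-separated points of the rescaled set play the role of $x_0, y_0$ in Lemma \ref{nicecurve}, and applying that lemma, provides a representing $\tilde\varphi \in \besfull(\R)$ with $\|\tilde\varphi\|_{\besfull(\R)} \lesssim \eta$ — equivalently $\|E_0\|_\bes \lesssim \eta \cdot \delta_Q^{2/p-1}$. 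Choosing $\eta$ small enough relative to the constant in the Besov lower-bound hypothesis forces a contradiction, so the required pair of directions exists, completing the argument.
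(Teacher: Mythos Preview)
Your upper bound is correct and matches the paper's argument. Your bound on $|L(x_0)|$ is essentially right as well, though note that the SET gives $|L(y)| = |J_{x_0}h(y)-h(y)| \lesssim \delta_Q^{2-2/p}M$, not $|h(x_0)-h(y)| \lesssim \delta_Q^{2-2/p}M$ directly; you only recover $|L(x_0)|$ from $|L(y)|$ after controlling $|\nabla L|$, so the two estimates should be established in the opposite order.

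The real gap is in your gradient argument. The implication ``all pairwise directions in $E_0$ lie within angle $\eta$ of a common axis $\Rightarrow \|E_0\|_{\bes} \lesssim \eta\,\delta_Q^{2/p-1}$'' is \emph{false}. After rescaling to the unit square, consider the zigzag set $E_0 = \{(k/N,\, (-1)^k\eta/(2N)) : 0 \le k \le N\}$: every pairwise slope has magnitude at most $\eta$, yet the Besov semi-norm of $E_0$ is $\approx \eta N^{1-1/p}$ (compute via the formula for $M$ in Proposition~\ref{lbe}), which exceeds $c$ once $N \gtrsim (c/\eta)^{p/(p-1)}$ while $\eta$ remains arbitrarily small. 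Your appeal to Lemma~\ref{nicecurve} cannot rescue this: that lemma \emph{assumes} an upper bound $\|\Omega\|_{\bes}\le\kappa_1$ as a hypothesis and outputs a curve with $\besfull$-norm $\lesssim \kappa_1+\kappa_2$; it does not manufacture a Besov bound from the tube condition ($\kappa_2$) alone.

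The paper's proof avoids the combinatorics of $E_0$ entirely and works with $h$ itself. Arguing by contradiction, if $|\nabla L| > C_1 \widehat M_Q(0,L)$ for large $C_1$, then the SET transports the large gradient from $x_0$ to a point $\bar x_0 \in E_0$, and the rescaled function $\bar h = h/(C_2\,\widehat M_Q(0,L))$ satisfies $|\nabla \bar h(\bar x_0)| \ge 1$ with $\|\bar h\|_{L^{2,p}(Q)} \le 2/C_2$. The Implicit Function Theorem (Lemma~\ref{impft}) then forces the zero set of $\bar h$---which contains $E_0$---to have Besov semi-norm $\lesssim 1/C_2$, contradicting the hypothesis $\|E_0\|_{\bes}\ge c$ once $C_2$ is large. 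This use of Lemma~\ref{impft} is the missing ingredient in your approach.
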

\begin{proof}
Through a standard rescaling argument we may assume that $Q=[-1/2,1/2]^2$.

Let $\theta \in C_c^{\infty} (B(x_0,\frac{1}{150}))$ satisfy $(1)$ $0 \leq \theta \leq 1$, $(2)$ $\theta \equiv 1$ on $B(x_0,\frac{1}{200})$, and $(3)$ $|\partial^{\alpha} \theta| \lesssim 1$ for all $\alpha$ with $|\alpha| \leq 2$. Then $\tilde{h} = \theta L$ satisfies the necessary properties to be included among the functions in the infimum defining $\widehat{M}_Q(0,L)$. Also, a simple calculation shows that $\|\tilde{h}\|^p_{L^{2,p}(Q)} \lesssim |L(x_0)|^p + |\nabla L|^p$. Thus,
\begin{equation}
\label{e99}
\widehat{M}_Q(0,L)^p \lesssim |L(x_0)|^p + |\nabla L|^p.
\end{equation}
We now assume that $\|E_0\|_{\bes} \geq c$, and show 
\begin{equation}
|L(x_0)|^p + |\nabla L|^p \lesssim \|h\|^p_{L^{2,p}(Q)} \approx \widehat{M}_Q(0,L)^p.
\label{e100}
\end{equation}
Let $h \in L^{2,p}(Q)$ essentially attain the infimum in the definition of $\widehat{M}_Q(0,L)$; that is:  $(1)$ $J_{x_0} h = L$, $(2)$ $h|_{E_0} = 0$, and $(3)$ $\|h\|_{L^{2,p}(Q)} \leq 2 \widehat{M}_Q(0,L)$.

If $L = 0$, then \eqref{e100} obviously holds, and we are done. Therefore we may assume that $L \neq 0$. As a consequence, we now show that $\widehat{M}_Q(0,L) \neq 0$. To see this, suppose for sake of contradiction that $\widehat{M}_Q(0,L)=0$. Then $P3$ of $h$ would imply that $h$ is affine, and in particular not equal to the zero function (by $P1$ of $h$). Thus, $P2$ of $h$ would imply that $E_0$ is contained in a line (the zero set of $h$), and so $\|E_0\|_{\bes}=0$. This contradicts the assumption that $\|E_0\|_{\bes} \geq c$. For the remainder, we now assume that $L \neq 0$ and $\widehat{M}_Q(0,L) \neq 0$.

Since $E_0 \neq \emptyset$, we may choose $\overline{x}_0 \in E_0$. The SET implies that
\begin{equation*}
|L(\overline{x}_0)| = |J_{x_0}h(\overline{x}_0) - h(\overline{x}_0)| \lesssim \|h\|_{L^{2,p}(Q)},
\end{equation*}
and so
\begin{equation}
|L(x_0)| \lesssim |L(\overline{x}_0)| + |\nabla L| \lesssim \|h\|_{L^{2,p}(Q)} + |\nabla L|
\label{e101}
\end{equation}
If we show that $|\nabla L| \lesssim \widehat{M}_Q(0,L)$, then \eqref{e100} will follow from \eqref{e101}. For sake of contradiction, suppose that $|\nabla L| > C_1 \widehat{M}_Q(0,L)$ for some sufficiently large universal constant $C_1$. The SET and the previously stated properties of $h$ imply that
\begin{equation*}
|\nabla h(\overline{x}_0)| \geq |\nabla h(x_0)| - C \|h\|_{L^{2,p}(Q)} \geq |\nabla L| - C' \widehat{M}_Q(0,L) > (C_1 - C')\widehat{M}_Q(0,L).
\end{equation*}
By choosing $C_1$ sufficiently large depending on $C_2$, we can arrange for $\overline{h} = \frac{h}{C_2 \widehat{M}_Q(0,L)}$ to satisfy: $(1)$ $\overline{h}|_{E_0} = 0$, in particular $h(\overline{x}_0) = 0$, $(2)$ $|\nabla \overline{h}(\overline{x}_0)| \geq 1$, and $(3)$ $\|\overline{h}\|_{L^{2,p}(Q)} \leq 2/C_2$. We now apply Lemma \ref{impft} to $\gamma = \{x \in 0.9Q: \overline{h}(x)=0\}$, which implies $\|\gamma\|_{\bes} \lesssim 2/C_2$. By choosing $C_2$ sufficiently large, we can arrange
$$\|\gamma\|_{\bes} \leq \frac{c}{2}.$$
But recall that $E_0 \subset \gamma$ from $P1$ of $\overline{h}$, and so $\|E_0\|_{\bes} \leq \|\gamma\|_{\bes} \leq \frac{c}{2}$. But, this contradicts $\|E_0\|_{\bes} \geq c$. Therefore, we have shown $|\nabla L| \lesssim \widehat{M}_Q(0,L)$. Together with \eqref{e101}, this completes the proof of \eqref{e100}, and thus also the second half of the lemma.
\end{proof}

We set $E^\#_\mu = 9Q^\#_\mu \cap E$. Recall \textbf{(K2)}, which states that $9Q^\#_\mu$ satisfies \textbf{R}$(c_1,c_3,c_2)$ relative to $E^\#_\mu$. Equivalently, $10Q^\#_\mu$ satisfies \textbf{R}$(c_1(9/10)^{2/p-1},c_3(9/10)^{2/p-1},c_2)$ relative to $E^\#_\mu$. Also, the \textit{Keystone representative point} $x^\#_\mu$ (see Section \ref{points}) satisfies:
$$x^\#_\mu \in \frac{1}{2}Q^\#_\mu \; \mbox{and} \; d(x^\#_\mu,E^\#_\mu) \geq d(x^\#_\mu,E) \geq \frac{1}{5}\delta_{Q^\#_\mu} \geq \frac{1}{100} \delta_{10Q^\#_\mu}.$$

For $c_3$ small enough, Lemma \ref{keylem} applies, and so for each $1 \leq \mu \leq K^\#$ there exists a linear operator $T^\#_{\mu} : L^{2,p}(10Q^\#_\mu)|_{E^\#_\mu} \rightarrow P$ with
\begin{equation*}
\tilde{L}^\#_\mu = T^\#_\mu(f|_{E^\#_\mu}) \in \Gamma_{10Q^\#_\mu}(f|_{E^\#_\mu}, x^\#_\mu, C\|f|_{E^\#_\mu}\|_{L^{2,p}(10Q^\#_\mu)|_{E^\#_\mu}}).
\end{equation*}

\begin{lem}
\label{bestposslem}
Let $\tilde{L}^\#$ be defined as above. Then $\widehat{M}(f,\tilde{L}^\#) \lesssim \widehat{M}(f,L^\#)$ for all $L^\# \in Wh(E^\#)$.
\end{lem}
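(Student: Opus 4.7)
The plan is to upper-bound $\widehat{M}(f,\tilde{L}^\#)$ by constructing an explicit interpolant $\tilde{F}$ of $f$ whose jets on $E'$ realize the constant-path extension $\tilde{L}$ of $\tilde{L}^\#$, with $\|\tilde{F}\|_{L^{2,p}(\R^2)} \lesssim \widehat{M}(f,L^\#)$. The $C$-optimality clause of Proposition \ref{mainprop} then gives
\[
\widehat{M}(f,\tilde{L}^\#) \approx \|\widehat{T}(f,\tilde{L}^\#)\|_{L^{2,p}(\R^2)} \lesssim \|\tilde{F}\|_{L^{2,p}(\R^2)} \lesssim \widehat{M}(f,L^\#).
\]

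First I would take $F := \widehat{T}(f,L^\#)$, so that $\|F\|_{L^{2,p}(\R^2)} \approx \widehat{M}(f,L^\#)$, $F|_E = f$, and $J_{x^\#_\mu} F = L^\#_\mu$ for each Keystone $\mu$. On $10Q^\#_\mu$ the restriction $F|_{10Q^\#_\mu}$ is an $L^{2,p}$-interpolant of $f|_{E^\#_\mu}$, so $L^\#_\mu \in \Gamma_{10Q^\#_\mu}(f|_{E^\#_\mu}, x^\#_\mu, \|F\|_{L^{2,p}(10Q^\#_\mu)})$; the same $\Gamma$ contains $\tilde{L}^\#_\mu$, up to a universal factor, by its construction via Lemma \ref{keylem} combined with $\|f|_{E^\#_\mu}\|_{L^{2,p}(10Q^\#_\mu)|_{E^\#_\mu}} \leq \|F\|_{L^{2,p}(10Q^\#_\mu)}$. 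Differencing the two realizations yields a function on $10Q^\#_\mu$ vanishing on $E^\#_\mu$ with jet $\tilde{L}^\#_\mu - L^\#_\mu$ at $x^\#_\mu$ and norm $\lesssim \|F\|_{L^{2,p}(10Q^\#_\mu)}$; hence $\widehat{M}_{10Q^\#_\mu}(0,\tilde{L}^\#_\mu - L^\#_\mu) \lesssim \|F\|_{L^{2,p}(10Q^\#_\mu)}$. By \textbf{(K2)} and Lemma \ref{kap} we have $\|E^\#_\mu\|_{\bes} \gtrsim (\delta^\#_\mu)^{2/p-1}$, so the sharp half of Lemma \ref{lem100} upgrades this to
\[
|(\tilde{L}^\#_\mu - L^\#_\mu)(x^\#_\mu)|^p (\delta^\#_\mu)^{2-2p} + |\nabla(\tilde{L}^\#_\mu - L^\#_\mu)|^p (\delta^\#_\mu)^{2-p} \lesssim \|F\|^p_{L^{2,p}(10Q^\#_\mu)}.
\]
Summing over $\mu$ and invoking the bounded intersection property \textbf{(K3)} for $\{10Q^\#_\mu\}$ bounds the full Keystone-indexed sum on the left by $\|F\|^p_{L^{2,p}(\R^2)} \approx \widehat{M}(f,L^\#)^p$.

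Next, for each $\nu$ I set $J_\nu := \tilde{L}^\#_{\mu(\nu)} - L^\#_{\mu(\nu)}$, pick $\theta_\nu \in C_c^\infty(B(x_\nu,\delta_\nu/150))$ with $\theta_\nu \equiv 1$ near $x_\nu$ and $|\partial^\alpha \theta_\nu| \lesssim \delta_\nu^{-|\alpha|}$, define $\bar{h}_\nu := \theta_\nu J_\nu$, and set $\tilde{F} := F + \sum_\nu \bar{h}_\nu$. Because $d(x_\nu,E) \geq \delta_\nu/5$, each $\bar{h}_\nu$ vanishes on $E$; because the supports $B(x_\nu,\delta_\nu/150)$ lie in the interiors of the pairwise disjoint $Q_\nu$'s and avoid every other $x_{\nu'}$, this yields $\tilde{F}|_E = f$ and $J_{x_\nu}\tilde{F} = L^\#_{\mu(\nu)} + J_\nu = \tilde{L}^\#_{\mu(\nu)}$, i.e.\ $J_{E'}\tilde{F} = \tilde{L}$. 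A direct computation gives $\|\bar{h}_\nu\|^p_{L^{2,p}(\R^2)} \lesssim |J_\nu(x_\nu)|^p \delta_\nu^{2-2p} + |\nabla J_\nu|^p \delta_\nu^{2-p}$, and the disjointness of supports yields
\[
\|\tilde{F}\|^p_{L^{2,p}(\R^2)} \lesssim \|F\|^p_{L^{2,p}(\R^2)} + \sum_\nu \|\bar{h}_\nu\|^p_{L^{2,p}(\R^2)}.
\]

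The main obstacle is to control $\sum_\nu \|\bar{h}_\nu\|^p_{L^{2,p}(\R^2)}$ by the Keystone-indexed quantity already established. Using the affine identity $J_\nu(x_\nu) = J_\nu(x^\#_{\mu(\nu)}) + \nabla J_\nu \cdot (x_\nu - x^\#_{\mu(\nu)})$ together with $|x_\nu - x^\#_{\mu(\nu)}| \lesssim \delta_\nu$ (from Lemma \ref{stdkeyprops}), each $\nu$-term is dominated by $|J_{\mu(\nu)}(x^\#_{\mu(\nu)})|^p \delta_\nu^{2-2p} + |\nabla J_{\mu(\nu)}|^p \delta_\nu^{2-p}$. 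Reorganizing the sum by $\mu$, it suffices to prove the geometric estimate $\sum_{\nu:\mu(\nu)=\mu} \delta_\nu^s \lesssim (\delta^\#_\mu)^s$ for $s = 2-2p$ and $s = 2-p$, both negative. This is exactly the geometric computation that drives Lemma \ref{globsobineq}: by \textbf{(K1)}, any $\nu$ with $\mu(\nu)=\mu$ satisfies $\delta_\nu \geq \delta^\#_\mu$ and $d(Q_\nu,Q^\#_\mu) \lesssim \delta_\nu$, while for each dyadic sidelength $\delta \geq \delta^\#_\mu$ only $O(1)$ disjoint CZ squares of that size can fit within distance $O(\delta)$ of $Q^\#_\mu$; geometric summation of $2^{js}$ with $s<0$ closes the bound and completes the proof.
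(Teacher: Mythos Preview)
Your proof is correct and uses the same key ingredients as the paper's: the membership $\tilde{L}^\#_\mu \in \Gamma_{10Q^\#_\mu}(\ldots)$, the sharp half of Lemma \ref{lem100} (enabled by \textbf{(K2)} and Lemma \ref{kap}), the bounded intersection property \textbf{(K3)}, and the geometric-series estimate $\sum_{\nu:\mu(\nu)=\mu}\delta_\nu^s \lesssim (\delta^\#_\mu)^s$ for $s<0$ from the proof of Lemma \ref{globsobineq}. The only difference is packaging: the paper manipulates the explicit $\widehat{M}$ formula from Proposition \ref{mainprop} via subadditivity of $\widehat{M}_\nu$ and $\widehat{M}_{10Q^\#_\mu}$, whereas you construct an explicit competitor $\tilde{F}$ by bump-function surgery (as in Lemma \ref{modlem}) and invoke $C$-optimality---these are equivalent routes to the same estimate.
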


\begin{proof}
Fix an arbitrary $L^\# \in Wh(E^\#)$. Using the formula for $\widehat{M}(f,\tilde{L}^\#)$ from Proposition \ref{mainprop}, we write
\begin{align}
\label{e119}
\widehat{M}(f,\tilde{L}^\#)^p \approx & \sum_{\nu} \widehat{M}_\nu(f_\nu,\tilde{L}_\nu)^p + \sum_{\nu \leftrightarrow \nu'} \left[ |\nabla \tilde{L}_\nu - \nabla \tilde{L}_{\nu'}|^p \delta^{2-p}_\nu + |\tilde{L}_\nu(x_\nu) - \tilde{L}_{\nu'}(x_\nu)|^p \delta^{2-2p}_\nu \right]  \\
& \lesssim \notag{}\\
& \sum_{\nu} \widehat{M}_\nu(f_\nu,L_\nu)^p + \sum_{\nu \leftrightarrow \nu'} \left[|\nabla L_\nu - \nabla L_{\nu'}|^p \delta^{2-p}_\nu + |L_\nu(x_\nu) - L_{\nu'}(x_\nu)|^p \delta^{2-2p}_\nu \right] \notag{}\\
& + \notag{}\\
& \sum_{\nu} \widehat{M}_\nu(0,\tilde{L}_\nu - L_\nu)^p +  \sum_{\nu \leftrightarrow \nu'}  \left[|\nabla \tilde{L}_\nu - \nabla L_\nu|^p \delta^{2-p}_\nu + |(\tilde{L}_\nu - L_\nu)(x_\nu)|^p \delta^{2-2p}_\nu \right]. \notag{}
\end{align}
In the above we have used the subadditivity of $\widehat{M}_\nu(\cdot,\cdot)$ and the following estimation: 
$$|(\tilde{L}_{\nu'} - L_{\nu'})(x_\nu)|^p \delta^{2-2p}_\nu \lesssim |(\tilde{L}_{\nu'} - L_{\nu'})(x_{\nu'})|^p \delta^{2-2p}_{\nu'} + |\nabla \tilde{L}_{\nu'} - \nabla L_{\nu'}|^p \delta^{2-p}_{\nu'},$$
for any $\nu \leftrightarrow \nu$. Through use of the formula for $\widehat{M}(f,L^\#)$, a collapse of double sums into single sums using the finite intersection property of the squares $\CZ$, and Lemma \ref{lem100},
\begin{align}
\label{e120}
\widehat{M}(f,\tilde{L}^\#)^p \lesssim \widehat{M}(f,L^\#)^p + \sum_{\nu} \left[ \widehat{M}_\nu(0,L_\nu - \tilde{L}_\nu) + |\nabla L_\nu - \nabla \tilde{L}_\nu|^p \delta_\nu^{2-p} + |L_\nu(x_\nu) - \tilde{L}_\nu(x_\nu)|^p \delta_\nu^{2-2p}\right] \\
\lesssim \widehat{M}(f,L^\#)^p + \sum_{\nu} \left[ |\nabla L_\nu - \nabla \tilde{L}_\nu|^p \delta_\nu^{2-p} + |L_\nu(x_\nu) - \tilde{L}_\nu(x_\nu)|^p \delta_\nu^{2-2p} \right]. \notag{}
\end{align}

Set $X = \sum_{\nu} \left[ |\nabla L_\nu - \nabla \tilde{L}_\nu|^p \delta_\nu^{2-p} + |L_\nu(x_\nu) - \tilde{L}_\nu(x_\nu)|^p \delta_\nu^{2-2p} \right]$. Once we prove that $X \lesssim \widehat{M}(f,L^\#)^p$, the proposition will follow. Using the fact that $L \in Wh(E')$ is the constant-path extension of $L^\# \in Wh(E^\#)$,
\begin{equation}
\label{e121}
X = \sum_{\mu}\sum_{\nu:\mu(\nu)=\mu} \left[ |\nabla L^\#_\mu - \nabla \tilde{L}^\#_\mu|^p \delta_\nu^{2-p} + |L^\#_\mu(x_\nu) - \tilde{L}^\#_\mu(x_\nu)|^p \delta_\nu^{2-2p} \right]
\end{equation}
Now, Lemma \ref{stdkeyprops} implies that $|x_\nu - x^\#_{\mu(\nu)}| \lesssim \delta_{\nu}$, and so for $\mu = \mu(\nu)$,
$$|L^\#_\mu(x_\nu) - \tilde{L}^\#_\mu(x_\nu)|^p \lesssim |L^\#_\mu(x^\#_\mu) - \tilde{L}^\#_\mu(x^\#_\mu)|^p + |\nabla L^\#_\mu - \nabla \tilde{L}^\#_\mu|^p \delta_\nu^p,$$
Upon plugging this into \eqref{e121},
\begin{equation}
X \lesssim \sum_{\mu}\left[ |\nabla L^\#_\mu - \nabla \tilde{L}^\#_\mu|^p \sum_{\nu:\mu(\nu)=\mu} \delta_\nu^{2-p} + |L^\#_\mu(x^\#_\mu) - \tilde{L}^\#_\mu(x^\#_\mu)|^p \sum_{\nu:\mu(\nu)=\mu} \delta_\nu^{2-2p} \right]
\label{xeq}
\end{equation}
Recall that $\sum_{\nu:\mu(\nu)=\mu} \delta_\nu^{-\epsilon}$ behaves like a geometric series for any $\epsilon > 0$. In fact, by a special case of \eqref{expsum} which can be found in Lemma \ref{globsobineq}, we have
\begin{equation}
\label{geomser}
\sum_{\nu: \mu(\nu) = \mu} \delta^{-\epsilon}_{\nu} \lesssim (\delta^\#_{\mu})^{-\epsilon}.
\end{equation}
Applying \eqref{geomser} for $\epsilon=p-2$, and $\epsilon = 2p-2$, we transform \eqref{xeq} into
\begin{equation}
\label{xeq2}
X \lesssim \sum_{\mu}\left[ |\nabla L^\#_\mu - \nabla \tilde{L}^\#_\mu|^p(\delta^\#_\mu)^{2-p} + |L^\#_\mu(x^\#_\mu) - \tilde{L}^\#_\mu(x^\#_\mu)|^p (\delta^\#_\mu)^{2-2p} \right].
\end{equation}
Recall that $10Q^\#_\mu$ satisfies \textbf{R}$(c_1(9/10)^{2/p-1},c_3(9/10)^{2/p-1},c_2)$ relative to $E^\#_\mu$, which by Lemma \ref{kap} implies that $\|E^\#_\mu\|_{\bes} \gtrsim \delta_{Q^\#_\mu}^{2/p-1}$ (with constant dependent on $c_1$, $c_2$, and $c_3$). Thus Lemma \ref{lem100} applies, and 
\begin{equation}
\label{xeq3}
X \lesssim \sum_{\mu} \widehat{M}_{10Q^\#_\mu}(0,L^\#_\mu - \tilde{L}^\#_\mu)^p \lesssim \sum_{\mu} \left[\widehat{M}_{10Q^\#_\mu}(f|_{E^\#_\mu},L^\#_\mu)^p + \widehat{M}_{10Q^\#_\mu}(f|_{E^\#_\mu}, \tilde{L}^\#_\mu)^p\right],
\end{equation}
with the last inequality following from subadditivity of $\widehat{M}_Q$. Since $\tilde{L}^\#_\mu \in \Gamma_{10Q^\#_\mu}(f|_{E^\#_\mu},x^\#_\mu,C\|f|_{E^\#_\mu}\|)$, we have that $\widehat{M}_{10Q^\#_\mu}(f|_{E^\#_\mu}, \tilde{L}^\#_\mu) \lesssim \widehat{M}_{10Q^\#_\mu}(f|_{E^\#_\mu}, L^\#_\mu)$. Thus, \eqref{xeq3} implies that
\begin{equation}
\label{xeq4}
X \lesssim \sum_{\mu} \widehat{M}_{10Q^\#_\mu}(f|_{E^\#_\mu}, L^\#_\mu)^p.
\end{equation}
Finally, from the properties of $\widehat{M}_{10Q^\#_\mu}$ and $\widehat{M}$ in Theorem \ref{localthm} and Proposition \ref{mainprop}, and the Bounded Intersection Property of $\{10Q^\#_\mu\}_{\mu=1}^{K^\#}$ (see \textbf{(K3)}),
$$X \lesssim \widehat{M}(f,L^\#)^p.$$
As we already noted, the lemma follows from this fact.
\end{proof}

\section{Proof of Theorem \ref{mainthm} when $\Lambda \neq \{Q^\circ\}$}
Consider a finite $E \subset \frac{1}{10} Q^\circ \subset \R^2$, and $f:E \rightarrow \R$. We construct CZ squares $\Lambda$, and Keystone squares $\Lambda^\# \subset \Lambda$. In this section, we suppose that $\Lambda \neq \emptyset$, so that Proposition \ref{iskey} is valid. As described in Section \ref{points}, we construct the various representative points: $E' = \{x_\nu\}^K_{\nu=1}$, and $E^\# = \{x^\#_\mu\}^{K^\#}_{\mu=1}$. Let $E^\#_\mu = 9Q^\#_\mu \cap E$, $E_\nu = E \cap 1.1Q_\nu$, and $f_\nu = f|_{E_\nu}$. From the previous section we found a linear jet-valued map $\tilde{L}^\#_\mu = T^\#_\mu(f|_{E^\#_\mu}) \in \Gamma_{10Q^\#}(\cdots)$ for each $\mu=1,2,\cdots,K^\#$, which by Lemma \ref{bestposslem} and Lemma \ref{minilem} satisfies
\begin{equation*}
\widehat{M}(f,\tilde{L}^\#) \lesssim \inf\{ \widehat{M}(f,L^\#) : L^\# \in Wh(E^\#)\} \approx \|f\|_{L^{2,p}(\R^2)|_E}.
\end{equation*}
Note that the reverse inequality follows trivially from the fact that $\widehat{M}(f,\tilde{L}^\#)$ is approximately the norm of an interpolant of $f$ (see Proposition \ref{mainprop}), and so
\begin{equation}
\label{e149}
\widehat{M}(f,\tilde{L}^\#) \approx \|f\|_{L^{2,p}(\R^2)|_E}.
\end{equation}
Let $\tilde{L}$ be the constant-path extension of $\tilde{L}^\#$. From Proposition \ref{mainprop} we find a linear operator $\widehat{T}:\WR|_E \times Wh(E^\#) \rightarrow \WR$ and a positive real number $\widehat{M}(\cdot,\cdot)$ which for any $L^\# \in Wh(E^\#)$ satisfy: $(1)$ $\widehat{T}(f,L^\#)|_E = f$, $(2)$ $J_{E'} \widehat{T}(f,L^\#) = L$, and $(3)$ $\|\widehat{T}(f,L^\#)\|_{\WR} \approx \widehat{M}(f,L^\#)$ is C-optimal with respect to these two properties.

Define $T(f) = \widehat{T}(f,\tilde{L}^\#)$. From \eqref{e149}, and $P1$-$3$ of $\widehat{T}$, we have $(1)$ $T(f)|_E = \widehat{T}(f,\tilde{L}^\#)|_E = f$, and $(2)$ $\|T(f)\|_{\WR} = \|\widehat{T}(f,\tilde{L}^\#)\|_{\WR} \approx \widehat{M}(f,\tilde{L}^\#) \approx \|f\|_{\WR|_E}$.

Let $\widehat{M}_\nu(f_\nu,\tilde{L}_\nu) = \widehat{M}_{\tilde{Q}_\nu}(f_\nu,\tilde{L}_\nu)$. Using the formula for $\widehat{M}(f,\tilde{L}^\#)$ from Proposition \ref{mainprop}, the definition of $\tilde{L}$ as the constant-path extension of $\tilde{L}^\#$, and the fact that $|x_\nu - x^\#_{\mu(\nu)}| \lesssim \delta_\nu$ (from Lemma \ref{stdkeyprops}),
\begin{align}
\label{normform1}
\widehat{M}(f,\tilde{L}^\#) & = \sum_{\nu} \widehat{M}_{\nu}(f_\nu,\tilde{L}_\nu)^p \\
& + \sum^{K^\#}_{\mu,\mu'=1} \sum_{\substack{\nu \leftrightarrow \nu' \\ \mu(\nu) = \mu, \; \mu(\nu') = \mu'}} \left[ |\nabla \tilde{L}^\#_\mu - \nabla \tilde{L}^\#_{\mu'}|^p \delta^{2-p}_\nu + |\tilde{L}^\#_\mu(x^\#_\mu) - \tilde{L}^\#_{\mu'}(x^\#_\mu)|^p \delta^{2-2p}_{\nu} \right] \notag{} \\
& \approx \sum_{\nu} \widehat{M}_{\nu}(f_\nu,\tilde{L}_\nu)^p + \sum^{K^\#}_{\mu,\mu'=1}\left[ |\nabla \tilde{L}^\#_\mu - \nabla \tilde{L}^\#_{\mu'}|^p \Delta^{2-p}_{\mu \mu'} + |\tilde{L}^\#_\mu(x^\#_\mu) - \tilde{L}^\#_{\mu'}(x^\#_\mu)|^p \Delta^{2-2p}_{\mu \mu'} \right]. \notag{}
\end{align}
Here, the last ``$\approx$" follows by taking a dyadic sum, and setting $\Delta_{\mu \mu'} = \min\{\delta_{\nu} : \exists \nu' \leftrightarrow \nu \; \mbox{with} \; \mu(\nu)=\mu, \; \mu(\nu') = \mu'\}$.
From Theorem \ref{localthm}, and the linear dependence of $\tilde{L}_\nu$ on $f$ for each $\nu \in \{1,2,\cdots,K\}$, we find linear functionals $\lambda^\nu_1(f),\cdots,\lambda^\nu_{N_\nu}(f)$ with $N_\nu \lesssim (\#E_\nu)^2$, and
$$\widehat{M}_\nu(f_\nu,\tilde{L}_\nu)^p \approx \sum_{i=1}^{N_\nu} |\lambda^\nu_i(f)|^p.$$
Along with \eqref{normform1}, this implies
\begin{equation}
\label{normform2}
\widehat{M}(f,\tilde{L}^\#) \approx \sum_{\nu}\sum_{i=1}^{N_\nu} |\lambda^\nu_i(f)|^p + \sum^{K^\#}_{\mu,\mu'=1}\left[ |\nabla \tilde{L}^\#_\mu - \nabla \tilde{L}^\#_{\mu'}|^p \Delta^{2-p}_{\mu \mu'} + |\tilde{L}^\#_\mu(x^\#_\mu) - \tilde{L}^\#_{\mu'}(x^\#_\mu)|^p \Delta^{2-2p}_{\mu \mu'} \right].
\end{equation}
Finally, define 
\begin{equation}
\label{normform3}
M(f) = \sum_{\nu}\sum_{i=1}^{N_\nu} |\lambda^\nu_i(f)|^p + \sum^{K^\#}_{\mu,\mu'=1}\left[ |\nabla \tilde{L}^\#_\mu - \nabla \tilde{L}^\#_{\mu'}|^p \Delta^{2-p}_{\mu \mu'} + |\tilde{L}^\#_\mu(x^\#_\mu) - \tilde{L}^\#_{\mu'}(x^\#_\mu)|^p \Delta^{2-2p}_{\mu \mu'} \right],
\end{equation}
and note from \eqref{e149} and \eqref{normform2} that $M(f) \approx \|f\|_{\WR|_E}$. Using the Bounded Intersection Property of $\{\tilde{Q}_\nu\}$, and the bound $N_\nu \lesssim (\#(E_\nu))^2$, there are at most $\sum_\nu C(\#(1.1Q_\nu \cap E))^2 \lesssim N^2$ linear functionals used in the first sum in \eqref{normform3}.

From \textbf{(K2)} and the definition of property \textbf{R}, any $Q^\# \in \Lambda^\#$ satisfies $E \cap 9Q^\# \neq \emptyset$. For each $Q^\#_\mu$ we assign a point $y^\#_\mu \in E \cap 9Q^\#$. Note that each $y \in E$ has a bounded preimage under this assignment, since $\{10Q^\#_\mu\}^{K^\#}_{\mu=1}$ satisfies the Bounded Intersection Property from \textbf{(K3)}. Thus, the Keystone squares $\Lambda^\# = \{Q^\#_\mu\}^{K^\#}_{\mu=1}$ satisfy $K^\# \lesssim N$; from this, the second sum in \eqref{normform3} contains at most $C N^2$ terms, each a linear functional of $f$ raised to the $p$'th power. 

The two arguments above show that $M(f)^p$ is a sum of at most $C N^2$ linear functionals raised to the $p$'th power, and completes the proof of Theorem \ref{mainthm} when $\Lambda \neq \{Q^\circ\}$.

\section{Proof of Theorem \ref{mainthm} when $\Lambda = \{Q^\circ\}$}
\label{lastcase}

In this case, $Q^\circ$ is OK, and thus $\|E\|_{\bes} \leq c_1 \delta_{Q^\circ}^{2/p-1}$. Since $E \subset \frac{1}{10}Q^\circ$, we may fix $x_0 \in 0.9 Q^\circ$ satisfying $d(x_0,E) \geq 1/10 \delta_{Q^\circ}$. It is clear that $Q^\circ$, $E$, and $x_0$ satisfy the Geometric Assumptions from Section \ref{LMEP} as long as $c_1$ is sufficiently small. 

Thus, by Theorem \ref{localthm}, there exists a linear operator $\widehat{T}_0:L^{2,p}(Q^\circ)|_E \times P \rightarrow L^{2,p}(Q^\circ)$ and a non-negative real number $\widehat{M}(\cdot,\cdot)$ so that for any $L_0 \in P$, $(1)$ $\widehat{T}_0(f,L_0)|_E=f$, $(2)$ $J_{x_0} \widehat{T}_0(f,L_0) = L_0$, $\|\widehat{T}_0(f,L_0)\|_{L^{2,p}(Q^\circ)}$ is C-optimal with respect to these two properties, and $(4)$ $\|\widehat{T}_0(f,L_0)\|_{L^{2,p}(Q^\circ)} \approx \widehat{M}(f,L_0)$ with $\widehat{M}(f,L_0)^p = \sum_{i=1}^{N_1} |\lambda_i(f,L_0)|^p$ and $N_1 \lesssim (\# E)^2$.

As in the remarks proceeding Lemma \ref{upperboundlem}, we can extend $\widehat{T}_0(f,L_0)$ to a function $\widehat{T}(f,L_0) \in \WR$ without increasing its norm by more than a constant factor, and without ruining $P1$ and $P2$ above. Using Claim 1 in the proof of Lemma \ref{keylem}, we may find $L_1 \in P$ depending linearly on $f$ with $\widehat{M}(f,L_1) \lesssim \widehat{M}(f,L_0)$ for all $L_0 \in P$. Let $T(f) = \widehat{T}(f,L_1)$ and $M(f) = \widehat{M}(f,L_1)$, which satisfy $(1)$ $T(f)|_E = f$, $(2)$ $\|T(f)\|_{\WR}$ is C-optimal with respect to this property, and $(3)$ $\|T(f)\|_{\WR} \approx M(f)$ with $M(f)^p = \sum_{i=1}^{N_1} |\lambda_i(f,L_1)|^p$. Since $N_1 \lesssim (\#E)^2 = N^2$, Theorem \ref{mainthm} is proven for this case as well.

\end{document}